 \newtheorem{rem}{Remark}[section]
 \newtheorem{theorem}{Theorem}[section]
 \newtheorem{defi}{Definition}[section]
 \newtheorem{lemma}{Lemma}[section]
 \newtheorem{proposition}{Proposition}[section]
 \newtheorem{corol}[proposition]{Corollary}
\newcommand{\intt}{\int\limits}   
\newcommand{\ms}{\sigma}
\newcommand{\mO}{\Omega} 
\newcommand{\N}{\mathbb{N}}
\newcommand{\R}{\mathbb{R}}
\newcommand{\E} {\mathbb{E}}
\newcommand{\bP}{\mathbb{P}}
\newcommand{\cA}{\mathcal{A}}
\newcommand{\cC}{\mathcal{C}}
\newcommand{\cF}{\mathcal{F}}
\newcommand{\ximud}{x_{i-\frac{1}{2}}}
\newcommand{\xipud}{x_{i+\frac{1}{2}}}
\newcommand{\be}{\begin{eqnarray}}
\newcommand{\ee}{\end{eqnarray}}
\newcommand{\beno}{\begin{eqnarray*}}
\newcommand{\eeno}{\end{eqnarray*}}
\newcommand{\barr}[1]{\begin{array}{#1}}
\newcommand{\earr}{\end{array}}
\newcommand{\VECT}[1]{\left(\barr{c} #1 \earr\right)}
\newcommand{\MAT}[2]{\left(\barr{#1} #2 \earr\right)}
\newcommand{\eps}{\varepsilon}
\newcommand{\dt}{{\Delta t}}
\newcommand{\sdt}{{\sqrt{\dt}}}
\newcommand{\sth}{\sqrt{\theta}}
\newcommand{\dx}{{\Delta x}}
\newcommand{\ma}{\alpha}
\newcommand{\mb}{\beta}
\newcommand{\mg}{\gamma}
\newcommand{\ml}{\lambda}
\newcommand{\mt}{\theta}
\newcommand{\disp}{\displaystyle}
\newcommand{\mS}{\Sigma}
\newcommand{\cO}{\mathcal O}
\newcommand{\cS}{\mathcal S}
\newcommand{\cE}{\mathcal E}
\newcommand{\cT}{\mathcal T}
\newcommand{\xmin}{x_{min}}
\newcommand{\xmax}{x_{max}}
\newcommand{\xumin}{x_{1,min}}
\newcommand{\xumax}{x_{1,max}}
\newcommand{\xdmin}{x_{2,min}}
\newcommand{\xdmax}{x_{2,max}}
\newcommand{\converge}{\rightarrow}
\newcommand{\ud}{\frac{1}{2}}
\newcommand{\wcT}{\widetilde\cT}
\definecolor{darkgreen}{rgb}{0,0.5,0}
\definecolor{purple}{rgb}{1,0,1}
\newcommand{\kibitz}[2]{\ifnum\Comments=1\textcolor{#1}{#2}\fi}
\newcommand{\Olivier}[1]{\kibitz{blue}     {\bf [Olivier: #1]}}
\newcommand{\comment}[1]{\kibitz{darkgreen}     {\centerline{\fbox{#1}} }}
\newcommand{\MODIF}[1]{{#1}}
\newcommand{\TOREMOVE}[1]{{\color{green} #1}}
\newcommand{\REMOVE}[1]{}
\renewcommand{\comment}[1]{}
\newcommand{\scriptdt}{{\scriptstyle\Delta t}}
\newcommand{\scriptmdt}{{\scriptstyle -\Delta t}}
\newcommand{\<}{\langle}
\renewcommand{\>}{\rangle}
\newcommand{\Extwo}    {Example 1}  
\newcommand{\Exfour}   {Example 2}  
\newcommand{\Exfive}   {Example 3}  
\newcommand{\Exsix}    {Example 4}  
\newcommand{\Exhei}    {Example 5}  
\newcommand{\Exsev}    {Example 6}  
\newcommand{\Exnin}    {Example 7}  
\newcommand{\Exten}    {Example 8}  
\newcommand{\FULL}[1]{} \newcommand{\SHORT}[1]{#1}  
\title[SLDG scheme for first- and second-order PDEs]
{Semi-Lagrangian discontinuous Galerkin schemes for some first- and second-order partial differential equations}
\author{Olivier Bokanowski}
\address{Laboratoire Jacques-Louis Lions,
Universit\'e Paris-Diderot (Paris 7),
75205 Paris CEDEX 13, France\\ 
and 
Unit\'e de Math\'ematiques Appliqu\'ees, ENSTA ParisTech, 91120 Palaiseau, France.}
\email{boka@math.jussieu.fr}
\author{Giorevinus Simarmata}
\address{Finance RI Department - Rabobank International, Europalaan 44, 3526 KS, Utrecht, The Netherlands}
\email{giorevinus.simarmata@rabobank.com}
\thanks{}
\keywords{semi-Lagrangian scheme, weak Taylor scheme, discontinuous Galerkin elements, method of characteristics, high-order methods, 
advection diffusion equations}
\begin{document}

\begin{abstract}
Explicit, unconditionally stable, high-order schemes for the approximation of some first- and 
second-order linear, time-dependent partial differential equations (PDEs) are proposed.
The schemes are based on a weak formulation of a semi-Lagrangian scheme using discontinuous Galerkin (DG) elements.
It follows the ideas of the recent works of Crouseilles, Mehrenberger and Vecil (2010), Rossmanith and Seal (2011),
for first-order equations, based on exact integration, quadrature rules, and splitting techniques for the treatment of two-dimensional
PDEs. For second-order PDEs the idea of the scheme 
is a blending between weak Taylor approximations and projection on a DG basis.
New and sharp error estimates are obtained for the fully discrete schemes and for variable coefficients.
In particular we obtain high-order schemes, unconditionally stable and convergent, 
in the case of linear first-order PDEs, or linear second-order PDEs with constant coefficients. 
In the case of non-constant coefficients, we construct, in some particular cases,
"almost" unconditionally stable second-order schemes and give precise convergence results.
The schemes are tested on several academic examples. 
\end{abstract}

\maketitle

\section{Introduction}

In this paper we consider equations of the form
\be\label{eq:main}
  u_t - \frac{1}{2}Tr (\sigma \sigma^T D^2 u) + b\cdot \nabla u + r u = 0, \quad x\in \Omega,\ t\in (0,T),
\ee
where $\Omega\subset \R^d$ is a box
(with some boundary conditions on $\partial \Omega$),  $\sigma$ (matrix), $b$ (vector) and $r$ (scalar) may be $x$-dependent,
at least Lipschitz continuous,
together with an initial condition
\be\label{eq:mainu0}
  u(0,x)=u_0(x), \quad x\in\Omega,
\ee
with $u_0\in L^2(\Omega)$.
The matrix $\sigma$ may be zero or positive semidefinite.
Unless otherwise stated,
we will in general assume periodic boundary conditions for \eqref{eq:main} 
in order to avoid difficulties on the boundary.
We will assume sufficient regularity on the data in order to have 
existence and uniqueness of weak solutions of \eqref{eq:main}-\eqref{eq:mainu0}, and so that
$t\converge u(t,.)$ is in $C^0([0,T], L^2(\Omega))$.

We study and propose new semi-Lagrangian Discontinuous Galerkin schemes, also abbreviated 
"SLDG" in this work, in order to approximate the solutions of \eqref{eq:main}-\eqref{eq:mainu0}.

The semi-Lagrangian (SL) approach (see~\cite{fal-fer-1998}, or the textbook~\cite{fal-fer-14}),
is based on the approximation of the "method of characteristics". 
By considering a weak formulation of this principle, an explicit SLDG scheme is obtained.
In the case of first-order PDEs with constant coefficient,  our approach is based on a similar method as in
the recent works of Crouseilles, Mehrenberger and Vecil~\cite{cro-meh-vec-2010} (for the Vlasov equation in plasma physics),
Rossmanith and Seal~\cite{ros-sea-2011}. However our approach seems not to have been considered for variable coefficients.
It is slightly different from the work of Qiu and Shu~\cite{Qiu_Shu_2011} (see also Restelli et al~\cite{Res_Bon_Sac_06}),
where first a weak formulation of the PDE is considered, and then quadrature formulae are used 
(see also \cite{richtmyer1967} for the original approach).
Here we will furthermore introduce new SLDG schemes
for second-order PDEs for which we prove stability and convergence results, and obtain
higher-orders of accuracy when possible.

First, in Section 2, 
we revisit the one-dimensional first-order advection equation with non-constant advection term $b(x)$
(case $\sigma=0$ in \eqref{eq:main}). We give a new unconditional stability result, and convergence proof,
extending similar results of \cite{cro-meh-vec-2010}, \cite{ros-sea-2011} (or \cite{Qiu_Shu_2011})
that was obtained for the case of a constant advection term.
The unconditional stability property 
can be interesting when compared to a standard DG approach where a restrictive
CFL condition 
must in general be considered~\cite{coc-2003}.

Based on the operator construction for first-order advection,
we then introduce, in Section 3, new schemes for linear second-order PDEs of type~\eqref{eq:main}, in the form of
explicit high-order SLDG schemes.
These schemes are based, for the temporal discretization, on the use of 
"weak Taylor approximations",  see in particular the review book by Kloeden and Platen \cite{klo-pla-95}
(see also Kushner \cite{kus-77} and the review book by Kushner and Dupuis~\cite{kus-dup-92},
Platen~\cite{Platen_1984}, Milstein~\cite{Milstein_1986}, 
Talay~\cite{Talay_1984}, Pardoux and Talay~\cite{Pardoux_Talay_1985},
Menaldi~\cite{Menaldi_1989}, Camilli and Falcone~\cite{Camilli_Falcone_95}, 
Milstein and Tretyakov~\cite{Milstein_Tretyakov_2001}, \cite{Debrabant_2010}).
Such approximations where used by R.~Ferretti in~\cite{fer-2010} 
as well as in Debrabant and Jakobsen~\cite{Debrabant_Jakobsen_2012}
in the context of semi-Lagrangian schemes, using interpolation methods for the space variable.
\MODIF{
The problem of coupling such approximations with a spatial grid approximation, in particular using a high-order interpolation method,
can be the stability and the convergence proof of the method. The $P_1$ interpolation is known to be $L^\infty$ stable,
but it is only second-order accurate in space (for regular data).
Some higher-order SL approximations have been proved to be stable (and convergent) for specific equations
and under large CFL numbers (see \cite{fer-02,car-fer-rus-05}), or for some advection equations when 
the SL scheme can be reinterpreted in a weak form (we refer in particular 
to Ferretti's work \cite{fer-2010,fer-2013}). 
}

The schemes of the present paper can be seen as projections of these approximation on a discontinous Galerkin basis.
We will in particular propose a second-order approximation (in time)
corresponding to a Platen's scheme~\cite[Chapter 14]{klo-pla-95}, but higher-order approximations (in time) could be obtained in the same way.
The scheme will be proved to be also high-order in space, stable and convergent under a weak CFL condition 
(of the form $\dx^4 \leq \ml \dt$ for some constant $\ml$, where $\dt$ and $\dx$ denote the time and mesh steps).

For the more simple case of second-order PDEs with constant coefficients,
we also propose explicit and unconditionally stable schemes,
high-order in space and up to third-order in time
(higher-order can be obtained~\cite{bok-bon-2013}).

In section 4 we consider extensions to some linear two-dimensional PDEs.
For first-order PDEs, we show how to combine the scheme with higher-order splitting techniques,
like Strang's splitting, but also Ruth's third-order splitting~\cite{Ruth_1983},
Forest's fourth-order splitting~\cite{Forest_1987} and Yoshida's sixth-order splitting~\cite{Yoshida_1990}
(see also \cite{Forest_Ruth_1990} and \cite{Yoshida_1993}). 
A splitting strategy to treat general second-order PDEs with constant coefficients is explained.
\MODIF{
The case of second-order PDEs with variable diffusion coefficients is discussed but only treated in some specific cases
(see Remark~\ref{rem:special_splittings} as well as \Exnin\ and~\Exten\ of Section~\ref{sec:num}).
The general case will be treated in a forthcoming work (see however Remark~\ref{rem:general_splittings}). 
}

Finally in Section 5 we show the relevance of our approach on several academic numerical examples 
in one and two dimensions (using Cartesian meshes),
including also a Black and Scholes PDE in mathematical finance.

The advantage of the proposed schemes is that they combine the DG framework 
which allows high-order spatial accuracy and the potential of degree adaptivity,
together with unconditional stability properties in the $L^2$ norm 
from the weak formulation of the semi-Lagrangian scheme. 

Note that our general strategy is to use a Cartesian grid, a particular one-dimensional advection scheme, 
and splitting techniques
(for more standard Discontinuous Galerkin approaches, see for instance \cite{coc-shu-07} or \cite{dipietro-ern-2012}).


Ongoing works using the current approach concern the construction of 
higher-order schemes for general second-order PDEs~\cite{bok-bon-2013}, extensions to
nonlinear PDEs arising from deterministic control~\cite{bok-che-shu-2013} or from stochastic control.

{\bf Acknowledgments.}
\textit{This work was partially supported by the EU under the 7th Framework Programme Marie Curie Initial
Training Network ``FP7-PEOPLE-2010-ITN'', SADCO project, GA number 264735-SADCO.
The first author also wishes to thank K. Debrabant for pointing out Platen's works
as well as an anonymous referee for related references, which helped to simplify the presentation.
We also thank D. Seal for useful comments and references. We are grateful to C.-W. Shu for pointing out problems 
in the preliminary version of the present work.
}

\section{Advection equation}

We first consider the semi-Lagrangian Discontinuous Galerkin scheme 
(SLDG for short) for the following one-dimensional first-order PDE, as in~\cite{cro-meh-vec-2010}
\begin{equation}\label{eq:adv}
  \begin{cases} 
    v_t + b(x) v_x = 0, \qquad (t,x) \in (0,T) \times \Omega\\
    v(0,x) = v_0(x), \qquad x \in \Omega 
  \end{cases}
\end{equation}
where $\mO= (x_{min}, x_{max})$, together with periodic boundary conditions on $\mO$.  


In order to simplify the presentation and the proofs, 
we will assume that $\Omega=(0,1)$ and that $b$ is a $1$-periodic function.

Let $y=y_x$ denote the solution of the differential equation
\be\label{eq:y_ode}
  \left\{ 
   \begin{array}{l} 
    \dot y(t)= b(y(t)), \quad t\in \R\\
    y(0)= x.
   \end{array}
   \right.
\ee
We will also assume that $b(\cdot)$ is Lipschitz continuous.

Let $N\in\N$, $N\geq 1$, $\dt = \frac{T}{N}$ a time step and $t_n = n\dt$ a time discretization.
Let $$ v^n(x):= v(t_n,x). $$

By the method of characteristics, the solution of~\eqref{eq:adv} satisfies
\begin{eqnarray}\label{eq:dpp}
  v^{n+1}(x)= v^n(y_x(-\dt)). 
\end{eqnarray}
Then we aim to obtain a fully discrete scheme.

Let us consider a space discretization that is considered uniform for the sake of simplicity of presentation.
Let $\dx = \frac{x_{max}-x_{min}}{M}$ for some integer $M\geq 1$,
$x_{i-\frac{1}{2}} := x_{min} + i\dx$, $\forall i = 0,.., M$, and $I_i := (\ximud, \xipud)$. 
Let $k \in \N$.
We define $V_k$ as the space of discontinuous-Galerkin elements on $\Omega$ with polynomials of degree~$k$, that is:
\begin{eqnarray}\label{eq:DGspace}
V_k&=&\{v \in L^2(\mO, \R) : v|_{I_i} \in P_k, \forall i=0, .., M-1\}
\end{eqnarray}
where $P_k$ denotes the set of polynomials of degree at most $k$.

\begin{rem}
In the classical semi-Lagrangian approach, looking for $u^n(x)$, an approximation of $v(t_n,x)$,
a first "direct" iterative scheme for \eqref{eq:dpp} would be
\begin{equation}
  u^{n+1}(x_i)=[u^n](y_{x_i}(-\dt))
\end{equation}
where $[u^n](x)$ denotes some interpolation of the function $u^n$ at point $x$. 
We could take for instance a set of $k+1$ values $(x^i_\ma)_{\ma=0,\dots,k}$ in each 
interval $I_i$, and define the new polynomial $u^{n+1}$ such that 
$u^{n+1}(x^i_\ma):=[u^n](x^i_\ma - b \dt)$ for all $\ma=0,\dots,k$.
However, given the discontinuities between the intervals ${I_i}$, 
this may lead to instabilities in the scheme (\cite{Qiu_Shu_2011}).
For instance, taking $x^i_\ma$ to be the Gauss quadrature points on each interval $I_i$ is in general unstable
(see Appendix~\ref{app:unstabilities}, see also \cite{morton1988}).
\end{rem}

Here we consider a Lagrange-Galerkin approach by taking the weak form of \eqref{eq:dpp}: 
for $n=0,\dots,N-1$, find $u^{n+1}\in V_k$ such that 
\be  \label{eq:LGex1}
  \intt_{\Omega}u^{n+1}(x) \varphi(x) dx 
   & = & \intt_{\Omega}u^n(y_x(-\scriptdt))\varphi(x) dx,\quad \forall \varphi \in V_k,
\ee
and for $n=0$, find $u^0 \in V_k$ such that:
\begin{eqnarray}\label{eq:LGex0}
  \intt_{\Omega}u^0(x) \varphi(x) dx = \intt_{\Omega}v_0(x) \varphi(x) dx,\quad \forall \varphi \in V_k. 
\end{eqnarray}
From now on, we rewrite~\eqref{eq:LGex1}
in the following abstract form : 
$$
  u^{n+1} = \cT_{b\dt} (u^n).
$$

In the case of a constant coefficient $b$, $y_x(-\dt)=x - b\dt$,
and $u^n(x-b\dt)$ is a piecewise constant polynomial.
The integral $\int_{I_i} u^n(x-b\dt) \varphi(x) dx$ will have in general two regular parts.  
Each part involves a polynomial of degree at most $2k$ and the Gaussian quadrature rule with $k+1$ points is applied 
and is exact. At this stage the method is the same as in~\cite{cro-meh-vec-2010}, or~\cite{ros-sea-2011}.
Hence the new function $u^{n+1}$ can be computed by solving exactly~\eqref{eq:LGex1}.


However, if $b(x)$ is not a constant, 
$x\converge u^n(y_x(-\scriptdt))$ is no more a piecewise polynomial.
Therefore the computing procedure for the right-hand-side (R.H.S.) of \eqref{eq:LGex1}
can no more be exact.

In order to obtain an implementable scheme, 
a precise ODE integration for the characteristics and a quadrature rule can be used.
We follow an approach 
very similar to \cite{Qiu_Shu_2011} for variable coefficients.
It consists in using Gaussian quadrature formula to approximate \eqref{eq:LGex1} 
in regions where the involved functions are smooth. 

\begin{rem}
Indeed, in \cite{Qiu_Shu_2011}, an other SLDG scheme is presented, but our form is equivalent to one form of SLDG 
as explained in \cite[Proposition 4.5]{Qiu_Shu_2011}. This may lead to different programming algorithms however.
\end{rem}

\subsection{Preliminaries}
Let $\{x_\alpha\}_{\alpha=0,..,k}$ be the set of Gauss points in the interval $(-1,1)$,
with its corresponding weights $\{w_\ma\}_{\ma=0,..,k}$  ($w_\ma>0$), such that:
\begin{align}
 \forall p \in P_{2k+1}, \quad 
 \intt_{-1}^1 p(x) dx   =   \sum_{\alpha=0}^k w_\alpha p(x_\alpha).
\end{align}
In particular, we get on the interval $I_i$, 
\be
  \forall p \in P_{2k+1}, \quad 
  \intt_{\ximud}^{\xipud} p(x) dx =  \sum_{\alpha=0}^k w_\alpha^i p(x_\ma^i),
\ee
where $x_\alpha^i := x_i + x_\alpha \dx \equiv \ximud + \frac{1}{2}(1+x_\alpha) \dx$  and $w_\alpha^i := \frac{\dx}{2}w_\alpha$.

To each set of Gauss points $\{x_\alpha^i\}_{\alpha=0,..,k}$ in $I_i$, 
we can associate the corresponding Lagrange polynomials (dual basis)
$\{\varphi_\alpha^i\}_{\alpha=0,.., k}$ defined by 
\begin{eqnarray}\label{eq:base}
  \varphi_{\alpha}^i(x) := 1_{I_i}(x)\ 
  \disp \prod_{\substack{0\le \beta \le k \\ \beta \neq \alpha}} \frac{x-x_\beta}{x_\alpha-x_\beta}.
\end{eqnarray}
For any $u^n \in V_k$, there exist coefficients
$(u_{\alpha,i}^n)_{\alpha=0,..,k}^{i=0,..,M-1} \in \mathbb{R}$ such that:
\begin{eqnarray}\label{eq:base2}
u^{n}(x) = \sum_{i=0}^{M-1} \sum_{\alpha=0}^k u_{\alpha,i}^{n} \varphi_\alpha^i (x).
\end{eqnarray}
In particular, the left-hand side of \eqref{eq:LGex1} for $\varphi=\varphi_\ma^i$ becomes
\begin{eqnarray}\label{eq:lLGex}
\intt_{\Omega} u^{n+1}(x) \varphi_\alpha^i (x) dx &=& \intt_{I_i} u^{n+1}(x) \varphi_\alpha^i (x) dx 
 =  u_{\alpha,i}^{n+1} w_\alpha^i \nonumber.
\end{eqnarray}


\medskip\noindent
\subsection{Definition of the scheme in the general case}
Due to the discontinuities of~$u^n$, we separate the right-hand side of \eqref{eq:LGex1} 
into several integral parts involving only regular functions:
the R.H.S.\ of~\eqref{eq:LGex1} is 
approximated by the Gaussian quadrature rule on each sub-interval where $u^n(y_x(-\dt))$ is a regular function.

For a given mesh cell $I_i$,
we first consider the points
$(x_{i,q})_{1\leq q\leq p_i}$ (in finite number) of the interval $(\ximud,\xipud)$,
such that for $1\leq q\leq p_i$,  $y_{x_{i,q}}(-\dt)=x_{\ell_{i,q}-\frac{1}{2}}$ for some 
$\ell_{i,q}\in \mathbb{Z}$, 
and $x_{i,0}:=\ximud$, $x_{i,p_i+1}:=\xipud$ (see Figure~\ref{fig:deform}).
Then we apply the Gaussian quadrature rule on each interval $J_{i,q}=(x_{i,q},x_{i,q+1})$
and obtain the following quadrature rule, for any polynomial $\varphi\in V_k$:
\be\label{eq:LGex_nonconstant_quadrature}
  \intt_{I_i}u^n(y_x(-\scriptdt))\varphi(x) dx 
  & =  & \sum_{q=0}^{p_i} \intt_{x_{i,q}}^{x_{i,q+1}} u^n(y_x(-\scriptdt))\varphi(x) dx\\
  & \simeq & \sum_{q=0}^{p_i} \sum_{\ma=0}^k  \tilde w^i_{q,\ma} u^n(y_{\tilde x^i_{q,\ma}}(-\scriptdt))\varphi(\tilde x^i_{q,\ma}),
\ee
with $\tilde w^i_{q,\ma}:=\frac{w_\ma}{2} (x_{i,q+1}-x_{i,q})$ and 
$\tilde x^i_{q,\ma} := 
x_{i,q}+\frac{1}{2}(1+x_\ma)(x_{i,q+1}-x_{i,q})  \equiv \frac{x_{i,q}+x_{i,q+1}}{2} +
x_\ma (\frac{x_{i,q+1}-x_{i,q}}{2})$.

\medskip\noindent{\underline{Definition of the scheme (operator $\wcT_{b,\dt}$):}
$u^{n+1}$ is the unique element of $V_k$ satisfying for all $\varphi\in V_k$, 
\be\label{eq:LGex_nonconstant_scheme}
  \intt_\mO u^{n+1}(x)\varphi(x) dx 
  & = &  
  \sum_{i=0}^{M-1} \sum_{q=0}^{p_i} \sum_{\ma=0}^k  
  \tilde w^i_{q,\ma} u^n\big(y_{\tilde x^i_{q,\ma}}(-\scriptdt)\big)\varphi(\tilde x^i_{q,\ma}).
\ee
The scheme is made explicit by using formula~\eqref{eq:LGex_nonconstant_scheme} on each $\varphi=\varphi^j_\mb$.
The scheme equivalently defines an operator $\wcT_{b,\dt}$ such that 
$$
  u^{n+1} =  {\wcT}_{b,\dt} \ u^n .
$$
In particular, if $b$ is constant, then $\wcT_{b,\dt} = \cT_{b,\dt}$, and this is no more true if $b$ is non-constant.
\begin{defi}
For further analysis, let us introduce the following scalar product on $V_k$ 
(where the index "G" stands for the use of the Gaussian quadrature rule):
\be \label{eq:defi_bilin_G}
  (\varphi,\psi)_{G} : = \sum_{i=0}^{M-1} \sum_{q=0}^{p_i} \sum_{\ma=0}^k  
  \tilde w^i_{q,\ma}\ \varphi(\tilde x^i_{q,\ma})\ \psi(\tilde x^i_{q,\ma}).
\ee
\end{defi}
Then the scheme \eqref{eq:LGex_nonconstant_scheme}
is equivalently defined by
$$
  (u^{n+1}, \varphi) = (u^n(y_\cdot(-\dt)), \varphi)_G,\ \  \forall \varphi \in V_{k}.
$$

\begin{figure}
\hspace*{-0.5cm}
\begin{picture}(0,0)%
\includegraphics{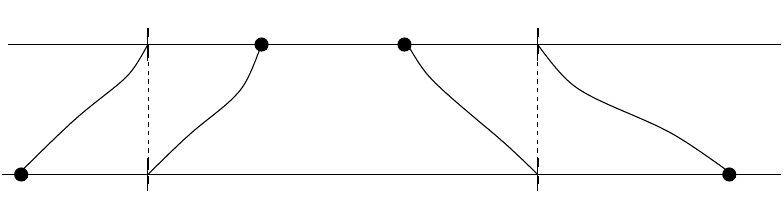}%
\end{picture}%
\setlength{\unitlength}{2279sp}%
\begingroup\makeatletter\ifx\SetFigFont\undefined%
\gdef\SetFigFont#1#2#3#4#5{%
  \reset@font\fontsize{#1}{#2pt}%
  \fontfamily{#3}\fontseries{#4}\fontshape{#5}%
  \selectfont}%
\fi\endgroup%
\begin{picture}(10844,3022)(-21,-2924)
\put(3421,-151){\makebox(0,0)[lb]{\smash{{\SetFigFont{7}{8.4}{\rmdefault}{\mddefault}{\updefault}{\color[rgb]{0,0,0}$x_{i,1}$}%
}}}}
\put(5311,-151){\makebox(0,0)[lb]{\smash{{\SetFigFont{7}{8.4}{\rmdefault}{\mddefault}{\updefault}{\color[rgb]{0,0,0}$x_{i,2}$}%
}}}}
\put(1711,-2851){\makebox(0,0)[lb]{\smash{{\SetFigFont{7}{8.4}{\rmdefault}{\mddefault}{\updefault}{\color[rgb]{0,0,0}$y_{x_{i,1}}(-\dt)$}%
}}}}
\put(6841,-2851){\makebox(0,0)[lb]{\smash{{\SetFigFont{7}{8.4}{\rmdefault}{\mddefault}{\updefault}{\color[rgb]{0,0,0}$y_{x_{i,2}}(-\dt)$}%
}}}}
\put(1531,-61){\makebox(0,0)[lb]{\smash{{\SetFigFont{7}{8.4}{\rmdefault}{\mddefault}{\updefault}{\color[rgb]{0,0,0}$x_{i,0}:=x_{i-1/2}$}%
}}}}
\put(6931,-61){\makebox(0,0)[lb]{\smash{{\SetFigFont{7}{8.4}{\rmdefault}{\mddefault}{\updefault}{\color[rgb]{0,0,0}$x_{i,3}:=x_{i+1/2}$}%
}}}}
\put(9271,-2851){\makebox(0,0)[lb]{\smash{{\SetFigFont{7}{8.4}{\rmdefault}{\mddefault}{\updefault}{\color[rgb]{0,0,0}$y_{x_{i+1/2}}(-\dt)$}%
}}}}
\put(  1,-2851){\makebox(0,0)[lb]{\smash{{\SetFigFont{7}{8.4}{\rmdefault}{\mddefault}{\updefault}{\color[rgb]{0,0,0}$y_{x_{i-1/2}}(-\dt)$}%
}}}}
\end{picture}%
%
\caption{\label{fig:deform} Determination of the point of discontinuity of the data.}
\end{figure}
\subsection{Stability and error estimate for constant drift coefficient}\label{sec:2.1b}

The weak form \eqref{eq:LGex1} gives the stability of the scheme in the $L^2$ norm, at least in the case when $b=const$.
Indeed, taking $\varphi=u^{n+1}$ in~\eqref{eq:LGex1} we get
$$ \| u^{n+1} \|_{L^2}^2 = (u^n(\cdot -b \dt),\ u^{n+1}) \leq \|u^n( \cdot  - b\dt) \|_{L^2}\, \|u^{n+1}\|_{L^2}, $$
where $\|\cdot\|_{L^2}$ denotes the $L^2$ norm on $\mO$ and $(\, . ,\,  .)$ is the associated scalar product. 
Then, by the periodic boundary condition, $\|u^n( \cdot  - b\dt) \|_{L^2}=\|u^{n}\|_{L^2}$ 
and therefore
\be
  \label{eq:firstbound}
  \| u^{n+1} \|_{L^2} \leq  \|u^{n}\|_{L^2}.
\ee
This proof works only for $b$ constant, however.

For any $w\in L^2$, we denote its projection on $V_k$ by $\Pi w$, corresponding to the unique element of $V_k$ such that 
\be
  \| w - \Pi w \|_{L^2} = \inf_{f \in V_k} \|w - f\|_{L^2}.
\ee

\begin{rem}
The function $u^{n+1}$ defined by~\eqref{eq:LGex1} corresponds to the projection of the function 
$x\converge u^n(y_x(-\dt))$ on the space $V_k$:
$$u^{n+1} = \Pi (u^n(y_{\cdot}(-\dt)),$$
and, in the same way, we have $u^0=\Pi v_0$.
\end{rem}

\FULL{
Hence, the previous stability property of the scheme is in fact a consequence of the $L^2$ norm stability of 
the projection, $\|\Pi w \|_{L^2} \leq \| w\|_{L^2}$,
and of the stability of the advection ($\| u^n(\cdot - b\dt) \|_{L^2} \leq \| u^n \|_{L^2}$).
}

We now recall a simple estimate for the $L^2$ projection on $V_k$. 

\begin{lemma}[Projection error]\label{lem:proj}
Let $k\geq 0$ and $\ell\leq k$. If $w\in \cC^{\ell+1}$, then 
 $$ \| w - \Pi w \|_{L^2} \leq |\mO|^{1/2} C_{\ell}(w)\ \dx^{\ell+1}$$
where $C_\ell(w) := \frac{1}{2^{\ell+1} (\ell+1)!}  \| w^{(\ell+1)}\|_{L^\infty}$. 
\end{lemma}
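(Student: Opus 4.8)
The plan is to reduce the global $L^2$-projection error to a cell-by-cell estimate and then control each local error by a standard polynomial-approximation argument. The key observation is that $V_k$ is defined cell-wise, so the best $L^2$-approximation decouples across the intervals $I_i$: since the spaces $P_k|_{I_i}$ are $L^2$-orthogonal (functions supported on disjoint $I_i$ have vanishing cross products), one has
\be
  \| w - \Pi w \|_{L^2(\mO)}^2 = \sum_{i=0}^{M-1} \| w - \Pi w \|_{L^2(I_i)}^2
  = \sum_{i=0}^{M-1} \inf_{p\in P_k} \| w - p \|_{L^2(I_i)}^2,
\ee
because the restriction of $\Pi w$ to $I_i$ is exactly the local best approximation of $w$ by a degree-$k$ polynomial on $I_i$. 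Thus it suffices to bound $\inf_{p\in P_k}\|w-p\|_{L^2(I_i)}$ on a single interval of length $\dx$.

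On a fixed cell $I_i$, I would take $p$ to be the degree-$\ell$ Taylor polynomial of $w$ expanded about the midpoint $x_i$ (allowed since $\ell\le k$, so $p\in P_k$). The Taylor remainder with the integral (or Lagrange) form gives the pointwise bound
\be
  |w(x)-p(x)| \le \frac{\|w^{(\ell+1)}\|_{L^\infty}}{(\ell+1)!}\, |x-x_i|^{\ell+1}
  \le \frac{\|w^{(\ell+1)}\|_{L^\infty}}{(\ell+1)!}\Big(\frac{\dx}{2}\Big)^{\ell+1}
\ee
for $x\in I_i$, using $|x-x_i|\le \dx/2$. Squaring and integrating over $I_i$ (of measure $\dx$) yields
\be
  \inf_{p\in P_k}\| w - p \|_{L^2(I_i)}^2 \le \dx\,\Big(\frac{1}{2^{\ell+1}(\ell+1)!}\Big)^2 \|w^{(\ell+1)}\|_{L^\infty}^2\,\dx^{2(\ell+1)}.
\ee

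Summing this over the $M$ cells and using $M\dx = |\mO|$ gives $\sum_i \dx^{2\ell+3} = |\mO|\,\dx^{2\ell+2}$, so that
\be
  \| w - \Pi w \|_{L^2(\mO)}^2 \le |\mO|\, C_\ell(w)^2\, \dx^{2(\ell+1)},
\ee
with $C_\ell(w)=\frac{1}{2^{\ell+1}(\ell+1)!}\|w^{(\ell+1)}\|_{L^\infty}$; taking square roots delivers the claim. There is no serious obstacle here: the only point requiring a little care is the orthogonal decomposition across cells, which is what lets us pass from the global infimum to local ones — and the fact that the local best approximation is bounded by the \emph{particular} Taylor-polynomial choice (the infimum is only smaller). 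The stated constant comes out exactly from the $(\dx/2)^{\ell+1}$ midpoint bound, so no optimization of $p$ is needed.
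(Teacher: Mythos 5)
Your proposal is correct and follows essentially the same route as the paper's proof: compare $\Pi w$ with the piecewise degree-$\ell$ Taylor polynomial of $w$ centered at the cell midpoints, use the best-approximation property of the $L^2$ projection, and bound the Taylor remainder by $C_\ell(w)\,\dx^{\ell+1}$ in $L^\infty$. The only cosmetic difference is that you sum the local errors cell by cell (via the orthogonal decomposition of $V_k$), whereas the paper bounds the global remainder at once through $\|R\|_{L^2}\leq |\mO|^{1/2}\|R\|_{L^\infty}$; both yield the same constant.
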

\begin{proof}
Let us write $w=P + R $ where $P$
is the element of $V_k$ corresponding, on each interval $I_i$, to the Taylor expansion of $w$ centered at $x_i$ and of degree $\ell$.
We have $\|w- \Pi w \|_{L^2} \leq \| w - P\|_{L^2} = \| R\|_{L^2}\leq |\Omega|^{1/2} \|R\|_{L^\infty}$. By the definition of $R$ and usual Taylor estimates, we have 
$\|R\|_{L^\infty} \leq C_\ell \dx^{\ell+1}$. 
\end{proof}

Let $v^n(x):=v(t_n,x)$ where $v$ denotes the exact solution of \eqref{eq:adv}.
Using the $L^2$-stability of the projection, it is straightforward to show that 
$ \|u^{n+1} - \Pi v^{n+1} \|_{L^2} = \| \Pi (u^n(\cdot - b\dt) - v^n (\cdot - b\dt)\|_{L^2} \leq \| u^n - v^n \|_{L^2}$, 
therefore we have 
$$
  \|u^{n+1} - v^{n+1} \|_{L^2} \leq \| u^n - v^n \|_{L^2} + \| v^{n+1} - \Pi v^{n+1}\|_{L^2}.
$$
By using Lemma \ref{lem:proj}, this leads to the following known convergence result \cite{Qiu_Shu_2011}.

\begin{theorem} \label{th:th0}
Let $k\geq 0$ and $b$ be a constant. 
Assume the initial condition  $v_0$ is $1$-periodic and in $\cC^{k+1}$.
Then, the following estimate holds:
\be \label{eq:th0}
  \| u^{n} - v^{n} \|_{L^2} \leq  \|u^0-v^0\|_{L^2}  +   C T \frac{\dx^{k+1}}{\dt}, 
  \quad \forall n\leq N,
\ee
where the constant $C$ depends only of $|\mO|$ and $k$.
\end{theorem}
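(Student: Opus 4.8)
The plan is to telescope the per-step error estimate that was already established just before the theorem. Recall that the text derived the one-step inequality
$$
  \| u^{n+1} - v^{n+1} \|_{L^2} \leq \| u^n - v^n \|_{L^2} + \| v^{n+1} - \Pi v^{n+1}\|_{L^2},
$$
so the natural route is to iterate this from the initial time down to $n$ and control the accumulated projection errors. First I would sum the inequality over consecutive steps, obtaining
$$
  \| u^{n} - v^{n} \|_{L^2} \leq \| u^0 - v^0 \|_{L^2} + \sum_{m=1}^{n} \| v^{m} - \Pi v^{m}\|_{L^2}.
$$
The only work remaining is then to bound each projection error uniformly in $m$.

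Next I would invoke Lemma~\ref{lem:proj} with $\ell=k$, applied to the exact solution $v^m = v(t_m,\cdot)$. Since $b$ is constant, the exact solution of \eqref{eq:adv} is simply the rigid transport $v(t,x)=v_0(x-bt)$, so $v^m$ inherits the $\cC^{k+1}$ regularity of $v_0$ and, crucially, $\| (v^m)^{(k+1)}\|_{L^\infty} = \| v_0^{(k+1)}\|_{L^\infty}$ is independent of $m$. Hence $C_k(v^m)=C_k(v_0)$ is constant in $m$, and the lemma gives
$$
  \| v^{m} - \Pi v^{m}\|_{L^2} \leq |\mO|^{1/2}\, C_k(v_0)\, \dx^{k+1}
$$
for every $m$, with a single $m$-independent right-hand side.

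Finally I would combine the two displays: the sum over $m=1,\dots,n$ contributes at most $n$ copies of the uniform bound, and since $n\leq N = T/\dt$ we have $n\,\dx^{k+1} \leq N\,\dx^{k+1} = T\,\dx^{k+1}/\dt$. Setting $C := |\mO|^{1/2} C_k(v_0)$ (which depends only on $|\mO|$, $k$, and the data through $v_0$) yields
$$
  \| u^{n} - v^{n} \|_{L^2} \leq \| u^0 - v^0 \|_{L^2} + C\,T\,\frac{\dx^{k+1}}{\dt},
$$
which is exactly \eqref{eq:th0}. I do not expect a genuine obstacle here: the scheme is exact in time for constant $b$ (the operator $\cT_{b,\dt}$ reproduces the rigid shift composed with projection), so there is no temporal truncation error, and the entire error is projection error accumulated over $N$ steps. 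The one point demanding a little care is the uniform-in-$m$ control of $\| v^m - \Pi v^m\|_{L^2}$, which rests precisely on the observation that rigid transport preserves the $\cC^{k+1}$ seminorm; this is what makes the constant $C$ independent of $n$ and $\dt$.
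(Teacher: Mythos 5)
Your proof is correct and follows essentially the same route as the paper: the one-step inequality obtained from projection stability, Lemma~\ref{lem:proj} applied with $\ell=k$ (using that rigid transport by constant $b$ preserves the $\cC^{k+1}$ bound of $v_0$), and summation over $n\leq N=T/\dt$ steps. Your explicit observation that $C_k(v^m)=C_k(v_0)$ is uniform in $m$ is exactly the point the paper leaves implicit, so nothing is missing.
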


\begin{rem}
By taking $\dt=\dx$ this leads to an error estimate in $O(\dx^k)$. However the examples (such as in Example 1) 
will show a numerical behavior in $O(\dx^{k+1})$ (as already remarked also in \cite{Qiu_Shu_2011}). 
We refer to the recent work in~\cite{ste-meh-bou-2013} for more insight about this gap.
\end{rem}

\if{
\proof[Proof of Theorem~\ref{th:th0}]
Let us remark that if $u^n$ is defined in $V_k$ by using the Gaussian quadrature formula (such that \eqref{eq:LGex1}
holds for any $\varphi \in V_k$), then it also holds for any $\varphi \in V_{k+1}$: 
\begin{eqnarray}\label{eq:LGexk1}
  \intt_{\Omega}u^{n+1}(x) \varphi(x) dx = \intt_{\Omega}u^n(x - b\dt) \varphi(x) dx,\quad \forall \varphi \in V_{k+1}.
\end{eqnarray}
Indeed, the Gaussian quadrature rule holds for polynomials  of degree $2k+1$. Hence 
the right hand side of~\eqref{eq:LGexk1} corresponds also for $\varphi \in V_{k+1}$ to the result of the Gaussian quadrature rule (and this is similar
for the left hand side of~\eqref{eq:LGexk1}). 

Now, let $\tilde v^{n+1} := \Pi_{V_{k+1}} v^{n+1}$, the projection on $V_{k+1}$. Then it holds
\beno
  \| u^{n+1} - \tilde v^{n+1} \|_{L^2}^2  
  & = & ( u^{n+1}, u^{n+1} - \tilde  v^{n+1} )  -  (\tilde v^{n+1}, u^{n+1} - \tilde v^{n+1}) \\
  & = & ( u^n(\cdot - b \dt), u^{n+1} - \tilde v^{n+1} ) -  (\tilde v^{n+1}, u^{n+1} -   \tilde v^{n+1})  \\
  & = & ( u^n(\cdot - b \dt) - \tilde v^{n+1},\  u^{n+1} - \tilde v^{n+1}) \\
  & \leq & \| u^n(\cdot - b \dt) - \tilde v^{n+1} \|_{L^2}  \| u^{n+1} - \tilde v^{n+1} \|_{L^2}.
\eeno
and therefore 
$$  \| u^{n+1} - \tilde v^{n+1} \|_{L^2} \leq \| u^n(\cdot - b \dt) - \tilde v^{n+1} \|_{L^2}.  $$
Since $\|v^{n+1} - \tilde v^{n+1} \|_{L^2} \leq C \dx^{k+2}$ and $v^{n+1}= v^n(\cdot - b\dt)$,
\beno
  \| u^{n+1} - v^{n+1} \|_{L^2} 
     & \leq &  \| u^n(\cdot - b\dt)  -  v^{n}(\cdot - b\dt) \|_{L^2}  +  2 C \dx^{k+2} \\
     & \leq &  \| u^n -  v^{n}\|_{L^2}  +  2 C \dx^{k+2}
\eeno
and the result follows by induction.
\endproof
}\fi


\if{
REMOVE: 

The solution of~\eqref{eq:adv} satisfies
$$ v(t_{n+1},x)= v(t_n, y_x(-\dt)). $$

Hence, a scheme definition should be:
$\forall n=0,\dots,N-1$, find $u^{n+1}\in V_k$ such that
\be  
  \intt_{\Omega}u^{n+1}(x) \varphi(x) dx 
   &  = &  \intt_{\Omega}u^n(y_x(-\scriptdt))\varphi(x) dx,\quad \forall \varphi \in V_k,
\ee
or, equivalently, $u^{n+1} = \cT_{b,\dt} u^n$ where
$$ \cT_{b,\dt} u := \Pi \big( u(y_{\cdot}(-\dt)) \big). $$
However, the computing procedure for the R.H.S.\ can no more be exact, because
$x\converge u^n(y_x(-\scriptdt))$ is no more a piecewise polynomial.

END REMOVE
}\fi

\if{
\medskip

In the case when $v$ is a regular data, we have the following estimate
\begin{lemma} 
Let $k\geq 0$, $b, v\in \cC^{\ell+1}$ for some $\ell\leq k+2$, with $b,v$ 1-periodic. 
Then
$$ \| \wcT_{b,\dt} v - \cT_{b,\dt} v \|_{L^2} \leq C \dx^{\ell+1},$$
for some constant $C\geq 0$ that depends only of $(\|b^{(i)}\|_{L^\infty},\,\|v^{(i)}\|_{L^\infty})_{1\leq i\leq \ell+1}$.
\end{lemma}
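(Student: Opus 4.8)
The plan is to write $w := \wcT_{b,\dt} v - \cT_{b,\dt} v \in V_k$ and to recognize $\|w\|_{L^2}^2$ as a Gaussian quadrature error, then bound that error cell by cell. Set $g(x) := v(y_x(-\dt))$. Since $b\in\cC^{\ell+1}$, standard ODE regularity makes the backward flow $x\mapsto y_x(-\dt)$ a $1$-periodic $\cC^{\ell+1}$ map whose derivatives are controlled by $(\|b^{(i)}\|_{L^\infty})_{i\le \ell+1}$ and $T$; by the chain rule (Fa\`a di Bruno) $g\in\cC^{\ell+1}$ with $\|g^{(\ell+1)}\|_{L^\infty}$ bounded in terms of $(\|b^{(i)}\|_{L^\infty},\|v^{(i)}\|_{L^\infty})_{i\le\ell+1}$. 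By definition $\cT_{b,\dt}v=\Pi g$, while $\wcT_{b,\dt}v$ is the element of $V_k$ with $(\wcT_{b,\dt}v,\varphi)=(g,\varphi)_G$ for all $\varphi\in V_k$. Subtracting, $(w,\varphi)=(g,\varphi)_G-\intt_\mO g\varphi\,dx=:E(g\varphi)$ for every $\varphi\in V_k$, where $E$ collects the quadrature errors $E_{i,q}(f):=\intt_{J_{i,q}}f\,dx-\sum_\ma \tilde w^i_{q,\ma}f(\tilde x^i_{q,\ma})$ over the subintervals $J_{i,q}$. Taking $\varphi=w$ gives $\|w\|_{L^2}^2=E(gw)=\sum_{i,q}E_{i,q}(gw)$.

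The core is a local estimate on each $J_{i,q}$, of length $h_{i,q}:=x_{i,q+1}-x_{i,q}\le\dx$. First I would treat the case $\ell\le k+1$. Let $p$ be the degree-$\ell$ Taylor polynomial of $g$ at the midpoint of $J_{i,q}$. Since $w|_{J_{i,q}}\in P_k$, the product $p\,w$ has degree $\le \ell+k\le 2k+1$, so the $(k+1)$-point Gauss rule integrates it exactly, $E_{i,q}(p\,w)=0$, and hence $E_{i,q}(gw)=E_{i,q}(\rho w)$ with $\rho:=g-p$ and $\|\rho\|_{L^\infty(J_{i,q})}\le C\,h_{i,q}^{\ell+1}\|g^{(\ell+1)}\|_{L^\infty}\le C\dx^{\ell+1}$ (as in Lemma~\ref{lem:proj}). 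Bounding the integral and the quadrature sum in $E_{i,q}(\rho w)$ by Cauchy--Schwarz, using $\sum_\ma\tilde w^i_{q,\ma}=h_{i,q}$ and the fact that the Gauss rule is \emph{exact} for $w^2\in P_{2k}$ (so that $\sum_\ma\tilde w^i_{q,\ma}w(\tilde x^i_{q,\ma})^2=\|w\|_{L^2(J_{i,q})}^2$), yields $|E_{i,q}(gw)|\le C\dx^{\ell+1}h_{i,q}^{1/2}\|w\|_{L^2(J_{i,q})}$. Summing over all cells and subintervals and applying Cauchy--Schwarz with $\sum_{i,q}h_{i,q}=|\mO|$ gives $|E(gw)|\le C\dx^{\ell+1}|\mO|^{1/2}\|w\|_{L^2}$, and therefore $\|w\|_{L^2}\le C\dx^{\ell+1}$. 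This settles $\ell\le k+1$ cleanly, with the stated constant dependence.

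The hard part is the endpoint $\ell=k+2$. There the Taylor step forces $\deg p=k+2$, so $p\,w$ can reach degree $2k+2$, \emph{one above} the Gauss exactness $2k+1$, and $E_{i,q}(p\,w)$ no longer vanishes. The natural remedy is to peel off only the top mode: writing $p=p_{k+1}+\frac{g^{(k+2)}(x_c)}{(k+2)!}(x-x_c)^{k+2}$, the term $p_{k+1}w$ is integrated exactly, and after rescaling $J_{i,q}$ to $(-1,1)$ the remaining contribution is $\frac{g^{(k+2)}(x_c)}{(k+2)!}$ times the quadrature error of $s^{k+2}\hat w$, which is a fixed multiple of the degree-$(2k+2)$ Legendre coefficient of $s^{k+2}\hat w$, i.e. of the leading coefficient of $\hat w$. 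This produces a local bound $|E_{i,q}(gw)|\le C\,h_{i,q}^{k+3}\|w\|_{L^\infty(J_{i,q})}$. However, converting $\|w\|_{L^\infty}$ back to the $L^2$ norm costs a cellwise inverse inequality and a further factor in passing to the global $\ell^2$ sum over the $M=|\mO|/\dx$ cells, a total loss of $\dx^{-1}$, which cancels exactly one power and leaves only $\|w\|_{L^2}\le C\dx^{k+2}$. I expect this order to be genuinely sharp in general: already for $k=0$ (midpoint rule) a direct computation gives $w|_{I_i}=-\frac{1}{24\dx}\sum_q h_{i,q}^3\,g''(\xi_{i,q})$, of true size $\dx^2=\dx^{\ell}$ whenever a cell carries a single subinterval. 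Thus the honest output of this method is $\|w\|_{L^2}\le C\dx^{\min(\ell,k+1)+1}$; obtaining the full $\dx^{\ell+1}$ at $\ell=k+2$ is the delicate point and would seem to require either cancellation among adjacent subintervals or a CFL-type control on the number and sizes of the $J_{i,q}$, which is the main obstacle to the statement as written.
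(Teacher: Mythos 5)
For $\ell\leq k+1$ your argument is essentially the paper's own proof: there too one sets $a(x):=v(y_x(-\dt))$, notes $(\cT_{b,\dt}v,\varphi)=(a,\varphi)$ and $(\wcT_{b,\dt}v,\varphi)=(a,\varphi)_G$, splits $a=P+R$ on each $J_{i,q}$ with $P$ the degree-$\ell$ Taylor polynomial and $\|R\|_{L^\infty(J_{i,q})}\leq C\dx_{i,q}^{\ell+1}$, uses exactness of the $(k+1)$-point Gauss rule for $P\varphi$, bounds the leftover by $C\dx^{\ell+1}\|\varphi\|_{L^1(J_{i,q})}$, sums over $(i,q)$, and concludes by taking $\varphi=\wcT_{b,\dt}v-\cT_{b,\dt}v$. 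Your bookkeeping through Cauchy--Schwarz and exactness of the rule for $w^2\in P_{2k}$ is an equivalent way to reach $\|w\|_{L^2}\leq C\dx^{\ell+1}$ (only a harmless slip: your $E$ and $E_{i,q}$ carry opposite sign conventions).

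Your endpoint analysis is the valuable part, and it is correct: the paper's proof silently asserts $d^o(P\varphi)\leq \ell+k\leq 2k+1$, which is true only for $\ell\leq k+1$; at $\ell=k+2$ the product reaches degree $2k+2$, one beyond the exactness of the rule, which is exactly the obstruction you isolate, and your estimate of the surviving top-mode term correctly yields only $O(\dx^{k+2})$. Your $k=0$ midpoint computation, $w|_{I_i}=-\frac{1}{24\dx}\sum_q h_{i,q}^3\,g''(\xi_{i,q})$, which is of genuine size $\dx^2=\dx^{\ell}$ whenever $g''\not\equiv 0$, shows the stated rate at $\ell=k+2$ is actually false, not merely unprovable by this method — no CFL restriction or cancellation can rescue it, since that example has a single subinterval per cell. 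So the honest conclusion $\|w\|_{L^2}\leq C\dx^{\min(\ell,k+1)+1}$ is the right one, and the hypothesis in the statement should read $\ell\leq k+1$. This is consistent with how the result is used: the text immediately following the lemma only invokes $\ell=k+1$ (error $O(\dx^{k+2})$), and the remark after Proposition~\ref{prop:2.1} likewise claims at best an improvement of the $\dx^{k+1}$ bounds to $O(M_{2k+1}(\psi)\,\dx^{k+2})$, never to $\dx^{k+3}$.
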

\proof
Let $a(x):= v(y_x(-\dt))$, which is $\cC^{\ell+1}$ regular.
Notice that 
\beno 
 (\cT_{b,\dt} v, \varphi)_{L^2} & = &  (\Pi a, \varphi)_{L^2} \ = \ (a,\varphi)_{L^2} \  =\  \sum_{i=0}^{M-1} \sum_{q=0}^{p_i} \int_{J_{i,q}} a(x) \varphi(x) dx.
\eeno
Furthermore, by the scheme definition, 
\beno 
  (\wcT_{b,\dt} v, \varphi)_{L^2} 
  & = &
  \sum_{i=0}^{M-1} \sum_{q=0}^{p_i} \sum_{\ma=0}^k  
  \tilde w^i_{q,\ma} v\big(y_{\tilde x^i_{q,\ma}}(-\dt)\big)\varphi(\tilde x^i_{q,\ma})\\
  & = &
  \sum_{i=0}^{M-1} \sum_{q=0}^{p_i} \sum_{\ma=0}^k  
  \tilde w^i_{q,\ma} a(\tilde x^i_{q,\ma}) \varphi(\tilde x^i_{q,\ma})
\eeno
We can write $a= P + R$ where $P\in P_\ell$ is the Taylor expansion of $w$ around the middle point of $J_{i,q}$, and the rest $R$ satisfies
$\|R\|_{L^\infty(J_{i,q})}\leq C \dx_{i,q}^{\ell+1}$. 
Then, for any $\varphi\in V_k$, the degree $ d^o(P\varphi)$ is lower or equal to $\ell+k \leq 2k+1$,
and the Gaussian quadrature rule is exact for $P \varphi$ on $J_{i,q}$. The reminding error
is of order 
$
  \int_{J_{i,q}} \|R\|_{L^\infty} |\varphi(x)| dx \leq C \dx_{i,q}^{\ell+1} \|\varphi\|_{L^1(J_{i,q})} 
  \leq C \dx^{\ell+1} \|\varphi\|_{L^1(J_{i,q})}.
$
Summing up these bounds, we find a global error or order 
\beno
  \bigg| (\wcT_{b,\dt} v, \varphi)_{L^2}  - (\cT_{b,\dt} v, \varphi)_{L^2}   \bigg|
  & \leq & 
  \sum_{i=0}^{M-1} \sum_{q=0}^{p_i}  C \dx^{\ell+1} \|\varphi\|_{L^1(J_{i,q})}\\
  & \leq  &  C\dx^{\ell+1} \| \varphi \|_{L^1} \ \leq \  C\dx^{\ell+1} \| \varphi \|_{L^2}.
\eeno
Since both $\wcT_{b,\dt} v$ and $\cT_{b,\dt}$ belongs to $V_k$, 
this proves the desired result (by taking $\varphi:=\wcT_{b,\dt} v - \cT_{b,\dt}$).
\endproof

In particular taking $\ell=k+1$ in the previous Lemma gives an error of order $O(\dx^{k+2})$.

We have also the following estimate:
\begin{lemma}
If $u \in L^\infty$, then 
$$ 
  \| \wcT_{b,\dt} u \|_{L^2} \leq \| u \|_{L^\infty}.
$$
\end{lemma}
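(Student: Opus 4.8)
The plan is to test the defining identity of the scheme against its own output. Writing $w:=\wcT_{b,\dt}u\in V_k$, the characterization $(w,\varphi)=(u(y_\cdot(-\dt)),\varphi)_G$ for all $\varphi\in V_k$ gives, upon choosing $\varphi=w$,
$$ \|w\|_{L^2}^2 = (u(y_\cdot(-\dt)),\,w)_G = \sum_{i=0}^{M-1}\sum_{q=0}^{p_i}\sum_{\ma=0}^k \tilde w^i_{q,\ma}\, u\big(y_{\tilde x^i_{q,\ma}}(-\dt)\big)\, w(\tilde x^i_{q,\ma}). $$
First I would observe that, since all weights $\tilde w^i_{q,\ma}=\frac{w_\ma}{2}(x_{i,q+1}-x_{i,q})$ are strictly positive, the right-hand side is a positive semidefinite bilinear pairing of the two families of sampled values, so the Cauchy--Schwarz inequality applies:
$$ \|w\|_{L^2}^2 \;\leq\; \Big(\sum_{i,q,\ma}\tilde w^i_{q,\ma}\,\big|u(y_{\tilde x^i_{q,\ma}}(-\dt))\big|^2\Big)^{1/2}\Big(\sum_{i,q,\ma}\tilde w^i_{q,\ma}\,|w(\tilde x^i_{q,\ma})|^2\Big)^{1/2}. $$

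The two factors are then estimated separately. For the first factor, I would bound each sampled value of $u$ by $\|u\|_{L^\infty}$ and use that the quadrature integrates constants exactly, namely $\sum_{\ma}\tilde w^i_{q,\ma}=x_{i,q+1}-x_{i,q}=|J_{i,q}|$, so that $\sum_{i,q,\ma}\tilde w^i_{q,\ma}=|\mO|$; this yields the bound $|\mO|^{1/2}\|u\|_{L^\infty}$ for the first factor.

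The main point is the identity $\sum_{i,q,\ma}\tilde w^i_{q,\ma}|w(\tilde x^i_{q,\ma})|^2=\|w\|_{L^2}^2$ for the second factor. The key observation is that each subinterval $J_{i,q}=(x_{i,q},x_{i,q+1})$ lies inside a single mesh cell $I_i$, on which $w\in V_k$ is a single polynomial of degree at most $k$; hence $w^2$ restricted to $J_{i,q}$ is a polynomial of degree at most $2k\leq 2k+1$, and the $(k+1)$-point Gaussian rule on $J_{i,q}$ is exact, giving $\sum_{\ma}\tilde w^i_{q,\ma}|w(\tilde x^i_{q,\ma})|^2=\int_{J_{i,q}}|w|^2\,dx$. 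Since the nodes $x_{i,0}=\ximud,\dots,x_{i,p_i+1}=\xipud$ partition $I_i$ and the cells $I_i$ partition $\mO$, summing over $q$ and then $i$ reconstitutes $\int_\mO|w|^2\,dx=\|w\|_{L^2}^2$.

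Combining the two factors gives $\|w\|_{L^2}^2\leq |\mO|^{1/2}\|u\|_{L^\infty}\,\|w\|_{L^2}$, hence $\|w\|_{L^2}\leq|\mO|^{1/2}\|u\|_{L^\infty}$, which is the stated bound since $\mO=(0,1)$ has $|\mO|=1$. I expect the only delicate point to be this exactness step: one must use that on each quadrature subinterval $w$ is a polynomial of degree $k$ (not $2k$), so that $|w|^2$ stays within the range where the $(k+1)$-point Gauss rule is exact, and that the subintervals $J_{i,q}$ exactly tile $\mO$ so no part of the $L^2$ mass is lost.
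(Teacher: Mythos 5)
Your proof is correct and follows essentially the same route as the paper's: test the scheme's defining identity against $\varphi=\wcT_{b,\dt}u$, apply Cauchy--Schwarz to the positively weighted quadrature sum, use that $\sum_{i,q,\ma}\tilde w^i_{q,\ma}=|\mO|=1$, and invoke exactness of the $(k+1)$-point Gauss rule for $\varphi^2\in P_{2k}$ on each $J_{i,q}$. The only difference is cosmetic --- you apply Cauchy--Schwarz to the pair $(u(y_\cdot(-\dt)),w)$ and bound the $u$-factor afterwards, whereas the paper bounds $|u|$ by $\|u\|_{L^\infty}$ first and then pairs the constant $1$ against $|\varphi|$ --- and you rightly flag the two load-bearing points (degree $2k$, not $2k+1$, for $w^2$, and the exact tiling of $\mO$ by the $J_{i,q}$).
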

\proof
For any $\varphi \in V_k$, $\wcT_{b,\dt} u$ is an element of $V_k$ such that 
\be
  & & \hspace{-2cm}  \intt_\mO (\wcT_{b,\dt} u) (x) \varphi(x) dx
   = \sum_{i=0}^{M-1} \sum_{q=0}^{p_i} \sum_{\ma=0}^k  
  \tilde w^i_{q,\ma} u\big(y_{\tilde x^i_{q,\ma}}(-\scriptdt)\big)\varphi(\tilde x^i_{q,\ma}) \\
  & \leq  & 
    \|u\|_{L^\infty} \sum_{i=0}^{M-1} \sum_{q=0}^{p_i} \sum_{\ma=0}^k  
    \tilde w^i_{q,\ma} |\varphi(\tilde x^i_{q,\ma})|\\
  & \leq  & 
    \|u\|_{L^\infty} 
      \bigg(\sum_{i=0}^{M-1} \sum_{q=0}^{p_i} \sum_{\ma=0}^k  \tilde w^i_{q,\ma} \varphi(\tilde x^i_{q,\ma})^2 \bigg)^\ud
       = \|u\|_{L^\infty}  \|\varphi\|_{L^2}
\ee
where we have made use of the Cauchy-Schwarz inequality, the fact that $\sum_{i,q,\ma} \tilde w^i_{q,\ma}=1$ and that $\varphi^2\in P_{2k}$ so that the 
Gaussian quadrature rule is exact for $\varphi^2$.
Taking $\varphi= \wcT_{b,\dt}u $ gives the result.
\endproof

}\fi

\subsection{Non-constant $b$: preliminary results}

For $u\in V_k$, the following approximation result is central. It controls the error between the desired 
formula~\eqref{eq:LGex1} and
the implementable scheme \eqref{eq:LGex_nonconstant_scheme}.

\begin{proposition}[{\bf Gauss quadrature errors}]\label{prop:2.1}
Let $k\geq 0$ and let $b$ be of class $\cC^{2k+2}$ and $1$-periodic. Then: \\
$(i)$ For all $u\in V_k$,
\be 
  \label{eq:bound_prop21.b}
  &  & \bigg| (u(y_\cdot(-\dt)),\varphi)_G -  (u(y_\cdot(-\dt)),\varphi)  \bigg| 
    \leq  
    C \dt \dx^2 \| u \|_{L^2} \| \varphi \|_{L^2},
  \quad \forall \varphi \in V_{k}.  \nonumber 
\ee
where $C\geq 0$ is a constant.
In particular, we have, in the $L^2$-norm:
\be  
  \wcT_{b,\dt} u^n \equiv  u^{n+1} = \cT_{b,\dt} u^n +  O(\dt \dx^2 \| u^n \|_{L^2}), \quad \forall n\geq 0.
\ee
$(ii)$ 
For all $u\in V_k$, for any $\psi$ in $\cC^{k+1}$, 1-periodic,
\be 
  &  & \hspace{-1cm}
    \bigg| (u(y_\cdot(-\dt)) - \psi(y_\cdot(-\dt)),\varphi)_G -  (u(y_\cdot(-\dt)) - \psi(y_\cdot(-\dt)),\varphi)  \bigg|  \nonumber \\
  &  & 
    \hspace{0cm} \leq  
    C  \dt \dx^2 \| u - \psi \|_{L^2} \| \varphi \|_{L^2}
    + C M_{k+1}(\psi) \dx^{k+1}  \| \varphi \|_{L^2},
  \quad \forall \varphi \in V_{k},
   \label{eq:bound_prop21.2b} 
\ee
where $C\geq 0$ is a constant which depends only of $k$, and 
\be 
  M_{p}(\psi):=\max_{0\leq r\leq p} \| \psi^{(r)}\|_{L^\infty}.
\ee
$(iii)$ 
For any regular $\psi \in \cC^{k+1}$, for any $\varphi\in V_{k}$, 
\be 
  \label{eq:reg_psi}
  (\psi, \varphi)_G  =  (\psi,\varphi) + O(M_{k+1}(\psi) \dx^{k+1} \|\varphi \|_{L^2}).
\ee
$(iv)$
Furthermore, $\exists C\geq 0$, for any $\psi \in \cC^{k+1}$, $1$-periodic,
\be
   \| \wcT_{b,\dt} \psi - \cT_{b,\dt}\psi \|_{L^2} \leq C M_{k+1}(\psi) \dx^{k+1}.
   \label{eq:bound_prop21.4b} 
\ee

\end{proposition}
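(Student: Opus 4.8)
The whole proposition reduces to estimating a single object, the local Gauss quadrature error on a sub-interval $J_{i,q}=(x_{i,q},x_{i,q+1})$,
\[
  E_{i,q}(f):=\sum_{\ma=0}^k \tilde w^i_{q,\ma}\,f(\tilde x^i_{q,\ma})-\int_{J_{i,q}} f(x)\,dx,
\]
since by the definition of $(\cdot,\cdot)_G$ and of the $L^2$ product one has $(g,\varphi)_G-(g,\varphi)=\sum_{i=0}^{M-1}\sum_{q=0}^{p_i}E_{i,q}(g\varphi)$. Two elementary facts are used throughout. First, the $(k+1)$-point Gauss rule is exact on $J_{i,q}$ for all polynomials of degree $\le 2k+1$, so $E_{i,q}(f)=E_{i,q}(f-Q)$ for every $Q\in P_{2k+1}$, and since the weights are positive with $\sum_\ma\tilde w^i_{q,\ma}=|J_{i,q}|$ one gets $|E_{i,q}(f-Q)|\le 2|J_{i,q}|\,\|f-Q\|_{L^\infty(J_{i,q})}$. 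Second, writing $\Phi:=y_\cdot(-\dt)$, the points $x_{i,q}$ are chosen precisely so that $\Phi$ maps each $J_{i,q}$ into a single cell $I_{\ell_{i,q}}$; as $\Phi$ is a $1$-periodic $\cC^{2k+2}$ diffeomorphism of $\Omega$ with $\Phi'=1+O(\dt)$, the images $\Phi(J_{i,q})$ partition $\Omega$, and for fixed $\ell$ those with $\ell_{i,q}=\ell$ partition $I_\ell$, so $\sum_{(i,q):\,\ell_{i,q}=\ell}|J_{i,q}|\le C\dx$. I will use this tiling property to pass from local to global bounds.

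I would prove $(iii)$ first. On $J_{i,q}$ take $Q=T_{i,q}\varphi$, where $T_{i,q}\in P_k$ is the degree-$k$ Taylor polynomial of $\psi$ at the midpoint; then $T_{i,q}\varphi\in P_{2k}$ is integrated exactly, $E_{i,q}(\psi\varphi)=E_{i,q}((\psi-T_{i,q})\varphi)$, and $\|\psi-T_{i,q}\|_{L^\infty(J_{i,q})}\le C M_{k+1}(\psi)|J_{i,q}|^{k+1}$. Hence $|E_{i,q}(\psi\varphi)|\le C M_{k+1}(\psi)|J_{i,q}|^{k+2}\|\varphi\|_{L^\infty(I_i)}$; bounding $|J_{i,q}|^{k+2}\le \dx^{k+1}|J_{i,q}|$, summing over $q$, and applying the inverse inequality $\|\varphi\|_{L^\infty(I_i)}\le C\dx^{-1/2}\|\varphi\|_{L^2(I_i)}$ with Cauchy--Schwarz over $i$ (recalling $M=1/\dx$) yields $(iii)$. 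Statement $(iv)$ is then immediate: by the scheme definitions $((\wcT_{b,\dt}-\cT_{b,\dt})\psi,\varphi)=(\psi\circ\Phi,\varphi)_G-(\psi\circ\Phi,\varphi)$, so applying $(iii)$ to the $\cC^{k+1}$ function $\psi\circ\Phi$ (for which $M_{k+1}(\psi\circ\Phi)\le C M_{k+1}(\psi)$ by the chain rule and $\Phi'=1+O(\dt)$) and then taking $\varphi=(\wcT_{b,\dt}-\cT_{b,\dt})\psi\in V_k$ gives the $L^2$ bound.

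The core is $(i)$, where $f=g\varphi$ with $g=P_\ell\circ\Phi$, $P_\ell:=u|_{I_\ell}\in P_k$, is smooth but not polynomial. Matching $f$ by its degree-$(2k+1)$ Taylor polynomial $Q_{i,q}$ gives $|E_{i,q}(f)|\le 2|J_{i,q}|\,\|f-Q_{i,q}\|_{L^\infty}\le C|J_{i,q}|^{2k+3}\|f^{(2k+2)}\|_{L^\infty(J_{i,q})}$. The key point is that $f^{(2k+2)}=O(\dt)$: by Leibniz, since $\deg\varphi\le k$, only the terms $(P_\ell\circ\Phi)^{(j)}\varphi^{(2k+2-j)}$ with $j\ge k+2$ survive, and by Fa\`a di Bruno every $(P_\ell\circ\Phi)^{(j)}$ with $j\ge k+1$ must contain a factor $\Phi^{(m)}$ with $m\ge2$ (a partition of $j>r$ into $r\le k$ blocks has a block of size $\ge2$); since $\Phi^{(m)}=O(\dt)$ for $m\ge2$, each such term is $O(\dt)$. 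Tracking the mesh scaling through $\|P_\ell^{(r)}\|_\infty\le C\dx^{-r}\|P_\ell\|_\infty$ ($r\le k$) and $\|\varphi^{(s)}\|_\infty\le C\dx^{-s}\|\varphi\|_\infty$, the worst case $j=2k+2$ yields $\|f^{(2k+2)}\|_{L^\infty(J_{i,q})}\le C\dt\,\dx^{-k}\|P_\ell\|_{L^\infty(I_\ell)}\|\varphi\|_{L^\infty(I_i)}$ (the highest $b$-derivative needed being the $\cC^{2k+2}$ one). Thus $|E_{i,q}(f)|\le C\dt\,\dx^{-k}|J_{i,q}|^{2k+3}\|P_\ell\|_\infty\|\varphi\|_\infty$; bounding $|J_{i,q}|^{2k+3}\le\dx^{2k+2}|J_{i,q}|$, using the tiling property together with $\|P_\ell\|_{L^\infty(I_\ell)}\le C\dx^{-1/2}\|u\|_{L^2(I_\ell)}$, $\|\varphi\|_{L^\infty(I_i)}\le C\dx^{-1/2}\|\varphi\|_{L^2(I_i)}$, and Cauchy--Schwarz gives the global bound $C\dt\,\dx^{k+2}\|u\|_{L^2}\|\varphi\|_{L^2}\le C\dt\,\dx^{2}\|u\|_{L^2}\|\varphi\|_{L^2}$, which is $(i)$; the operator form follows by taking $\varphi=\wcT_{b,\dt}u-\cT_{b,\dt}u$.

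Finally, $(ii)$ follows by writing $u-\psi=(u-\Pi\psi)+(\Pi\psi-\psi)$. Applying $(i)$ to the $V_k$-element $u-\Pi\psi$ produces $C\dt\,\dx^2\|u-\Pi\psi\|_{L^2}\|\varphi\|_{L^2}$, and $\|u-\Pi\psi\|_{L^2}\le\|u-\psi\|_{L^2}+\|\psi-\Pi\psi\|_{L^2}$ with the second term $O(M_{k+1}(\psi)\dx^{k+1})$ by Lemma~\ref{lem:proj} (the induced $\dt\,\dx^{k+3}$ contribution being of higher order); the smooth remainder $(\Pi\psi-\psi)\circ\Phi$ is controlled by inserting $\|\psi-\Pi\psi\|_{L^\infty}\le C M_{k+1}(\psi)\dx^{k+1}$ into the crude bound for $E_{i,q}$, exactly as in $(iii)$. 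The main obstacle is the mesh-bookkeeping in $(i)$: one must balance the inverse-inequality blow-up $\dx^{-k}$ of the high-order derivatives against the smallness $|J_{i,q}|^{2k+3}$ and the $O(\dt)$ gain from $\Phi$ being affine to leading order, and sum correctly via the tiling of the characteristic images. Crucially, a naive degree-$2k$ comparison polynomial only gives $O(\dt\,\dx)$, so the full degree-$(2k+1)$ match (equivalently the Peano form of the Gauss error) is what produces the sharp $\dt\,\dx^2$ factor.
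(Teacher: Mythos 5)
Your architecture is essentially the paper's. Part (iv) is obtained from (iii) applied to the composed function $\psi(y_\cdot(-\dt))$, exactly as in the paper; and the core of (i) rests on the same mechanism the paper isolates in Lemma~\ref{lem:23} and Lemma~\ref{lem:estim_2}: the Gauss error on each $J_{i,q}$ is controlled by $|J_{i,q}|^{2k+3}\,\|[u(y)\varphi]^{(2k+2)}\|_{L^\infty(J_{i,q})}$, Leibniz with $\varphi\in P_k$ forces the $u(y)$-factor to carry at least $k+2$ derivatives, Fa\`a di Bruno then forces some factor $y^{(m)}$ with $m\ge 2$ (a partition of $j\ge k+2$ into at most $k$ blocks has a block of size $\ge 2$), hence an $O(\dt)$ gain, and one concludes by inverse estimates and Cauchy--Schwarz over the tiles. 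The genuine differences are cosmetic: you prove (iii) directly and first (the paper instead notes (iii) follows from (ii) and proves (i) then (ii)); you scale with $L^\infty$ inverse inequalities on the full cells together with the tiling property $\sum_{q}|J_{i,q}|=\dx$ and $\sum_{(i,q):\,\ell_{i,q}=\ell}|J_{i,q}|\le C\dx$, whereas the paper uses $L^2$-based inverse estimates directly on each $J_{i,q}$ and on $y(J_{i,q})$; and in (ii) you subtract $\Pi\psi$ where the paper subtracts the piecewise Taylor polynomial $P$ of $\psi$ on the $J_{i,q}$ --- in both cases the smooth remainder is killed by the same crude bound $C\|R\|_{L^\infty}\|\varphi\|_{L^2}$.

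One step in your (i) is miscounted, although the final bound survives. You claim the worst Leibniz term is $j=2k+2$, yielding $\|f^{(2k+2)}\|_{L^\infty(J_{i,q})}\le C\dt\,\dx^{-k}\|P_\ell\|_{L^\infty}\|\varphi\|_{L^\infty}$ and hence a global bound $C\dt\,\dx^{k+2}\|u\|_{L^2}\|\varphi\|_{L^2}$. This is false in general: the term $j=k+2$ paired with $\varphi^{(k)}$ carries $\dx^{-p}\cdot\dx^{-k}$ with $p$ as large as $k$, i.e.\ $\dx^{-2k}$; more precisely each Fa\`a di Bruno class scales as $\dt^{m}\dx^{\,m-(2k+2)}$ with $m=j-p\ge 2$, so your uniform bound $\dt\,\dx^{-k}$ would require a restriction of the type $\dt\lesssim\dx^{k}$, which is not assumed. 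With a single power of $\dt$ extracted the correct uniform estimate is $C\dt\,\dx^{-2k}\|P_\ell\|_{L^\infty}\|\varphi\|_{L^\infty}$; rerunning your own tiling bookkeeping with this value gives $C\dt\,\dx\,|J_{i,q}|$ per tile and hence exactly the global bound $C\dt\,\dx^{2}\|u\|_{L^2}\|\varphi\|_{L^2}$ of the proposition --- the same exponent the paper obtains from $\dx_{i,q}^{2k+3}\,\dx_{i,q}^{-(k+1/2)}\,\dx_{i,q}^{-(k+1/2)}=\dx_{i,q}^{2}$. So the purported improvement to $\dt\,\dx^{k+2}$ should simply be deleted; everything else stands.
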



\begin{rem}
Some assumptions can be weakened, for instance $(i)$ and $(ii)$ are still valid using that $b^{(2k+1)}$ is in $L^\infty$, then in the error bounds 
\eqref{eq:bound_prop21.b} and \eqref{eq:bound_prop21.2b} 
the $\dt \dx^2$ term should be replaced by $\dt\dx$.  However these bounds will be used in Section 3 and the form 
\eqref{eq:bound_prop21.b} and \eqref{eq:bound_prop21.2b} is preferred.
Also, it is possible to prove that the error term $O(M_{k+1}(\psi)\dx^{k+1})$ in $(ii)$, $(iii)$ and $(iv)$ can be improved to
$O(M_{2k+1}(\psi) \dx^{k+2})$ provided that $\psi \in \cC^{2k+1}$.
\end{rem}

\medskip\noindent
{\em Proof of Proposition~\ref{prop:2.1}}\label{app:b_nonconstant_bound}


Notice that 
the estimates of $(i)$ and $(iii)$ are a consequence of $(ii)$ (either by choosing $\psi\equiv 0$ to obtain $(i)$, or by choosing $\dt\equiv 0$
and $u\equiv0 $ to obtain $(iii)$). Then $(iv)$ is deduced from $(iii)$ when applied to the regular function $\psi_1(x):= \psi(y_x(-\dt))$.

The plan is first to prove $(i)$, and then to generalize to $(ii)$. 
Precise estimates for the $(2k+2)$nd derivative of $x\converge u(y_x(-\dt))$ will be needed in order to estimate the error when using a Gaussian quadrature formula.
In the following, we first bound the derivatives of $x\converge y_x(-t)$.

\begin{lemma}\label{lem:23}
Assume that $b\in \cC^{k}$, for some $k\geq 1$, and $1$-periodic. Let $L:=\|b'\|_{L^\infty}$ and let $t\in \R$.
Then $x\converge y \equiv y_x(-t)$ is of class $\cC^{k}$, $1$-periodic, and
\be\label{eq:y_relations}
   \left\{\barr{l} 
   \disp 
   \|y \|_{L^\infty(0,1)}\leq 1+\|b\|_{L^\infty} |t|,\\[0.2cm]
   \|\frac{\partial}{\partial x} y \|_{L^\infty(0,1)} \leq e^{L|t|},\\
   \mbox{and, if $k\geq 2$,}\quad 
     \disp \|\frac{\partial^q  }{\partial x^q} y \|_{L^\infty(0,1)} \leq C\,|t|^{q-1}\,e^{L|t|}, 
        \quad \mbox{$\forall q \in\{2,\dots,k\}$},
   \earr\right. 
\ee
for some constant $C\geq 0$. In particular, all the previous derivatives are
bounded on a fixed time interval $t\in[0,T]$. 
\end{lemma}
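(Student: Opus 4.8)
The plan is to differentiate the characteristic ODE \eqref{eq:y_ode} with respect to the initial condition $x$ and then to use Gronwall-type arguments together with an induction on the order of differentiation $q$. I will work with $y=y_x(-t)$, the flow of $\dot y = b(y)$ evaluated at time $-t$; periodicity of $x\mapsto y_x(-t)$ follows immediately from the $1$-periodicity of $b$ and uniqueness for the ODE (if $x$ and $x+1$ are shifted by $1$, so are their trajectories for all times). The $\mathcal{C}^k$ regularity of the flow in the initial data is the standard smooth-dependence-on-initial-conditions theorem, so I would invoke it and concentrate on the quantitative bounds \eqref{eq:y_relations}.

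First I would treat the two base cases. For the sup bound on $y$ itself, I integrate $\dot y = b(y)$ and use $|b|\le \|b\|_{L^\infty}$ to get $|y_x(-t)-x|\le \|b\|_{L^\infty}|t|$, hence $\|y\|_{L^\infty(0,1)}\le 1+\|b\|_{L^\infty}|t|$. For the first $x$-derivative, set $z(s):=\frac{\partial}{\partial x} y_x(s)$; differentiating the ODE in $x$ gives the linear variational equation $\dot z = b'(y)\,z$ with $z(0)=1$, whose solution is $z(s)=\exp\!\big(\int_0^s b'(y_x(\tau))\,d\tau\big)$. Taking $s=-t$ and bounding $|b'|\le L$ yields $\big\|\frac{\partial}{\partial x}y\big\|_{L^\infty}\le e^{L|t|}$.

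For the higher derivatives I would proceed by induction on $q\in\{2,\dots,k\}$. Differentiating the variational equation $q$ times (via Fa\`a di Bruno / the Leibniz rule) shows that $w_q(s):=\frac{\partial^q}{\partial x^q} y_x(s)$ satisfies a linear ODE of the form $\dot w_q = b'(y)\,w_q + F_q$, where the forcing term $F_q$ is a polynomial in the lower-order derivatives $w_1,\dots,w_{q-1}$ with coefficients given by derivatives $b^{(2)},\dots,b^{(q)}$ evaluated along the trajectory, and $w_q(0)=0$ since $y_x(0)=x$ is linear in $x$. Using the inductive hypothesis $\|w_j\|_{L^\infty}\le C|t|^{j-1}e^{L|t|}$ for $j<q$, each monomial in $F_q$ is bounded by a constant times $|t|^{q-2}e^{(q-1)L|t|}$; the Duhamel formula $w_q(-t)=\int_0^{-t} e^{\int_s^{-t} b'(y)\,d\tau}F_q(s)\,ds$ then contributes an extra factor $|t|$ and an exponential $e^{L|t|}$, giving the claimed $C|t|^{q-1}e^{L|t|}$ after absorbing polynomial factors of $L$ into $C$.

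The main obstacle is bookkeeping the combinatorial structure of $F_q$: one must verify that every term arising from repeated differentiation is a product of derivatives $b^{(m)}(y)$ ($m\le q$) times a product of derivatives $w_{j}$ whose indices sum appropriately, so that the total power of $|t|$ collected through successive Duhamel integrations is exactly $q-1$ and not larger. This is precisely the kind of multi-index induction where the exponent on $|t|$ must be tracked carefully; once the inductive ansatz $\|w_q\|_{L^\infty}\le C|t|^{q-1}e^{L|t|}$ is fixed, each step is a routine Gronwall/Duhamel estimate, and boundedness on $[0,T]$ is immediate since $|t|^{q-1}e^{L|t|}$ is continuous in $t$.
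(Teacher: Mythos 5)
Your proposal follows essentially the same route as the paper's proof: the sup bound by integrating the ODE, the variational equation $\dot z = b'(y)\,z$, $z(0)=1$, for the first derivative, and for $q\ge 2$ an induction in which $y^{(q)}$ solves a linear ODE with zero initial data and a forcing built from lower-order derivatives, estimated through the Duhamel formula. The only looseness — your factor $e^{(q-1)L|t|}$ cannot literally be ``absorbed into $C$'' to yield $e^{L|t|}$ for all $t\in\R$ — is present in the paper's own proof as well (its bound $\|f(\cdot,t)\|_{L^\infty}\leq C|t|^{k-2}e^{L|t|}$ has the same issue), and it is harmless since the lemma is only invoked for $t=\dt$ on a bounded time interval.
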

\proof
\newcommand{\dxy}{\frac{\partial }{\partial x} y}
\newcommand{\dqxy}[1]{\frac{\partial^{#1} }{\partial x^{#1}} y}
We consider $y$ as a function of the time $t$ and of $x$.
We can assume that $x\in[0,1]$ since we have $y_{k+x}(t) = k + y_x(t)$ for all $k\in\mathbb{Z}$ and $t,x\in\R$.
We denote by $y^{(k)}\equiv \dqxy{k}$ the $k$-th derivative of $y$ with respect to $x$. 

Firstly, $y(t,x)=x+\int_0^t b(y(s,x))ds$ and therefore, for $x\in(0,1)$, $|y(t,x)|\leq 1 + \|b\|_{L^\infty}|t|$.

For $k=1$ and $b\in \cC^1$, we have $\frac{\partial}{\partial t} \dxy = b'(y) \dxy$ and $\dxy(0)=1$, therefore
$|\dxy(t)|= \exp\big(\int_0^t b'(y(s)) ds\big) \leq e^{L|t|}$.

For $k\geq 2$,
we have 
\beno 
  \frac{\partial}{\partial t} y^{(k)} & = &  (b'(y) y^{(1)})^{(k-1)} \\
          & = & b'(y) y^{(k)} +  \sum_{\ell=1}^{k-1} C^\ell_{k-1} (b'(y))^{(\ell)} y^{(k-\ell)}.
\eeno
Then we use a recursion argument for $\ell=1,\dots, k$.
Let us assume that the spatial derivatives $y^{(\ell)}$ are bounded for $1\leq\ell\leq k-1$, with
$\|y^{(\ell)}\|_{L^\infty(0,1)}\leq C_\ell |t|^{\ell-1} e^{L |t|}$.
Then 
for $k\geq 2$, the function
$f : = \sum_{\ell=1}^{k-1} C^\ell_{k-1} (b'(y))^{(\ell)} y^{(k-\ell)}$ is bounded, with a bound of the form
$\|f(.,t)\|_{L^\infty(0,1)}\leq C |t|^{k-2} e^{L |t|}$, for some constant $C$.
By using the formula, for a given and fixed $x$,
$$
  y^{(k)}(t)= e^{\int_0^t b'(y(s)) ds} y^{(k)}(0) +  \int_0^t e^{\int_s^t b'(y(\mt)) d\mt} f(s) ds,
$$
the fact that $y^{(k)}(0)=0$ for $k\geq 2$ and for $s\in[0,t]$ (or $s\in[t,0]$ if $t\leq 0$):
\beno
  |e^{\int_s^t b'(y(\mt)) d\mt} f(s)| 
    & \leq &  C e^{L|t-s|}\, |s|^{k-2} e^{L |s|} \\
    & \leq & C |t|^{k-2}\, e^{L |t|}
\eeno
we conclude that
$|y^{(k)}(t)|\leq C |t|^{k-1}\, e^{L|t|}$.
\endproof

\begin{lemma}\label{lem:estim_2}
Assume $q\geq k+1$, and $u\in V_k$. On any interval $J$ where $u$ is regular, 
$$ 
  \| \frac{d^{q}}{dx^{q}}(u(y)) \|_{L^\infty(J)} \leq C  \dt \sum_{p=1}^k \| u^{(p)} \|_{L^\infty(y(J))}.
$$
\end{lemma}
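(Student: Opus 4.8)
The plan is to expand the $q$-th derivative of the composite function $u(y_x(-\dt))$ using the Fa\`a di Bruno formula (the chain rule for higher derivatives), and then to exploit the crucial observation that since $u\in V_k$ is a polynomial of degree at most $k$ on the regular interval $J$, all derivatives $u^{(p)}$ with $p\geq k+1$ vanish identically. This is precisely why the hypothesis $q\geq k+1$ matters: in the Fa\`a di Bruno expansion of $\frac{d^q}{dx^q}(u(y))$, every term is a product of some derivative $u^{(p)}(y)$ (for $1\leq p\leq q$) with a polynomial in the derivatives $y^{(1)},\dots,y^{(q)}$ of $y$, and the terms with $p\geq k+1$ drop out. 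Only the terms with $1\leq p\leq k$ survive, which already explains the form of the right-hand side $\sum_{p=1}^k \|u^{(p)}\|_{L^\infty(y(J))}$.

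The key quantitative input is the factor $\dt$. First I would recall the Fa\`a di Bruno structure: each surviving term ($1\leq p\leq k$) is a sum of products $u^{(p)}(y)\, y^{(j_1)}\cdots y^{(j_p)}$ where $j_1+\dots+j_p=q$ and each $j_m\geq 1$. Since there are $p\leq k\leq q-1$ factors $y^{(j_m)}$ summing in their orders to $q$, and since $q\geq k+1>p$, at least one of the orders $j_m$ must be $\geq 2$ (a partition of $q$ into $p<q$ parts cannot consist solely of $1$'s). By Lemma~\ref{lem:23}, the first-order derivative $y^{(1)}=\frac{\partial}{\partial x}y$ is bounded by $e^{L|t|}$ (uniformly bounded on $[0,T]$), whereas each higher derivative $y^{(j)}$ with $j\geq 2$ is bounded by $C|t|^{j-1}e^{L|t|}\leq C\dt\cdot e^{LT}$ for $t=\dt$ small. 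Thus every surviving product contains at least one factor of order $\geq 2$, contributing a factor of $O(\dt)$, while all the remaining $y$-derivative factors are uniformly bounded; this yields the overall prefactor $\dt$ in the estimate.

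Concretely, I would fix the interval $J$ on which $u$ is smooth, take $t=\dt$, and write
\be
  \frac{d^{q}}{dx^{q}}\big(u(y)\big) = \sum_{p=1}^{k} u^{(p)}(y)\, B_{q,p}\big(y^{(1)},\dots,y^{(q-p+1)}\big),
\ee
where $B_{q,p}$ denotes the partial (incomplete) Bell polynomial gathering all partitions of $q$ into exactly $p$ positive parts. For each such partition at least one part exceeds $1$, so by Lemma~\ref{lem:23} the factor $B_{q,p}$ is bounded by $C\,\dt\,e^{LT}$ on $J$, with a constant $C$ depending only on $q,k$ and on the bounds of $b',\dots,b^{(q)}$ (hence on the $\cC^{q}$-norm of $b$, which is available since $q\geq k+1$ and $b$ is assumed smooth enough). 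Taking $L^\infty$ norms over $J$, noting $u^{(p)}(y)$ is evaluated at points of $y(J)$, and summing over $p=1,\dots,k$ gives the claimed bound.

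The main obstacle I anticipate is purely bookkeeping rather than conceptual: one must verify carefully that in \emph{every} admissible partition appearing in $B_{q,p}$ for $p\leq k<q$ there is genuinely at least one part of size $\geq 2$, so that the single factor of $\dt$ can be factored out cleanly, and then confirm that the residual product of $y$-derivatives stays uniformly bounded on $[0,T]$ (which follows from the $e^{L|t|}$ bounds of Lemma~\ref{lem:23} and $t=\dt\leq T$). A minor subtlety is to keep the constant $C$ independent of $\dt$; this is ensured because all the $y$-derivative bounds of Lemma~\ref{lem:23} are uniform on the fixed interval $[0,T]$, and only the \emph{explicit} extra factor $|t|^{j-1}$ from the higher-order derivatives is used to extract $\dt$.
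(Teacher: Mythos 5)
Your proof is correct and follows essentially the same route as the paper: both expand $\frac{d^q}{dx^q}(u(y))$ via Fa\`a di Bruno's formula, truncate the sum at $p\leq k$ because $u\in V_k$, observe that since $p\leq k<q$ no admissible partition of $q$ into $p$ parts can consist only of $1$'s (in the paper's multi-index notation, $\ma_2=\dots=\ma_q=0$ would force $\ma_1=p=q$), and extract the factor $\dt$ from the at-least-one higher-order derivative $y^{(j)}$, $j\geq 2$, bounded by $C|t|^{j-1}e^{L|t|}$ in Lemma~\ref{lem:23}. Your phrasing in terms of partial Bell polynomials is just a repackaging of the same combinatorics, and your attention to the uniformity of the constant in $\dt$ matches the paper's implicit use of $t=\dt\leq T$.
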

\proof 
We first recall 
an expression for the $q$-th derivative of the composite function $u(y)$, also
known as "Fa{\`a} di Bruno's formula"~\cite{FaaDiBruno}:
\be\label{eq:FaadiBruno}
 \frac{1}{q!}\frac{d^q}{dx^q} (u(y(x)))= 
  \sum_{p=1}^{k} 
  u^{(p)}(y(x)) \bigg(
  \sum_{(\ma_j), \ \sum_j\ma_j=p,\ \sum_{j}j\ma_j=q} 
  \frac{ (y^{(1)}/1!)^{\ma_1} \cdots (y^{(q)}/q!)^{\ma_q}}{\ma_1! \cdots \ma_q!} 
  \bigg).
\ee
Here the sum is limited to $p\leq k$ (instead of $p\leq q$) since $u\in V_k$.
\FULL{
A brief proof is given in appendix~\ref{app:FaadiBruno}.
}

Therefore, together with Lemma~\ref{lem:23}, we obtain the bound
$$ 
  \|\frac{d^q}{dx^q} (u(y)) \|_{L^\infty(J)} \leq
  C \sum_{p=1}^{k} 
    \| u^{(p)} \|_{L^\infty(y(J))} 
    \bigg( \sum_{(\ma_j), \ \sum_{j=1}^q\ma_j=p,\ \sum_{j=1}^q j\ma_j=q} \dt^{\ma_2+\dots+\ma_q} \bigg).
$$
The case when $\ma_2=\dots=\ma_q=0$ happens only if $\ma_1=p=q$. Since $q\geq k+1$, and $p\leq k$, this case never occurs.
Therefore, the power of $\dt$ is at least $1$, which concludes the proof.
\endproof


\proof[Proof of Proposition~\ref{prop:2.1}$(i)$:]
Let $\eps$ be the error term, defined by 
\beno
  \eps:= \int_0^1 u(y_x(-\dt)) \varphi(x) dx  -
  \sum_{i=0}^{M-1} \sum_{q=0}^{p_i}\sum_{\ma=0}^{k} \tilde w^i_{q,\ma} 
   u(y_{\tilde x^i_{q,\ma}}(-\dt)) \varphi(\tilde x^i_{q,\ma}).
\eeno
We have $\eps=\sum_i \sum_{q=0}^{p_i} \eps_{i,q}$ where
\be\label{eq:gauss_iq}
  \eps_{i,q}:= 
  \int_{J_{i,q}}  u\big(y_{x}(-\scriptdt)\big)\varphi(x)\, dx
   - \sum_{\ma=0}^k \tilde w^i_{q,\ma} u\big(y_{\tilde x^i_{q,\ma}}(-\scriptdt)\big)\varphi(\tilde x^i_{q,\ma})
\ee
and with $J_{i,q}:=(x_{i,q},x_{i,q+1})$.

Let $u(y)$ be the function $x\converge u(y_x(-\dt))$.
Since $u(y)$ is $\cC^{2k+2}$ regular on $J_{i,q}$ for each fixed $i$, $q\in\{0,\dots,p_i\}$, 
and that the R.H.S.\ of \eqref{eq:gauss_iq} corresponds to the
Gaussian quadrature rule on $J_{i,q}$, then
we have in particular
$$ |\eps_{i,q}| \leq C  \dx_{i,q}^{2k+3}\, \| [u(y) \varphi]^{(2k+2)}\|_{L^{\infty}(J_{i,q})}, 
$$
where $\dx_{i,q}:=x_{i,q+1}-x_{i,q}$.

On the other hand, since $\varphi\in V_{k}$, 
$$
  \|[u(y)\varphi]^{(2k+2)}\|_{L^\infty(J_{i,q})}
  \leq C \sum_{r=0}^{k} \| \varphi^{(r)} \|_{L^\infty(J_{i,q})} \| [u(y)]^{(2k+2-r)} \|_{L^\infty(J_{i,q})}.
$$
For all $r\in\{0,\dots,k\}$ we have $2k+2-r\geq k+2 \geq k+1$, hence we can use Lemma~\ref{lem:estim_2}
and obtain the bound
$$
  \|[u(y)\varphi]^{(2k+2)}\|_{L^\infty(J_{i,q})}
  \leq C \big(\sum_{r=0}^{k} \| \varphi^{(r)} \|_{L^\infty(J_{i,q})}\big) \dt \big(\sum_{p=1}^k\| u^{(p)} \|_{L^\infty(y(J_{i,q}))}\big).
$$
In particular, 
$$
  \sum_{i,q} |\eps_{i,q}|
    \leq   C \sum_{r=0}^{k} \sum_{p=1}^k
    \sum_{i}\sum_{q=0}^{p_i} \dt \dx_{i,q}^{2k+3} \| \varphi^{(r)} \|_{L^\infty(J_{i,q})}  \| u^{(p)} \|_{L^\infty(y(J_{i,q}))}
$$

By a scaling argument \cite{ciarletbook-78,lesaint-raviart-74},
and using that $\varphi \in V_{k}$ for fixed $k$, we have,  $\forall 0\leq r\leq k$,
\be \label{eq:varphi_boundtok1}
    \| \varphi^{(r)} \|_{L^\infty(J_{i,q})} \leq \frac{C}{\dx_{i,q}^{r+1/2}}  \|\varphi \|_{L^2(J_{i,q})}
    \leq \frac{C}{\dx_{i,q}^{k+1/2}}  \|\varphi \|_{L^2(J_{i,q})},
\ee
for some constant $C$, assuming also $\dx_{i,q}\leq 1$
(the idea is to use the fact that for polynomials of degree $k$, by using norm equivalences, 
$\|\varphi^{(r)}\|_{L^\infty(0,1)}\leq C \|\varphi\|_{L^2(0,1)}$ for some constant $C$ independent of $\varphi$,
and then to use a scaling argument from 
$(0,1)$ to $J_{i,q}$ to obtain the desired inequality). 

Denoting by $|J|$ the length of any interval $J$, we have also 
$$|J_{i,q}| e^{-L\dt} \leq |y(J_{i,q})| \leq |J_{i,q}| e^{L\dt}, \quad L:=\|b'\|_{L^\infty},$$
where $|J_{i,q}|=\dx_{i,q}$.  
Hence, for $r\leq k$ and $p\leq k$,
\beno
  \dx_{i,q}^{2k+3} 
  \sum_{i,q} \| \varphi^{(r)} \|_{L^\infty(J_{i,q})}  \| u^{(p)} \|_{L^\infty(y(J_{i,q}))}
  & \leq  &
   C \dx_{i,q}^{2k+3} 
     \sum_{i,q}
     \frac{\| \varphi \|_{L^2(J_{i,q})}}{\dx_{i,q}^{r+1/2}} 
     \frac{\| u \|_{L^2(y(J_{i,q}))}}{|y(J_{i,q})|^{p+1/2}}\\
  & \leq  &
   C \dx_{i,q}^2 \sum_{i,q} \| \varphi \|_{L^2(J_{i,q})} \| u \|_{L^2(y(J_{i,q}))}.
\eeno
Finally, by the Cauchy-Schwarz inequality, 
\beno
  \sum_{i,q} \| \varphi \|_{L^2(J_{i,q})} \| u \|_{L^2(y(J_{i,q}))}  
    & \leq &  \bigg(\sum_{i,q} \| \varphi \|^2_{L^2(J_{i,q})} \bigg)^{1/2} 
              \bigg(\sum_{i,q} \| u \|^2_{L^2(y(J_{i,q}))} \bigg)^{1/2} \\
    & \leq &  \| \varphi \|_{L^2}  \|u\|_{L^2}.
\eeno
since $\bigcup_{i,q} J_{i,q}$ is a covering of $[0,1]$.
Hence we obtain 
$$ \sum_{i,q} |\eps_{i,q}| \leq  C \dt \dx^2 \| \varphi \|_{L^2}  \|u\|_{L^2},
$$
which concludes the proof of $(i)$. 


\proof[Proof of Proposition~\ref{prop:2.1}$(ii)$:]
Let us write $\psi= P + R$ where $P\in V_k$ is defined as the Taylor expansion of $\psi$ on each $J_{i,q}=(x_{i,q},x_{i,q+1})$, around $x_{i,q}$.
We consider the decomposition
\be \label{eq:eqa}
  u(y_{\cdot}(\scriptmdt)) - \psi(y_{\cdot}(\scriptmdt)) \equiv (u-P)(y_{\cdot}(\scriptmdt)) - R (y_{\cdot}(\scriptmdt)) 
\ee
Then by Proposition \ref{prop:2.1}$(i)$, for any $\varphi\in V_{k}$,
$$ | ((u-P)(y_\cdot(\scriptmdt)),\varphi)_G - ((u-P)(y_\cdot(\scriptmdt)),\varphi) | \leq C \dt \dx^2 \| u - P \|_{L^2} \|\varphi\|_{L^2}.$$
Using the fact that $\|R\|_{L^2} \leq C \|R\|_{L^\infty} \leq C M_{k+1}(\psi) \dx^{k+1}$, we obtain the bound
\be
  &  & \hspace{-2cm} | ((u-P)(y_\cdot(\scriptmdt)),\varphi)_G - ((u-P)(y_\cdot(\scriptmdt)),\varphi) |  \nonumber \\
  &  & \leq \   C \dt \dx^2 \| u - \psi \|_{L^2} \|\varphi\|_{L^2} + C M_{k+1}(\psi) \dt \dx^{k+3} \|\varphi\|_{L^2}. \label{eq:eqb}
\ee
There remains to bound the error 
$$ (R(y_{\cdot}(\scriptmdt)),\varphi)_G -  (R(y_{\cdot}(\scriptmdt)),\varphi). $$
This is easily bounded by $C \|R\|_\infty \|\varphi\|_{L^2} = O(\dx^{k+1}\|\varphi\|_{L^2})$.
Combined with \eqref{eq:eqa} and \eqref{eq:eqb}, we obtain the desired bound.
\endproof

\if{
Let us first establish that if $Q\in \cC^{k+1}(J_{i,q})$ (for all $i,q$), then, for all $\varphi\in V_{k}$, 
\be
  \label{eq:result}
   (Q,\varphi)_G - (Q,\varphi) = O( \dx\ M_{k+1}(Q)\ \|\varphi\|_{L^2} ).
\ee
In order to prove  \eqref{eq:result}, let $\eps_{i,q}(Q\varphi)$ be the error of the Gaussian quadrature formula on $J_{i,q}$ for the function $Q\varphi$. 
We have, using a Taylor expansion of order $k$:
\beno | \eps_{i,q}(Q) | 
  &  \leq &  C \dx_{i,q}^{k+2} \| [Q\varphi]^{(k+1)} \|_{L^\infty(J_{i,q})}  \\
  &  \leq &  C \dx_{i,q}^{k+2} \sum_{p\leq k} \| Q^{(k+1-p)} \|_{L^\infty(J_{i,q})}   \| \varphi^{(p)} \|_{L^\infty(J_{i,q})}  \\
  &  \leq &  C \dx_{i,q}^{k+2} M_{k+1}(Q)  \frac{ \| \varphi \|_{L^2(J_{i,q})}}{\dx_{i,q}^{k+1/2}}
   \leq   C \dx_{i,q}^{3/2} M_{k+1}(Q)  \| \varphi \|_{L^2(J_{i,q})}
\eeno
for some constant $C\geq 0$. We conclude using $\sum_{i,q} \dx_{i,q}^{1/2} \|\varphi\|_{L^2(J_{i,q})} \leq \|\varphi\|_{L^2}$.

Then we choose $Q= R(y_{\cdot}(\scriptmdt))$ in \eqref{eq:result}. By using the $\cC^{k+1}$ regularity of $x\converge y_x(\scriptmdt)$, 
we have $M_{k+1}(Q)\leq  M_{k+1}(R)$.
Denoting $\bar x = x_{i,q}$ and for $x\in J_{i,q}$, recall that 
$R(x) = \int_0^1 \frac{(1-t)^{k}}{k!} \psi^{(k+1)}(\bar x + t (x-\bar x))\,dx\ (x-\bar x)^{k+1}$.
In particular, 
$$R^{(p)}(x) = \int_0^1 \frac{(1-t)^{k}}{k!} t^p \psi^{(k+1+p)}(\bar x + t (x-\bar x))\,dx\ (x-\bar x)^{k+1}.$$
Therefore it holds $\|R^{(p)}\|_{L^\infty(J_{i,q})} \leq C \dx^{k+1} \|\psi^{(k+1+p)}\|_{L^\infty(J_{i,q})}$ for all $p\leq k+1$, and finally
$M_{k+1}(Q)\leq C M_{2k+2}(\psi) \dx^{k+1}$. Together with \eqref{eq:result}, we conclude to
\be \label{eq:eqc}
  \big| (R(y_{\cdot}(\scriptmdt)),\varphi)_G -  (R(y_{\cdot}(\scriptmdt)),\varphi) \big| \leq C \dx^{k+2} M_{2k+2}(\psi) \|\varphi\|_{L^2}, 
   \quad \forall \varphi\in V_{k}
\ee
Combining \eqref{eq:eqa}, \eqref{eq:eqb} and \eqref{eq:eqc}, we obtain the desired bound.
}\fi


\subsection{Non-constant $b$: stability and error analysis}

We now turn on the stability and convergence analysis. 
The following result shows the \textit{unconditional} stability of the scheme, for any $k\geq 1$.

\begin{proposition}[Stability] \label{prop:stab_b_nonconstant}
Let $k\geq 0$ and let $b$ be Lipschitz continuous and $1$-periodic. Then:\\ 
$(i)$ for any $u\in L^2$, and $\tilde u(x):= u(y_x(-t))$, it holds:
\be \label{eq:cv}
   \|\tilde u\|_{L^2} \leq e^{\ud L|t|}\  \| u \|_{L^2},
   \quad \mbox{where $L:= \|b'\|_{L^\infty}$.}
\ee
$(ii)$ If furthermore $b$ is of class $\cC^{2k+2}$,
there exists a constant $C_1\geq 0$ such that, $\forall u\in V_k$, 
$$
   \|\wcT_{b,\dt} u \|_{L^2} \leq e^{C_1 \dt } \| u \|_{L^2} \quad \forall u\in V_k.
$$
$(iii)$
In particular for the scheme $u^{n+1}= \wcT_{b,\dt} u^n$, 
$$
   \|u^{n}\|_{L^2} \leq e^{C_1 t_n} \| u^0 \|_{L^2}, \quad \forall n \geq 0,
$$
where $t_n = n \dt$.
\end{proposition}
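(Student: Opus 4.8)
The plan is to prove the three parts in order: $(i)$ is a purely continuous change-of-variables estimate, $(ii)$ is the discrete stability obtained by combining $(i)$ with the Gauss quadrature defect bound \eqref{eq:bound_prop21.b} of Proposition~\ref{prop:2.1}$(i)$, and $(iii)$ is an immediate induction. The conceptual point is that the naive energy argument (testing the weak form with $u^{n+1}$), which gives \eqref{eq:firstbound} when $b$ is constant, breaks down for variable $b$ because the scheme uses the quadrature product $(\cdot,\cdot)_G$ instead of the exact $L^2$ product; the quadrature defect must then be absorbed.

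For $(i)$, I would write $\|\tilde u\|_{L^2}^2 = \int_0^1 |u(y_x(-t))|^2\,dx$ and perform the change of variable $z=y_x(-t)$. Since $b$ is Lipschitz and $1$-periodic, the flow $x\mapsto y_x(-t)$ is a bi-Lipschitz homeomorphism of $\R$ commuting with integer translations (because $y_{x+1}(-t)=y_x(-t)+1$), so it descends to a bijection of the torus $\R/\mathbb{Z}$; this is precisely what legitimizes the periodic change of variables. Its Jacobian is $\frac{\partial}{\partial x}y_x(-t)=\exp\!\big(\int_0^{-t} b'(y(\tau))\,d\tau\big)$, which with $L:=\|b'\|_{L^\infty}$ stays in $[e^{-L|t|},e^{L|t|}]$ (this is the first-derivative bound of Lemma~\ref{lem:23}). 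The inverse Jacobian is therefore bounded by $e^{L|t|}$, whence $\|\tilde u\|_{L^2}^2 \le e^{L|t|}\int |u(z)|^2\,dz = e^{L|t|}\|u\|_{L^2}^2$, giving \eqref{eq:cv} after taking square roots.

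For $(ii)$, I would take $\varphi=\wcT_{b,\dt}u$ in the defining identity $(\wcT_{b,\dt}u,\varphi)=(u(y_\cdot(-\dt)),\varphi)_G$, which yields $\|\wcT_{b,\dt}u\|_{L^2}^2=(u(y_\cdot(-\dt)),\wcT_{b,\dt}u)_G$, and then split off the quadrature defect. Estimate~\eqref{eq:bound_prop21.b} bounds the defect by $C\dt\dx^2\|u\|_{L^2}\|\wcT_{b,\dt}u\|_{L^2}$, while the remaining exact product is bounded, using Cauchy--Schwarz and part $(i)$ with $t=\dt$, by $e^{\frac12 L\dt}\|u\|_{L^2}\|\wcT_{b,\dt}u\|_{L^2}$. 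Dividing by $\|\wcT_{b,\dt}u\|_{L^2}$ gives $\|\wcT_{b,\dt}u\|_{L^2}\le \big(e^{\frac12 L\dt}+C\dt\dx^2\big)\|u\|_{L^2}$. Since $\dx=1/M\le 1$, I would absorb the second term via $1+C\dt\dx^2\le e^{C\dx^2\dt}$, so that $e^{\frac12 L\dt}+C\dt\dx^2\le e^{(\frac12 L+C\dx^2)\dt}\le e^{C_1\dt}$ with $C_1:=\frac12 L+C$ independent of $\dt$ and $\dx$; this uniformity is what makes the stability genuinely unconditional. Part $(iii)$ then follows by the trivial induction $\|u^n\|_{L^2}\le e^{C_1\dt}\|u^{n-1}\|_{L^2}\le\cdots\le e^{C_1 n\dt}\|u^0\|_{L^2}=e^{C_1 t_n}\|u^0\|_{L^2}$.

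The delicate points are structural rather than computational: in $(i)$ one must justify the periodic change of variables (bijectivity of the flow on the torus) and invoke the sharp exponential Jacobian bound; in $(ii)$ the entire argument hinges on the fact that the quadrature defect in \eqref{eq:bound_prop21.b} carries a factor $\dt$ (not merely $\dx^2$), so that it can be folded into $e^{C_1\dt}$ rather than forcing a CFL restriction. This $\dt$-smallness of the consistency error is the real content of the earlier Fa\`a di Bruno and Lemma~\ref{lem:estim_2} analysis, and is the step I would treat most carefully.
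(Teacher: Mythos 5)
Your proof is correct and follows essentially the same route as the paper: part $(i)$ by the periodic change of variables $z=y_x(-t)$ with the exponential Jacobian bound, part $(ii)$ by combining $(i)$ with the quadrature-defect estimate \eqref{eq:bound_prop21.b} and absorbing $e^{\frac12 L\dt}+C\dt\dx^2\le e^{C_1\dt}$, and part $(iii)$ by induction. The only cosmetic difference is that in $(ii)$ you test the scheme with $\varphi=\wcT_{b,\dt}u$ and divide by $\|\wcT_{b,\dt}u\|_{L^2}$, whereas the paper invokes the $L^2$-norm consequence $\wcT_{b,\dt}u=\cT_{b,\dt}u+O(\dt\dx^2\|u\|_{L^2})$ of Proposition~\ref{prop:2.1}$(i)$ together with $\|\Pi w\|_{L^2}\le\|w\|_{L^2}$ --- these are the same estimate presented in dual form.
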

\begin{proof}
$(i)$
We make use of the change of variable $x\converge z:=y_{x}(-t)$, with periodic boundary conditions 
for the integrands.
Therefore we have $x=y_z(t)$ and
$$
   \frac{\partial x}{\partial z} (t) = \exp\bigg( \int_0^t b'(y_z(s)) ds \bigg) \leq e^{L|t|}.
$$
We then obtain
\FULL{
\beno
   \| u(y_{\cdot}(-t)) \|_{L^2}^2 
   & = & \int_\mO |u (y_x(-t))|^2 dx \nonumber \\
   & = & \int_\mO |u(z)|^2 \left|\frac{\partial x}{\partial z}(t)\right| dz  \nonumber\\
   & \leq &  e^{L|t|} \int_\mO |u(z)|^2 dz. 
\eeno
}
\SHORT{
\beno
 \int_\mO |u (y_x(-t))|^2 dx  = \int_\mO |u(z)|^2 \left|\frac{\partial x}{\partial z}(t)\right| dz 
    \leq   e^{L|t|} \int_\mO |u(z)|^2 dz. 
\eeno
}
$(ii)$
By using \eqref{eq:bound_prop21.b}, we have
\be\label{eq:fbound}
    \|\wcT_{b,\dt} u \|_{L^2} &\leq& \| u(y_{\cdot}(-\scriptdt)) \|_{L^2}  + C \dt \dx^2 \|u\|_{L^2}. 
\ee
Together with \eqref{eq:cv} we get a stability constant
$$e^{\frac L 2 \dt} + C \dt \dx^2 \leq e^{\frac L 2 \dt}(1 + C \dt \dx^2)
   \leq e^{\frac L 2 \dt} e^{C \dt \dx^2},$$
hence the desired result for any $C_1\geq 0$ such that $C_1\geq \ud L + C \dx^2$.
\end{proof}

We now state a first convergence result.
It generalizes the error estimate of Theorem~\ref{th:th0}
established in the case when $b$ is constant, to the non-constant case.
\begin{theorem}[Convergence]\label{th:advect_conv}
Let $k\geq 0$. Assume the initial condition  $v_0$ is $1$-periodic and of class $\cC^{k+1}$.
Let $b$ be $1$-periodic and of class $\cC^{2k+2}$. 
There exist constants $C_1\geq 0$, $C\geq 0$ such that
\be \label{eq:th1bound}
  \| u^{n} - v^{n} \|_{L^2} \leq e^{C_1 T} \bigg( \|v^0-u^0\|_{L^2}  +   C T \frac{\dx^{k+1}}{\dt} \bigg),
  \quad \forall n\leq N.
\ee
\end{theorem}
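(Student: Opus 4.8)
The plan is to imitate the constant-coefficient argument of Theorem~\ref{th:th0}, replacing the exact advection stability by its perturbed counterpart from Proposition~\ref{prop:stab_b_nonconstant} and the exact computation of the scheme by the Gauss-quadrature error bounds of Proposition~\ref{prop:2.1}. The central object is the error $e^n := u^n - v^n$, and the goal is a one-step recursion of the form $\|e^{n+1}\|_{L^2} \le e^{C_1\dt}\|e^n\|_{L^2} + (\text{projection error})$, which, after iterating and summing the geometric factors, produces the factor $e^{C_1 T}$ and the accumulated spatial error $CT\dx^{k+1}/\dt$.

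First I would insert the exact solution into the scheme. Since $v^{n+1}(x) = v^n(y_x(-\dt))$ by \eqref{eq:dpp}, I write
\[
  u^{n+1} - v^{n+1} = \wcT_{b,\dt} u^n - v^n(y_\cdot(-\dt)),
\]
and split this as
\[
  u^{n+1} - v^{n+1} = \big(\wcT_{b,\dt} u^n - \wcT_{b,\dt} v^n\big) + \big(\wcT_{b,\dt} v^n - v^n(y_\cdot(-\dt))\big).
\]
The first bracket is controlled by the stability estimate Proposition~\ref{prop:stab_b_nonconstant}$(ii)$ applied to $u^n - v^n$ — but here lies a subtlety, since $\wcT_{b,\dt}$ is linear so $\wcT_{b,\dt}u^n - \wcT_{b,\dt}v^n = \wcT_{b,\dt}(u^n-v^n)$, and stability gives $\|\wcT_{b,\dt}(u^n-v^n)\|_{L^2} \le e^{C_1\dt}\|u^n-v^n\|_{L^2}$, provided $v^n$ can be taken in $V_k$. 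The clean way around this is to compare against $\Pi v^n$ rather than $v^n$ directly, exactly as in the constant case; alternatively one uses part $(iv)$ of Proposition~\ref{prop:2.1} to absorb the difference between acting on $v^n$ and on its smooth self.

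Next I would estimate the \emph{consistency} term $\wcT_{b,\dt}v^n - v^n(y_\cdot(-\dt))$. Writing $\cT_{b,\dt}v^n = \Pi\big(v^n(y_\cdot(-\dt))\big)$, this term decomposes into the projection error $v^n(y_\cdot(-\dt)) - \Pi\big(v^n(y_\cdot(-\dt))\big)$, bounded by Lemma~\ref{lem:proj} by $C\dx^{k+1}$ since $v^n$ and $x\mapsto y_x(-\dt)$ are $\cC^{k+1}$, together with the quadrature error $\wcT_{b,\dt}v^n - \cT_{b,\dt}v^n$, bounded by Proposition~\ref{prop:2.1}$(iv)$, namely $\|\wcT_{b,\dt}v^n - \cT_{b,\dt}v^n\|_{L^2} \le C M_{k+1}(v^n)\dx^{k+1}$. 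Both contributions are $O(\dx^{k+1})$ with a constant uniform in $n$, using that $M_{k+1}(v^n)$ stays bounded on $[0,T]$ (the transport preserves regularity of the smooth data $v_0$, and Lemma~\ref{lem:23} bounds the derivatives of the flow).

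Assembling, the one-step recursion reads
\[
  \|e^{n+1}\|_{L^2} \le e^{C_1\dt}\|e^n\|_{L^2} + C\dx^{k+1}.
\]
Iterating from $n=0$ gives
\[
  \|e^n\|_{L^2} \le e^{C_1 t_n}\|e^0\|_{L^2} + C\dx^{k+1}\sum_{j=0}^{n-1} e^{C_1 j\dt}
    \le e^{C_1 T}\Big(\|e^0\|_{L^2} + C\,n\,\dx^{k+1}\Big),
\]
and bounding $n \le N = T/\dt$ turns the sum into $CT\dx^{k+1}/\dt$, which is exactly \eqref{eq:th1bound}. The main obstacle I anticipate is the bookkeeping at the level of comparing the scheme applied to the numerical $u^n$ against the exact transported $v^n$ while keeping the stability constant $C_1$ and the consistency constant $C$ uniform in $n$: one must verify that the regularity bounds $M_{k+1}(v^n)$ do not degrade over time, which rests on Lemma~\ref{lem:23} guaranteeing that the characteristic flow has derivatives controlled uniformly on $[0,T]$, so that $v^n = v_0\circ y_\cdot(t_n)$ retains $\cC^{k+1}$ norms bounded independently of $n$.
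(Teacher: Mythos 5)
Your proposal is correct and follows essentially the same route as the paper's proof: consistency via Proposition~\ref{prop:2.1}$(iv)$ combined with the projection error to get $v^{n+1}=\wcT_{b,\dt}v^n+O(\dx^{k+1})$, then linearity, the stability bound of Proposition~\ref{prop:stab_b_nonconstant}$(ii)$, and induction. Your explicit handling of the subtlety that $v^n\notin V_k$ (comparing against $\Pi v^n$, or absorbing the difference via Proposition~\ref{prop:2.1}) and your remark that $M_{k+1}(v^n)$ stays bounded on $[0,T]$ by Lemma~\ref{lem:23} are points the paper leaves implicit, and they are resolved correctly.
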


\proof[Proof of Theorem~\ref{th:advect_conv}]
By using the regularity of $v^{n+1}$ and Proposition~\ref{prop:2.1}$(iv)$ we have
\be
  \Pi v^{n+1} = \cT_{b,\dt} v^n = \widetilde \cT_{b,\dt} v^{n} + O (\dx^{k+1}). 
\ee
Because of the projection error $ \| v^{n+1} - \Pi v^{n+1} \| = O(\dx^{k+1})$, then we obtain
the following consistency estimate:
\be
   v^{n+1} = \widetilde \cT_{b,\dt} v^{n} + O (\dx^{k+1}). 
\ee
Therefore
\be
  u^{n+1} - v^{n+1} & = & \widetilde \cT_{b,\dt} (u^n - v^n)  + O(\dx^{k+1}).
\ee
By the stability bound of Proposition~\ref{prop:stab_b_nonconstant}$(ii)$,
\beno
  \| u^{n+1}- v^{n+1} \|_{L^2}
     \leq e^{C_1\dt} \|u^n - v^n \|_{L^2} + C \dx^{k+1}. 
\eeno
We conclude by induction.
\endproof

\if{
Let $\tilde v^{n+1} := \Pi_{V_{k+1}} v^{n+1}$, 
which therefore satisfies 
\be \label{eq:vk1} 
  \| v^{n+1} - \tilde v^{n+1} \| = O(\dx^{k+2}),
\ee
and let 
$$ {\tilde e}^{n+1} : = u^{n+1} - \tilde v^{n+1}. $$
Then it holds, since $\tilde e^{n+1} \in V_{k+1}$ and $v^{n+1} \equiv v^n(y_\cdot(-\scriptdt))$:
\be
  \| \tilde e^{n+1} \|_{L^2}^2  
  & = & ( u^{n+1},\ \tilde e^{n+1}) -  (\tilde v^{n+1},\ \tilde e^{n+1})  \nonumber \\
  & = & ( u^n(y_\cdot(-\scriptdt)), \tilde e^{n+1})_G  - (\tilde v^{n+1},\ \tilde e^{n+1})  \nonumber \\
  & = & ( u^n(y_\cdot(-\scriptdt))- v^{n}(y_\cdot(-\scriptdt)), \tilde e^{n+1})_G  \nonumber  \\
  &   &    \hspace{2cm} + (v^{n+1}, \tilde e^{n+1})_G - (v^{n+1},  \tilde e^{n+1})  \nonumber \\
  &   &    \hspace{2cm} + (v^{n+1} - \tilde v^{n+1},\ \tilde e^{n+1}).  
       \label{eq:pro1a}  
\ee
By using the regularity of $v^{n+1}$ and Proposition~\ref{prop:2.1}$(iii)$ we have
\be
  |(v^{n+1}, \tilde e^{n+1})_G - (v^{n+1},  \tilde e^{n+1})|  \leq C  \dx^{k+2} \| \tilde e^{n+1} \|_{L^2}.
\ee
Then, we have also
\be
   (v^{n+1} - \tilde v^{n+1},\ \tilde e^{n+1})  \leq 
    \|v^{n+1} - \tilde v^{n+1}\|_{L^2} \| \tilde e^{n+1} \|_{L^2} \leq C \dx^{k+2} \| \tilde e^{n+1} \|_{L^2}.
\ee
Finally, by Proposition~\ref{prop:2.1}$(ii)$ we have
\beno
 & & \hspace{-1cm} 
  ( u^n(y_\cdot(-\scriptdt))- v^{n}(y_\cdot(-\scriptdt)), \tilde e^{n+1})_G    \\
 & & \ = \
   ( u^n(y_\cdot(-\scriptdt))- v^{n}(y_\cdot(-\scriptdt)), \tilde e^{n+1})  \\
 & & \ \ \
   + O(\dt \dx \| u^n - v^n \|_{L^2} \|\tilde e^{n+1} \|_{L^2}) 
   + O( \dx^{k+2} \| \tilde e^{n+1} \|_{L^2} ).
\eeno
Therefore, using the Cauchy-Schwarz inequality and previous bounds,
\beno
  \| u^{n+1}-\tilde v^{n+1} \|_{L^2}
     &  \leq &  \| u^{n}(y_\cdot (\scriptmdt)) -v^{n}(y_\cdot(\scriptmdt)) \|_{L^2}  + C_0\dt \dx \|u^n - v^n \|_{L^2} + C \dx^{k+2} 
\eeno
By the stability bound of Proposition~\ref{prop:stab_b_nonconstant}$(i)$,
\beno
  \| u^{n+1}-\tilde v^{n+1} \|_{L^2}
     &  \leq & ( e^{C_1\dt} + C_0 \dt\dx ) \|u^n - v^n \|_{L^2} + C \dx^{k+2}. 
\eeno
Using again~\eqref{eq:vk1}, and that $e^{C_1 \dt} + C_0\dt\dx \leq e^{(C_1  +  C_0 \dx)\dt} \leq e^{L_1\dt}$, we obtain
\beno
  \| u^{n+1}- v^{n+1} \|_{L^2}
     &  \leq &  e^{L_1\dt}  \|u^n - v^n \|_{L^2} + C \dx^{k+2}. 
\eeno
We conclude by induction.
}\fi

\subsection{Stability to perturbations}
We conclude by a stability result with respect to the error of the position of the characteristics.
\MODIF{
\begin{proposition}\label{prop:perturb}
Let $w_1(x):=y_x(-\dt)$ and $w_2(x):=\bar y_x(-\dt)$ be some approximation of $y_x(-\dt)$ such that 
$\max\limits_{i=1,2}|w_i(x)-x|\leq c_0\dt$ for some constant $c_0>0$.
Assume that $\frac{\dt}{\dx}\leq K$ for some constant $K>0$.
Then for all $u,\varphi \in V_k$, it holds
\be\label{eq:perturb}
  \bigg|  \int_0^1 u(w_2(x)) \varphi(x) dx -   \int_0^1 u(w_1(x)) \varphi(x) dx \bigg| \leq  
    C \frac{\|w_2-w_1\|_{L^\infty}}{\dx}\ \|u\|_{L^2}\|\varphi \|_{L^2}
\ee
for some constant $C\geq 0$ independent of $\dt,\dx$.
\end{proposition}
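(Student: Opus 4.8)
The plan is to compare the two integrands pointwise in $x$ through a fundamental-theorem-of-calculus identity adapted to the piecewise-polynomial function $u\in V_k$, splitting the result into a contribution from the cellwise derivative of $u$ and a contribution from the jumps of $u$ across the mesh interfaces $x_{m-\frac12}$. Set $\delta:=\|w_2-w_1\|_{L^\infty}$; since $|w_i(x)-x|\le c_0\dt$ we have $\delta\le 2c_0\dt\le 2c_0K\dx$, so $\delta\le C\dx$, which is the only role played by the hypothesis $\dt/\dx\le K$. For $a,b$ in cell interiors, with $[u]_\xi:=u(\xi^+)-u(\xi^-)$ the jump at an interface $\xi$, one has $u(b)-u(a)=\int_a^b u'(z)\,dz+\sum_{\xi\text{ between }a,b}[u]_\xi$, where $u'$ denotes the a.e.\ cellwise derivative. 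Taking $a=w_1(x)$, $b=w_2(x)$ and integrating against $\varphi$ decomposes the left-hand side of \eqref{eq:perturb} as $D=D_1+D_2$ (up to a global sign), with $D_1=\int_0^1\big(\int_{w_1(x)}^{w_2(x)}u'(z)\,dz\big)\varphi(x)\,dx$ and $D_2=\int_0^1\big(\sum_\xi\pm[u]_\xi\,\mathbf 1_{E_\xi}(x)\big)\varphi(x)\,dx$, where $E_\xi:=\{x:\xi\text{ lies between }w_1(x)\text{ and }w_2(x)\}$.

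Two ingredients are used repeatedly. First, the exact backward flow $w_1(x)=y_x(-\dt)$ is, by Lemma~\ref{lem:23}, a $1$-periodic $\cC^1$ diffeomorphism with $e^{-L\dt}\le w_1'\le e^{L\dt}$ (where $L=\|b'\|_{L^\infty}$); hence $|w_1^{-1}(J)|\le e^{LT}|J|$ for every interval $J$, and no regularity of $w_2$ beyond measurability is needed. Second, the scaling/inverse estimate already exploited in \eqref{eq:varphi_boundtok1} gives, for any $p\in P_k$ on a cell $I$ of length $\dx$, both $\|p\|_{L^\infty(I)}\le C\dx^{-1/2}\|p\|_{L^2(I)}$ and $\|p'\|_{L^2(I)}\le C\dx^{-1}\|p\|_{L^2(I)}$; summing the latter over cells yields $\|u'\|_{L^2}\le C\dx^{-1}\|u\|_{L^2}$.

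For the smooth part I introduce the kernel $K(x,z):=\mathbf 1[z\text{ between }w_1(x),w_2(x)]$, so that $|D_1|\le\iint K(x,z)\,|u'(z)|\,|\varphi(x)|\,dz\,dx$, and apply the Cauchy--Schwarz inequality in the product measure $dz\,dx$. The two marginals are explicit: $\int_0^1 K(x,z)\,dz=|w_2(x)-w_1(x)|\le\delta$, while for fixed $z$ the section $\{x:K(x,z)=1\}$ is contained in $w_1^{-1}((z-\delta,z+\delta))$ and so has measure $\le C\delta$ by the diffeomorphism bound. Using $\|u'\|_{L^2}\le C\dx^{-1}\|u\|_{L^2}$ this gives $|D_1|\le\big(C\delta\,\dx^{-2}\|u\|_{L^2}^2\big)^{1/2}\big(\delta\,\|\varphi\|_{L^2}^2\big)^{1/2}=C\,\delta\,\dx^{-1}\|u\|_{L^2}\|\varphi\|_{L^2}$, exactly the claimed order; note that this step uses only that $w_1$ is a diffeomorphism and does not invoke the CFL condition.

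The jump part $D_2$ is the main obstacle, since it is governed by the discontinuities of $u$, and it is here that $\delta\le C\dx$ is decisive. Each $E_\xi\subset w_1^{-1}((\xi-\delta,\xi+\delta))$ is an interval of length $\le C\delta\le C\dx$, hence meets only $O(1)$ cells; moreover each cell lies in $\hat E_\xi$ (the union of cells meeting $E_\xi$) for only $O(1)$ interfaces $\xi$. With $|[u]_\xi|\le C\dx^{-1/2}\big(\|u\|_{L^2(I_{m-1})}+\|u\|_{L^2(I_m)}\big)$ and, crucially using the $L^\infty$--$L^2$ inverse estimate rather than a plain Cauchy--Schwarz over the small set $E_\xi$ (which would only produce the weaker factor $(\delta/\dx)^{1/2}$), $\int_{E_\xi}|\varphi|\le|E_\xi|\,\|\varphi\|_{L^\infty(E_\xi)}\le C\,\delta\,\dx^{-1/2}\|\varphi\|_{L^2(\hat E_\xi)}$, one obtains, after the triangle inequality, $|D_2|\le C\,\delta\,\dx^{-1}\sum_\xi\big(\|u\|_{L^2(I_{m-1})}+\|u\|_{L^2(I_m)}\big)\|\varphi\|_{L^2(\hat E_\xi)}$. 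A Cauchy--Schwarz over the interfaces, together with the finite-overlap bounds $\sum_\xi(\|u\|_{L^2(I_{m-1})}+\|u\|_{L^2(I_m)})^2\le 4\|u\|_{L^2}^2$ and $\sum_\xi\|\varphi\|_{L^2(\hat E_\xi)}^2\le C\|\varphi\|_{L^2}^2$, then yields $|D_2|\le C\,\delta\,\dx^{-1}\|u\|_{L^2}\|\varphi\|_{L^2}$. Summing the bounds for $D_1$ and $D_2$ proves \eqref{eq:perturb}; throughout, periodicity is read on the circle $\R/\mathbb{Z}$, on which $w_1$ is a diffeomorphism and the interface bookkeeping is unchanged.
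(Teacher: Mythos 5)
Your proof is correct, and it takes a more careful route than the paper's own argument. The paper proves the estimate cell by cell via a mean-value bound: on each interval $I$ it writes $\|u(w_2)-u(w_1)\|_{L^2(I)}\leq \|u'\|_{L^\infty(I_q)}\|w_2-w_1\|_{L^\infty}\dx^{1/2}$ on the enlarged patch $I_q:=I+[-q,q]\dx$ (the CFL hypothesis guaranteeing a bounded stencil $q$), then applies the inverse estimate $\|u'\|_{L^\infty(I_q)}\leq c_1\dx^{-3/2}\|u\|_{L^2(I_q)}$, sums using finite overlap of the $I_q$, and concludes by Cauchy--Schwarz against $\varphi$. This is shorter, but the Lipschitz step is applied across cell boundaries where $u\in V_k$ may jump, so taken literally it misses the interface contributions (for $k=0$ one has $u'=0$ a.e.\ while $u(w_2)\neq u(w_1)$ near interfaces); indeed the intermediate claim $\|u(w_2)-u(w_1)\|_{L^2}\lesssim \frac{\delta}{\dx}\|u\|_{L^2}$ with $\delta=\|w_2-w_1\|_{L^\infty}$ fails in general, since the jump part only satisfies an $L^2$ bound of order $(\delta/\dx)^{1/2}\|u\|_{L^2}$. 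Your decomposition $D=D_1+D_2$ supplies exactly the missing piece: the absolutely continuous part $D_1$ reproduces the paper's mechanism (inverse estimate for $u'$ plus the diffeomorphism bound on $w_1$ from Lemma~\ref{lem:23}, with no need for the CFL), while the jump part $D_2$ recovers the full rate $\delta/\dx$ only because $\varphi$ is itself in $V_k$, via the $L^\infty$--$L^2$ inverse estimate $\int_{E_\xi}|\varphi|\lesssim \delta\,\dx^{-1/2}\|\varphi\|_{L^2(\hat E_\xi)}$ together with the finite-overlap counting that is where $\dt/\dx\leq K$ genuinely enters. In short: the paper's proof buys brevity at the cost of silently treating $u$ as Lipschitz; yours is longer but rigorous for genuinely discontinuous elements, makes transparent that the stated bound holds only in the weak (tested-against-$V_k$) form, and pinpoints the role of each hypothesis.
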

\proof
We first notice that $|y_x(-\dt)-x|\leq c_0\dt\leq c_0\frac{\dt}{\dx} \dx \leq q \dx$ for some integer $q\geq 1$, 
as well as $|\bar y_x(-\dt)-x|\leq q\dx$.
For a given interval $I$, let $I_q:=I+[-q,q]\dx$. It holds:
\beno 
  \|u(w_2)-u(w_1)\|_{L^2(I)} & \leq  & \|u'\|_{L^\infty(I_q)} \|w_2-w_1\|_{L^\infty} \dx^{1/2} \\
    & \leq &  c_1\frac{\|u\|_{L^2(I_q)}}{\dx^{3/2}}\|w_2-w_1\|_{L^\infty} \dx^{1/2} \leq 
     c_1\|u\|_{L^2(I_q)} \frac{\|w_2-w_1\|_{L^\infty}}{\dx}
\eeno
for some constant $c_1>0$ (we have used a scaling argument as before). 
We remark that $\|u\|^2_{L^2(I_q)}= \sum_{j=-q,\dots,q} \|u\|^2_{L^2(I+q\dx)}$ where $J=I+q\dx$ is also another interval of same length as $I$.
Hence  $\sum_I \|u\|^2_{L^2(I_q)}= (2q+1) \|u\|^2_{L^2}$, and 
\beno
  \|u(w_2)-u(w_1)\|_{L^2} & \leq  & c_1 \sqrt{2q+1} \|u\|_{L^2} \frac{\|w_2-w_1\|_{L^\infty}}{\dx}.
\eeno
The result \eqref{eq:perturb} follows by using a Cauchy-Schwarz inequality.
\endproof

\begin{corol}\label{cor:perturb}
We consider that an error is made in the computation of the characteristic $y_x(-\dt)$, such that 
\be\label{eq:epserror}
  |\bar y_x(-\dt)-y_x(-\dt) |\leq \eps
\ee
for some constant $C\geq 0$ and $\eps>0$.
Then the error estimate of order $CT \frac{\dx^{k+1}}{\dt}$ in Theorem~\ref{th:advect_conv}
must be replaced by 
\beno
  CT\frac{\dx^{k+1}}{\dt} + CT \frac{\eps}{\dx\dt} 
\eeno
\end{corol}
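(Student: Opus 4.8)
The plan is to mirror the error recursion used in the proof of Theorem~\ref{th:advect_conv}, carrying along the extra one-step discrepancy produced by replacing the exact characteristic foot $y_x(-\dt)$ by its approximation $\bar y_x(-\dt)$. I denote by $\bar{\wcT}_{b,\dt}$ the scheme operator obtained from $\wcT_{b,\dt}$ by using $\bar y$ in place of $y$, and I let $\bar u^{n+1}=\bar{\wcT}_{b,\dt}\bar u^n$ be the computed (perturbed) iterates, with $\bar u^0=u^0$. I would compare $\bar u^n$ to the unperturbed iterates $u^{n+1}=\wcT_{b,\dt}u^n$ analysed in Theorem~\ref{th:advect_conv}, and conclude by the triangle inequality $\|\bar u^n-v^n\|_{L^2}\leq \|\bar u^n-u^n\|_{L^2}+\|u^n-v^n\|_{L^2}$, the second term being already controlled by Theorem~\ref{th:advect_conv}.

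The key ingredient is a one-step perturbation bound of the form $\|(\bar{\wcT}_{b,\dt}-\wcT_{b,\dt})u\|_{L^2}\leq C\frac{\eps}{\dx}\|u\|_{L^2}$ for $u\in V_k$. To obtain it I would take $\varphi:=(\bar{\wcT}_{b,\dt}-\wcT_{b,\dt})u\in V_k$ as test function, so that $\|\varphi\|_{L^2}^2$ equals the difference of the two defining bilinear forms evaluated at $u$ and $\varphi$; this difference is exactly of the type estimated in Proposition~\ref{prop:perturb}, with $w_1=y_\cdot(-\dt)$, $w_2=\bar y_\cdot(-\dt)$ and $\|w_2-w_1\|_{L^\infty}\leq\eps$ by~\eqref{eq:epserror}. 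Proposition~\ref{prop:perturb} then yields $\|\varphi\|_{L^2}^2\leq C\frac{\eps}{\dx}\|u\|_{L^2}\|\varphi\|_{L^2}$, hence the claimed one-step bound after dividing by $\|\varphi\|_{L^2}$. Here the hypothesis $\dt/\dx\leq K$ of Proposition~\ref{prop:perturb} is precisely what makes the constant $C$ uniform in $\dt,\dx$.

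Setting $e^n:=\bar u^n-u^n$, I would then write $e^{n+1}=\wcT_{b,\dt}e^n+(\bar{\wcT}_{b,\dt}-\wcT_{b,\dt})\bar u^n$, apply the unconditional stability of Proposition~\ref{prop:stab_b_nonconstant}$(ii)$ to the first term and the one-step bound to the second, and use $\|\bar u^n\|_{L^2}\leq\|u^n\|_{L^2}+\|e^n\|_{L^2}$ together with the uniform bound $\|u^n\|_{L^2}\leq e^{C_1T}\|u^0\|_{L^2}$ from Proposition~\ref{prop:stab_b_nonconstant}$(iii)$. This gives a recursion of the form $\|e^{n+1}\|_{L^2}\leq e^{C_2\dt}\|e^n\|_{L^2}+C\frac{\eps}{\dx}$, with $e^0=0$. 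A discrete Gronwall argument (identical in spirit to the induction closing Theorem~\ref{th:advect_conv}) then accumulates the per-step error $C\frac{\eps}{\dx}$ over the $N=T/\dt$ time steps, producing the extra term $CT\frac{\eps}{\dx\dt}$, exactly parallel to the way the consistency error $\dx^{k+1}$ produces the term $CT\frac{\dx^{k+1}}{\dt}$.

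The main obstacle I anticipate is the passage from Proposition~\ref{prop:perturb}, which is stated for the exact integral $\int_0^1 u(w_i(x))\varphi(x)\,dx$, to the scheme operators $\wcT_{b,\dt},\bar{\wcT}_{b,\dt}$, which are defined through the Gaussian-quadrature form $(\cdot,\cdot)_G$ and whose subdivision nodes $\{x_{i,q}\}$ themselves move when $y$ is replaced by $\bar y$. One must check that these quadrature effects are of the same (or higher) order, so that the clean one-step bound $C\frac{\eps}{\dx}\|u\|_{L^2}$ survives; I would control them with the regularity estimates of Lemma~\ref{lem:estim_2} and the Gauss-quadrature error analysis already carried out in Proposition~\ref{prop:2.1}, exactly as was done to pass from $\cT_{b,\dt}$ to $\wcT_{b,\dt}$. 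A secondary point is to keep the amplification factor $e^{C_2\dt}$ close to the stability constant, which is guaranteed as long as $\eps/\dx$ stays controlled by the CFL-type ratio $\dt/\dx\leq K$.
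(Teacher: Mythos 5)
Your proof is correct and follows essentially the same route as the paper's (two-line) sketch: a one-step perturbation bound of order $\eps/\dx$ obtained from Proposition~\ref{prop:perturb}, combined with the stability of $\wcT_{b,\dt}$ and accumulated over $N=T/\dt$ steps exactly as in the induction closing Theorem~\ref{th:advect_conv}. Your write-up is in fact more careful than the paper's sketch, notably in making the operator-level bound explicit via the test function $\varphi=(\bar{\wcT}_{b,\dt}-\wcT_{b,\dt})u$ and in flagging (and proposing how to absorb) the point the paper glosses over, namely that Proposition~\ref{prop:perturb} concerns exact integrals while the implemented scheme uses the quadrature form $(\cdot,\cdot)_G$ with subdivision nodes that move when $y$ is replaced by $\bar y$.
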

\begin{proof}[Sketch of proof.]
At each time step an error of order $\eps=\|w_2-w_1\|_{L^\infty}$
is made in the computation of the characteristics. By the previous Lemma this results
in a supplementary error term of order $\frac{\eps}{\dx}$.
Hence after $N=\frac{T}{\dt}$ time steps the error coming from  the computations of the integrals will be bounded by $T O(\frac{\eps}{\dx\dt})$.
\end{proof}

We remark that in practice, this approximation error is not seen in the numerical tests 
because the characteristics are computed using an analytical formula
or a machine precision fixed point method when needed. 
A high-order approximation method would also lead to $\eps:= C \dt^{q+1}$ in \eqref{eq:epserror} 
which can be made arbitrarily small
in particular because we deal only with one-dimensional approximations of characteristics in the proposed method.
}

\section{Second-order PDEs}
\newcommand{\Sohd}{D^0_{h^{1/2}}}
\newcommand{\Soht}{D^0_{h^{1/3}}}
\newcommand{\Sotd}{D^0_{\dt^{1/2}}}
\newcommand{\Sott}{D^0_{\dt^{1/3}}}
\newcommand{\Dhd}{D_{h^{1/2}}}
\newcommand{\Dht}{D_{h^{1/3}}}
\newcommand{\Dtd}{D_{\dt^{1/2}}}
\newcommand{\Dtt}{D_{\dt^{1/3}}}

This section deals with SLDG schemes for second-order PDEs.
We will first deal with a simple diffusion problem with constant coefficients, for which specific schemes can be obtained,
and then we consider the more general case of advection - diffusion problems with variable coefficients.

\subsection{Case of a diffusion equation with constant coefficient}
We first consider a diffusion equation with a constant coefficient $\sigma \in\R$:
\be
  & & v_t  - \frac{\sigma^2}{2} v_{xx} = 0, \quad x\in \mO,\ t\in (0,T), \label{eq:diffusion}\\
  & & v(0,x) = v_0(x), \quad x \in \Omega,
\ee
and aim to construct simple schemes in this particular setting. 
Following Kushner and Dupuis \cite{kus-dup-92}, a first scheme, in semi-discrete form, is
\be \label{eq:semi_d}
  u^{n+1}(x) =  \frac{1}{2} \bigg( u^n(x - \sigma \sqrt{\dt}) + u^n(x + \sigma \sqrt{\dt}) \bigg) \equiv S^0_\dt u^n\,(x).
\ee
It is easy to see that, taking $v^n(x):=v(t_n,x)$ where $v$ is the solution of \eqref{eq:diffusion} and is assumed sufficiently regular,
the following consistency error estimate holds:
$$
    \| \frac{v^{n+1} - S^0_\dt v^n}{\dt} \|_{L^2} = O(\dt).
$$
The basic SLDG scheme (also called hereafter SLDG-1) is based on the weak formulation of \eqref{eq:semi_d}.

\noindent
\mbox{\underline{SLDG-1 scheme:}} 
Define recursively $u^{n+1}$ in $V_k$ such that
\beno 
 & &  \int u^{n+1}(x)\varphi(x) dx = 
   \int  \frac{1}{2} \bigg( u^n(x-\sigma\sdt) + u^n(x+\sigma\sdt) \bigg) \varphi(x)\, dx,\quad \forall \varphi\in V_k.
\eeno
(The initialization of $u^0$ is done as before).
The scheme will be also written in abstract form as follows:
$$
   u^{n+1} = \cS_\dt(u^n),
$$
where
$$ 
  \cS_\dt :=  \Pi \cS^0_\dt \equiv \frac{1}{2} \bigg(\cT_{-\sigma\sdt}  + \cT_{\sigma\sdt} \bigg).
$$


Before doing the numerical analysis, our aim is first to improve 
the accuracy with respect to the time discretization.
The technique proposed here is to use a convex combination of $u$, $S_\dt$, $S_\dt S_\dt$, \dots
It will work only for the constant coefficient case ($\ms$ constant).

Using Taylor expansions, for $u$ sufficiently regular, we have, for $\dt$ small,
\be
  S^0_\dt u  
     & = & u + \frac{\ms^2}{2} u_{xx}\dt + \frac{\ms^4}{24} u^{(4)}_{x}\dt^2 + O(\dt^3), \label{eq:dev_S0u}\\
  S^0_\dt S^0_\dt u 
     & = & u + \ms^2 u_{xx}  \dt         + \frac{\ms^4}{3} u^{(4)}_{x}\dt^2  + O(\dt^3), \label{eq:dev_S0S0u}
\ee
where $u_x^{(q)}$ denotes the $q$-th derivative of $u$ w.r.t. $x$.

\medskip
On the other hand, if $v^n= v(t_n,x)$ where $v$ is the exact solution of $v_t = \frac{\ms^2}{2} v_{xx}$,
we have
\be
  v^{n+1} & = & v^n +   v_t \dt +  \frac{1}{2} v_{tt}\dt^2 + O(\dt^3) \\
          & = & v^n +  \frac{\ms^2}{2} v^n_{xx}\dt +  \frac{\ms^4}{8} v^{n,(4)}\dt^2 + O(\dt^3).
  \label{eq:vnplusun}
\ee

Now, looking for coefficients $a,b,c$  such that
$ a v^n + b S^0_\dt v^n + c S^0_\dt S^0_\dt v^n$ is equal to $v^{n+1}$ up to $O(\dt^3)$, 
using \eqref{eq:dev_S0u} and \eqref{eq:dev_S0S0u},
we obtain the system
\begin{eqnarray}
\left\{\begin{array}{rrrrrrr}  
    a & + & b & + &  c & = & 1 \\
      &   & \frac{b}{2} & + & c & = & \frac{1}{2} \\
      &   & \frac{b}{24} & + & \frac{c}{3}  & = & \frac{1}{8}
\end{array}
\right.
\end{eqnarray}
and we find that $a=b=c=\frac{1}{3}$.
Therefore, a second-order scheme (for constant coefficient) is now given by\\
\mbox{\underline{SLDG-2 scheme:}}
\be 
  & & \hspace{-3cm} u^{n+1} = S_\dt^{2} u^n :=
   \frac{1}{3} (u^n + S_{\dt} u^n  + S_{\dt} S_{\dt} u^n).
    \label{eq:SLDG-2const}
\ee

\begin{rem}\label{rem:3.1}
A variant of this scheme can be
\be \label{eq:SLDG-RK2variant}
  u^{n+1} = \Pi \frac{1}{3} \bigg(u^n + S^0_{\dt} u^n  +  S^0_{\dt} S^0_{\dt} u^n\bigg).
\ee
This is in general slightly different from \eqref{eq:SLDG-2const} because $S_\dt S_\dt = \Pi S^0_\dt \Pi S^0_\dt$
may differ from $\Pi S^0_\dt S^0_\dt$. Nevertheless, the difference between the two will be of the order of the projection 
error $O(\dx^{k+1})$ when applied to a regular data.
\end{rem}

In order to obtain a third-order scheme, we can proceed in a similar way.
First, we obtain the following expansions:
\beno
  S^0_\dt u       & = & u + \frac{\ms^2}{2} u_{xx}\dt + \frac{\ms^4}{24} u^{(4)}_{x}\dt^2 + 
   \frac{\ms^6}{6!} u^{(6)}_{x} \dt^3 + O(\dt^4),\\
  S^0_\dt S^0_\dt u & = & u +  \ms^2 u_{xx}\dt   + \frac{\ms^4}{3} u^{(4)}_{x} \dt^2 +   
    \frac{2}{45} \sigma^6 u^{(6)}_{x} \dt^3 + O(\dt^4),\\
  S^0_\dt S^0_\dt S^0_\dt u &  = &  
    u +  \frac{3}{2} \ms^2 u_{xx}\dt + \frac{7}{8} \ms^4 u^{(4)}_{x} \dt^2
      + \frac{61}{240}  \ms^6 u^{(6)}_{x}\dt^3  + O(\dt^4).
\eeno
Looking for coefficients $a,b,c,d$  such that 
$ a v^n + S^0_\dt v^n + S^0_\dt S^0_\dt v^n + S^0_\dt S^0_\dt S^0_\dt v^n $ 
is equal to $v^{n+1}$ up to $O(\dt^4)$, we find the system
\begin{eqnarray}
\left\{\begin{array}{rrrrrrrrr}  
    a & + & b & + &  c & + & d  = & 1 \\
      &   & \frac{b}{2} & + & c & + &  \frac{3}{2} d = & \frac{1}{2} \\
      &   & \frac{b}{24} & + & \frac{c}{3}  & + & \frac{7}{8} d  = & \frac{1}{8} \\
      &   & \frac{b}{6!} & + & \frac{2}{45} c  & + & \frac{61}{240}d  = & \frac{1}{48}
\end{array}
\right.
\end{eqnarray}
and its solution 
$$ 
   (a,b,c,d):= \frac{1}{45} (13, 21, 9 ,2).
$$
Thus, the following scheme is of $3$rd-order in time:\\
\mbox{\underline{SLDG-3 scheme:}} 
\be  
  & & \hspace*{-1cm} u^{n+1} = 
  S_\dt^{3} u^n :=
  \frac{13}{45} u^n + \frac{7}{15} S_\dt u^n +  \frac{1}{5} S_\dt S_\dt u^n + \frac{2}{45} S_\dt S_\dt S_\dt u^n. 
  \nonumber 
\ee

As in Remark~\ref{rem:3.1}, a variant of the scheme can be 
\be
  u^{n+1} = \Pi \bigg(\frac{13}{45} u^n + \frac{7}{15} S^0_\dt u^n +  \frac{1}{5} S^0_\dt S^0_\dt u^n + \frac{2}{45} S^0_\dt S^0_\dt S^0_\dt u^n\bigg). 
  \label{eq:SLDG-RK3variant} 
\ee

Since we are using a convex combination of stable schemes 
($S_\dt$, $S_\dt S_\dt$ or $S_\dt S_\dt S_\dt$), the schemes SLDG-1, SLDG-2 and
SLDG-3 are all stable in the $L^2$ norm.

\begin{rem}
Up to 5th-order schemes - in time - can also be obtained (see~\cite{bok-bon-2013}), using convex combinations of the form 
$u^{n+1} =\sum_{i=0}^p a_i (S^0_\dt)^i u^n$.
\end{rem}

We now state a convergence result for \eqref{eq:diffusion}.

\begin{theorem}\label{th:order2_conv1}
Let $k\geq 0$ and let $\sigma$ be a constant, and
assume that the exact solution $v$ of \eqref{eq:diffusion} 
has bounded derivative $\frac{\partial^{q} v}{\partial x^{q}}$ for $q=\max(k+2,2p+2)$.
We consider the SLDG-p schemes with $p=1,2$ or $3$.
Then
\be
  \| v^{n} - u^{n} \|_{L^2} \leq
    \|v^0-u^0\|_{L^2} +   C T (\frac{\dx^{k+1}}{\dt} + \dt^p),
  \quad \forall n\leq N.
\ee
Furthermore the same results hold for the variants \eqref{eq:SLDG-RK2variant},\eqref{eq:SLDG-RK3variant} for $p=2,3$.
%
\end{theorem}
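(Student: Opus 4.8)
The plan is to follow the classical route of deducing convergence from \emph{stability} plus \emph{consistency}, exploiting that for constant $\sigma$ all the building blocks are $L^2$ contractions. First I would record the stability. Under periodic boundary conditions each translation $w\converge w(\cdot\pm\sigma\sdt)$ is an $L^2$ isometry, so $S^0_\dt=\frac12(\cT_{-\sigma\sdt}+\cT_{\sigma\sdt})$ satisfies $\|S^0_\dt w\|_{L^2}\le\|w\|_{L^2}$ for \emph{every} $w\in L^2$; since the projection $\Pi$ is an $L^2$ contraction, so is $S_\dt=\Pi S^0_\dt$ and each of its powers. Because the SLDG-$p$ operator $S_\dt^{p}$ and the variant $\Pi\sum_i a_i(S^0_\dt)^i$ are \emph{convex} combinations of such contractions (coefficients $\tfrac13,\tfrac13,\tfrac13$ for $p=2$ and $\tfrac1{45}(13,21,9,2)$ for $p=3$, all nonnegative and summing to one), they are themselves $L^2$ contractions on all of $L^2$. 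This is the only place stability is used, and it is unconditional.

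Next the consistency. By construction the coefficients $a_i$ were chosen from the Taylor expansions \eqref{eq:dev_S0u}, \eqref{eq:dev_S0S0u}, \eqref{eq:vnplusun} (and their third-order analogues) precisely so that the exact solution obeys $v^{n+1}=\sum_{i=0}^{p}a_i(S^0_\dt)^i v^n+O(\dt^{p+1})$ in $L^2$, the $\dt^{p+1}$ remainder being controlled by $\|v^{(2p+2)}\|_{L^\infty}$ since the PDE $v_t=\tfrac{\sigma^2}{2}v_{xx}$ converts the time-Taylor remainder into $2p+2$ spatial derivatives (this is where the regularity exponent $q\ge 2p+2$ enters, while $q\ge k+2$ guarantees the projection estimates below). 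Applying the $L^2$-stable projection and using Lemma~\ref{lem:proj} on the regular datum $v^{n+1}$, I obtain for the variant operator $\widetilde S^{p}_\dt:=\Pi\sum_i a_i(S^0_\dt)^i$ the local truncation estimate
\[
  \widetilde S^{p}_\dt v^n=\Pi v^{n+1}+O(\dt^{p+1})=v^{n+1}+O(\dt^{p+1}+\dx^{k+1}),
\]
so that $\tau^n:=\widetilde S^{p}_\dt v^n-v^{n+1}$ satisfies $\|\tau^n\|_{L^2}=O(\dt^{p+1}+\dx^{k+1})$.

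Error propagation then finishes the variant case. Since $\widetilde S^{p}_\dt$ is linear, $e^{n+1}:=u^{n+1}-v^{n+1}=\widetilde S^{p}_\dt e^{n}+\tau^n$, and the contraction property (valid on all of $L^2$, hence applicable to $e^{n}=u^n-v^n$ even though $v^n\notin V_k$) yields $\|e^{n+1}\|_{L^2}\le\|e^{n}\|_{L^2}+C(\dt^{p+1}+\dx^{k+1})$. Summing the $N=T/\dt$ contributions gives $\|e^{n}\|_{L^2}\le\|e^{0}\|_{L^2}+NC(\dt^{p+1}+\dx^{k+1})=\|e^0\|_{L^2}+CT\big(\dt^{p}+\dx^{k+1}/\dt\big)$, which is the asserted bound.

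Finally, transferring the result to the \emph{original} schemes SLDG-$p$ is what I expect to be the only genuinely delicate point, because these use nested projections $S_\dt S_\dt=\Pi S^0_\dt\,\Pi S^0_\dt$ rather than $\Pi S^0_\dt S^0_\dt$ and leave the identity term unprojected. Comparing term by term, the discrepancy $S_\dt^{p}v^n-\widetilde S^{p}_\dt v^n$ on a regular function is a sum of differences of the type controlled in Remark~\ref{rem:3.1} (inserting/removing an intermediate projection around $S^0_\dt$) together with the projection error $v^n-\Pi v^n$, each of which is $O(\dx^{k+1})$. Hence the truncation error of the original scheme is still $O(\dt^{p+1}+\dx^{k+1})$, and since $S_\dt^{p}$ is again a convex combination of $L^2$ contractions on all of $L^2$, the induction above applies verbatim. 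The case $p=1$ requires no projection-commutation argument and follows directly from $S^0_\dt v^n=v^{n+1}+O(\dt^2)$ after projecting.
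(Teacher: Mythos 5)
Your proposal is correct and follows essentially the same route as the paper's proof: consistency of order $O(\dt^{p+1})$ from the Taylor-designed weights, the projection-commutation estimate $\Pi S^0_\dt\psi=\Pi S^0_\dt\Pi\psi+O(\dx^{k+1})$ on regular data (the paper's Remark~\ref{rem:3.1}), $L^2$ contractivity of the convex combination of the $(S_\dt)^i$, and induction over the $N=T/\dt$ steps. The only difference is presentational — you treat the variant \eqref{eq:SLDG-RK2variant} first and then transfer to SLDG-$p$, while the paper argues directly on SLDG-2 and notes the other cases are similar.
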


\if{
We first consider the SLDG-1 scheme ($p=1$).
By definition, we have 
\be
  (u^{n+1}, \varphi) = (S^0_\dt u^n, \varphi), \quad \forall \varphi \in V_k.
\ee
As previously mentioned, since the scheme is implemented by using Gaussian quadrature formula which is exact for polynomials of $P_{2k+1}$
on each interval of regularity, we observe that the previous identity is valid for one more order:
\be (u^{n+1}, \varphi) = (S^0_\dt u^n, \varphi), \quad \forall \varphi \in V_{k+1}.
\ee
Then let $\tilde v^{n+1} : = \Pi_{V_{k+1}}(v^{n+1})$. By using the scheme definition,
\beno
  \| u^{n+1} - \tilde v^{n+1} \|_{L^2}^2  
  & = & ( u^{n+1}, u^{n+1} - \tilde  v^{n+1} )  -  (\tilde v^{n+1}, u^{n+1} - \tilde v^{n+1}) \\
  & = & ( S^0_\dt u^n, u^{n+1} - \tilde v^{n+1} ) -  (\tilde v^{n+1}, u^{n+1} -   \tilde v^{n+1})  \\
  & = & ( S^0_\dt u^n- \tilde v^{n+1} , u^{n+1} - \tilde v^{n+1} ). 
\eeno
In particular, by using the Cauchy-Schwarz inequality,
$$  \| u^{n+1} - \tilde v^{n+1} \|_{L^2} \leq \| S^0_\dt u^n - \tilde v^{n+1} \|_{L^2}.  $$
Since $\|v^{n+1} - \tilde v^{n+1} \|_{L^2} \leq C \dx^{k+2}$, and by using the consistency estimate
\be \label{eq:SLDGRK1consist}
   v^{n+1} =  S^0_\dt v^n + O(\dt^2 ( \|v^n_{tt}\|_{L^\infty} +  \| v^{n,(4)}_{x}\|_{L^\infty} )), 
\ee
we obtain
\beno
  \| u^{n+1} - v^{n+1} \|_{L^2} 
     & \leq &  \| S^0_\dt u^n  -  v^{n+1} \|_{L^2}  +  2 C \dx^{k+2} \\
     & \leq &  \| S^0_\dt u^n  -  S^0_\dt v^{n} \|_{L^2}  +  C \dt^2 +  C \dx^{k+2} \\
     & \leq &  \| u^n -  v^{n}\|_{L^2}  +  C \dt^2 +  C \dx^{k+2}. \\
\eeno
where we have also used the fact that $\| S^0_\dt w \|_{L^2} \leq \| w \|_{L^2}$.
The result follows by induction.
}\fi
\proof
We  will consider the proof in the case of the SLDG-2 scheme, with $p=2$, the other cases being similar.
By using the regularity of the exact solution ($\frac{\partial^3 v}{\partial t^3}$ and $v^{n,(6)}_x$ bounded), we have the following consistency 
estimate:
\be
   v^{n+1} = a_0 v^n + a_1 S^0_\dt v^n + a_2 S^0_\dt S^0_\dt v^n  + O(\dt^3),
   \label{eq:SLDGRK2consist}
\ee
where $a_0=a_1=a_2=\frac{1}{3}$, and the bound $O(\dt^3)$ is in the norm $\|.\|_{L^2}$. 
Since $\Pi S^0_\dt  \psi= \Pi S^0_\dt \Pi \psi + O (\dx^{k+1}) $ for regular data $\psi$, we have also
$S^2_\dt v^n = \Pi (S^0_\dt)^2 v^n  + O(\dx^{k+1})$, and thus
\be
   v^{n+1} = a_0 v^n + a_1 S_\dt v^n + a_2 S_\dt S_\dt v^n  + O(\dt^3) + O(\dx^{k+1}).
   \label{eq:SLDGRK2consist.b}
\ee
By the definition of the scheme we have
\be
  u^{n+1} = \sum_{i=0}^2 a_i (S_{\dt})^i u^n.
\ee
We deduce, using the consistency estimate \eqref{eq:SLDGRK2consist},
\beno
  \| u^{n+1} - v^{n+1} \|_{L^2} 
     & \leq &  \| \sum_{i\leq 2} a_i (S_\dt)^i (u^n - v^{n}) \|_{L^2}  +  C\dt^3 +  C \dx^{k+1}\\
     & \leq &  \sum_{i\leq 2} a_i \| (S_\dt)^i (u^n - v^n) \|_{L^2}  +  C\dt^3 +  C \dx^{k+1}\\
     & \leq &  \| u^n -  v^{n}\|_{L^2}  +  C \dt^3 +  C \dx^{k+1}, \\
\eeno
(since $a_i\geq 0$ and $\sum_i a_i =1$).
The result follows by induction.
\endproof

\if{
Also, by the $\cC^{k+1}$ regularity (in the variable $x$) of $v^{n+1}$ and Lemma~\ref{lem:proj} we have 
\be\label{eq:estim01}
  v^{n+1} = \Pi (a_0 v^n + a_1 S^0_\dt v^n + a_2 S^0_\dt S^0_\dt v^n)  + O(\dt^3)+ O(\dx^{k+1}). 
\ee
Furthermore, since $S^0_\dt v^n$ is $\cC^{k+2}$ regular,  we have $\Pi_{V_{k+1}} S^0_\dt v^n - S^0_\dt v^n =  O(\dx^{k+1})$ and therefore
$$ \Pi S^0_\dt \Pi S^0_\dt v^n - \Pi S^0_\dt S^0_\dt v^n =  O(\dx^{k+1})$$ 
(using the fact that $\| \Pi S^0_\dt w \|_{L^2} \leq \|w\|_{L^2}$).
From \eqref{eq:estim01}, and since $S_\dt=\Pi S^0_\dt$, 
\be
  v^{n+1} 
   & = & a_0 v^n + a_1 S_\dt v^n + a_2 S_\dt S_\dt v^n   \nonumber \\
   &   & \hspace{2cm} + O(\dt^3)+ O(\dx^{k+1}).  \label{eq:estim02}
\ee
Considering the difference with the scheme, 
$$  
  v^{n+1} - u^{n+1}  = a_0 (v^n - u^n) + a_1 S_\dt (v^n - u^n)  + a_2 S_\dt S_\dt (v^n - u^n) + O (\dt^3 + \dx^{k+1}).
$$
By $\|S_\dt w\|_{L^2} \leq \| w \|_{L^2} $ and the fact that $a_i\geq0$, $\sum_i a_i=1$,
we obtain 
$$ \| v^{n+1} - u^{n+1} \|_{L^2} \leq  \| v^n - u^n \|_{L^2}  + C (\dt^3 + \dx^{k+1}).$$
We conclude the proof by induction.
}\fi

\if{
\begin{rem}
The chosen denomination "RK1", "RK2" and "RK3" is because of the relationship with Runge-Kutta schemes for first-order equations.
It is indeed related to Strong Stability Preserving (SSP) schemes~\cite{Gottlieb_Shu_Tadmor_2001, Osher_Shu_88, Shu_REVIEW_07}. 
For instance, if we consider the Euler scheme for the ODE $\dot u = L(u)$ during time step $\dt$  written as $u^{n+1} = S(u^n)$, 
then it is known that the Heun scheme can also be written $u^{n+1}= \frac{1}{2} (u^n + S S u^n)$ 
and is of order $O(\dt^2)$, it corresponds to a so-called "SSP-RK2" approximation. This can be compared to the present SLDG-RK2 scheme.
A known "SSP-RK3" approximation for $\dot u= L(u)$ is given by $u^{n+1} = \frac{1}{6} (2 u^n + 3 S u^n + S S S u^n)$,
and can be compared to the present SLDG-RK3 scheme.
\end{rem}
}\fi

\FULL{
\begin{rem}
We have chosen the DG framework for the spatial approximation, but the same idea for improving the order in time can 
be combined with other spatial approximations~\cite{bok-bon-2013}.
\end{rem}
}

\if{
\subsection{$1d$-diffusion equations with non-constant coefficient $\sigma(x)$ {\bf (OLD)}}
We now consider the case of 
\be
   v_t + \frac{1}{2} \ms^2(x) v_{xx} = 0, \quad t>0, \quad x\in \mO.
  \label{eq:heateq}
\ee
On the first hand, if $v^n= v(t_n,x)$ where $v$ is the exact solution of \eqref{eq:heateq}, and for $h\equiv\dt$,
we have
\be
  v^{n+1} & = & v^n +  h v_t +  \frac{h^2}{2} v_{tt} + O(h^3)  \nonumber\\
          & = & v^n +  h \frac{\ms^2}{2} v_{xx} + h^2 \frac{\ms^2}{8} \big(\ms^2 v^n_{xx}\big)_{xx} + O(h^3) \nonumber \\
          & = & v^n +  h \frac{\ms^2}{2} v_{xx} + h^2 \frac{\ms^4}{8} (v^n)^{(4)}_x 
     + h^2 \bigg\{ \frac{\ms^2 (\ms^2)'}{4} (v^n)^{(3)}_{x} + \frac{\ms^2 (\ms^2)''}{8} v^n_{xx}\bigg\} \nonumber \\
          &   &  \hspace{8cm}+ O(h^3). 
  \label{eq:vnplusun-b}
\ee
Now, we observe that 
\be
  \frac{1}{3} (u + S^0_h u  + S^0_h S^0_h u) & =  & 
   \label{eq:mixedS0}
  \\
  & & \hspace{-3cm} 
   u +  h \frac{\ms^2}{2} u_{xx} + h^2 \frac{\ms^4}{8} u^{(4)}_{x} +  h^2
    \bigg\{   \frac{\ms^2 (\ms^2)'}{6} u^{(3)}_x + \frac{\ms^2 (\ms^2)''}{12} u_{xx} \bigg\} +  O(h^3).\nonumber
\ee
We have new terms depending of $h^2 u^{(3)}_x$ and $h^2 u^{(4)}_x$ and that will differ between~\eqref{eq:vnplusun-b}
and~\eqref{eq:mixedS0}.

In order to take these new terms into account,  we introduce the following approximations.
For a given $h>0$, 
let $D^0_h$ and $D_h$ be defined by
$$
   D^0_h u(x) := \frac{1}{2} (u(x+h) - u(x-h)) \quad \mbox{and} \quad D_h : = \Pi D^0_h.
$$
We have for a regular $u$: 
$$
  D^{0}_h u = h u_x + \frac{1}{6} h^3 u^{(3)}_x + O(h^5  \|u^{(5)}_x\|_\infty),
$$
from which it can be obtained that 
$$ 
   D^0_h D^0_h D^0_h u  = h^3 u^{(3)}_x + \frac{1}{2} h^{5} u_x^{(5)} +  O(h^7  \|u^{(7)}_x\|_\infty),
$$
and
$$ 
   D^0_h D^0_h D^0_h D^0_h D^0_h  u  = h^5 u^{(5)}_x + O(h^7  \|u^{(7)}_x\|_\infty).
$$
Therefore, 
\be\label{eq:approx_uxxx0}
  & & h (\Soht \Soht \Soht u - \frac{1}{2} \Soht \Soht \Soht \Soht \Soht u) \\
  & & \hspace{6cm} = h^2 u^{(3)}_x + O(h^{10/3}\|u_x^{(7)}\|), \nonumber 
\ee
and, in the same way, 
\be\label{eq:approx_uxx}
  h \Sohd \Sohd u  = h^2 u_{xx} + O(h^{3} \|u_x^{(4)}\|_\infty)
\ee
(\eqref{eq:approx_uxx} corresponds also to the usual second-order finite difference approximation).
Also, if $u$ is regular and that we replace the operator $D^0_h$ by $D_h$, they the above approximations
\eqref{eq:approx_uxxx0} or \eqref{eq:approx_uxx} are still valid up to a complementary error term in $O(\dx^{k+1})$
coming from the projection on $V_k$.
Hence we propose the following.\\ 
\begin{indent}
\mbox{\underline{SLDG-RK2 "modified" scheme}:} 
\end{indent}
\be
  & & u^{n+1} = \frac{1}{3} (u^n + S_\dt u^n  + S_{\dt} S_\dt u^n)  \nonumber \\
  & & \hspace{1cm}  
   + \dt \frac{\ms^2 (\ms^2)'}{12}  \big(  \Dtt \Dtt \Dtt u^n - \frac{1}{2} \Dtt\Dtt\Dtt\Dtt\Dtt u^n \big)  \nonumber\\
  & & \hspace{1cm}  
   + \dt \frac{\ms^2 (\ms^2)''}{24} \Dtd \Dtd u^n \label{eq:modified_SLDG_RK2}.
\ee
Indeed, the missing part for the term $\frac{1}{6} \ms^2 (\ms^2)' u_x^{(3)}$, with factor $\frac{1}{6}$, 
from the desired factor $\frac{1}{4}$ in~\eqref{eq:vnplusun-b}, is $\frac{1}{4}-\frac{1}{6}=\frac{1}{12}$.
Also, the missing part for the term $\frac{1}{12}\ms^2  (\ms^2)'' u_{xx}$, with factor $\frac{1}{12}$,
from the desired factor $\frac{1}{8}$ in~\eqref{eq:vnplusun-b}
is $\frac{1}{24}$.
The expected error is of order $O(\dt^3)+O(\dx^{k+1})$ at each time step, hence the expected global error is of order
$O(\dt^2) + O(\frac{\dx^{k+1}}{\dt})$.
}\fi

\subsection{Advection-diffusion with variable coefficients}
We recall that for the following PDE:
\be
  & & -v_t  - \frac{\sigma(t,x)^2}{2} v_{xx} - b(t,x) v_x + r(t,x) v = f(t,x), \quad x\in \mO,\ t\in (0,T), \nonumber \\
  & & \hspace{10cm} \label{eq:generalPDE-backward}
\ee
with $\mO=\R$ and terminal condition $v(T,x):=w(T,x)$,
introducing a probability space
 $\left({\mathcal Q}, \mathbb F, \mathbb P\right)$ with a  filtration $\{\mathbb F_t\}_{t\geq 0}$, 
and a one-dimensional Brownian motion $(W_t)_{t\geq 0}$,
and the solution $X_s=X^{t,x}_s$ of the stochastic differential equation
\beno
  & & dX_s = b(s,X_s) ds + \ms(s,X_s) dW_s,\quad s\geq t,\\
  & &  X_t=x,
\eeno
and if $v$ is a regular solution of the PDE \eqref{eq:generalPDE-backward}
on $(t,T)$ (assuming that the partial derivatives $\partial_t v$ and 
$\partial_{xx} v$ exist and are continuous) then 
the following equivalent expectation, or "Feynman-Kac" formula, holds:
\be
 & &  v(t,x) = \E \bigg[ e^{-\int_t^{T} r(\mt,X_\mt)d\mt} w(T,X^{t,x}_{T})
  +  \int_t^{T} e^{-\int_t^s r(\mt,X_\mt)d\mt} f(s,X^{t,x}_{s})\,ds
  \ \big|\ \cF_t \bigg]. \nonumber
 \\
 & & \label{eq:feynman-kac}
\ee

To simplify, we shall focus here on the case when $b$ and $\ms$ do not depend of time, and $r$ is constant.
We consider the forward PDE:
\be \label{eq:heateq}
  & & u_t\ -\frac{\sigma(x)^2}{2} u_{xx} - b(x) u_x + r u = f(t,x), \quad x\in \mO,\ t\in (0,T). \label{eq:generalPDE}
\ee
In that case the Feynman-Kac formula gives, with $h=\dt$, $T=t+h$ and $u^n(x):=u(t_n,x)$:
\be\label{eq:expect-formula} 
  & & u^{n+1}(x) = \E \bigg[ e^{-rh} u^n(X^{0,x}_{h})\big|\ \cF_t \bigg]  + w(h,x)
\ee
with 
\be \label{eq:whx}
  w(h,x):=
  \E \bigg[ \int_0^{h} e^{-rs} f(t_n+h-s,X^{0,x}_{s})\,ds
   \ \big|\ \cF_t \bigg].
\ee
Let $\cA w:= \frac{\sigma(x)^2}{2} w_{xx} + b(x) w_x - r w$.
The term $w(h,x)$ is also the solution at time $s=h$ of the linear problem
$ w_t(s,x) = (\cA w)(s,x) + \bar f(s,x)$ with initial condition $w(0,x)=0$, and with $\bar f(s,x):=f(t_n+s,x)$.
Assuming that the source term $f$ is regular and that we can use its derivatives, 
we can approximate it with an error $O(h^{q+1})$ by using a  Taylor expansion:
$w(h,x)\simeq \sum_{j=1}^q \frac{h^j}{j!} w_{jt}(0,x)$ (where $w_{jt}$ denotes the $j$-th derivative
with respect to time).
In particular, $w_t(0,x)=\bar f(0,x) = f(t_n,x)$, and 
$w_{tt}= (\cA w + \bar f)_t = \cA w_t + \bar f_t = \cA (\cA w + \bar f) + \bar f_t$, so $w_{tt}(0,x)=(\cA f)(t_n,x) + f_t(t_n,x)$.
Hence in order to devise a second-order scheme we approximate \eqref{eq:whx} by
\be \label{eq:whx-approx}
  w(h,x) 
  & = & h f(t_n,x) +  \frac{h^2}{2}(\cA f + f_t)(t_n,x) + O(h^3).
\ee
The modification of the scheme is obtained, therefore, by adding at each time step the following correction term atGGauss quadrature points
\be \label{eq:whx-correction}
  h f(t_n,x) +  \frac{h^2}{2}(\cA f + f_t)(t_n,x).
\ee


For the approximation of the expectation in \eqref{eq:expect-formula}, 
we aim to use a higher-order semi-discrete approximation
also called "weak Taylor approximations" in the stochastic setting, 
see in particular Kloeden and Platen~\cite[Chapter 15]{klo-pla-95}.
General semi-discrete (and fully-discrete) approximations can be found in~\cite{kus-dup-92}. 

We will focus on first- and second-order weak Taylor approximations.
Some of these approximation may use the derivatives of $b$ and $\ms$
(Milstein~\cite{Milstein_1986}, Talay~\cite{Talay_1984}, Pardoux and Talay~\cite{Pardoux_Talay_1985}).
In our case we shall use a derivative-free formula of Platen~\cite{Platen_1984}
(explicit second- and third-order derivative-free formula can be found in Kloeden and Platen~\cite{klo-pla-95},
as well as multidimensional extensions).

\if{
\footnote{OLD: 
Let $X_h$ be the Markov chain such that if $X_0=x$, then
$$ 
  X_h = x +  \ms(x) W\, \sqrt{h}  
          + \frac{1}{2}\ms(x)\ms'(x) (W^2-1)\, h 
	  + \frac{1}{4}\ms^2(x)\ms''(x) h^{3/2} W,
$$
where $W$ is a $N(0,1)$ random variable.
We consider the approximation of $W$ by a real-valued random variable $Q$ such that 
$\bP(Q=\{-\sqrt{3},0,\sqrt{3}\})$ is $\{\frac{1}{6}, \frac{2}{3}, \frac{1}{6}\}$,
respectively, and the corresponding trajectories 
$$ 
  y^q_h(x) = x +  \ms(x) q\, \sqrt{3h}  
          + \frac{1}{2}\ms(x)\ms'(x) (3q^2-1)\, h 
	  + \frac{1}{4}\ms^2(x)\ms''(x) \sqrt{3} q\, h^{3/2}
$$
that will be considered for $q \in\{-1,0,1\}$ only.
}
}\fi

Let us denote $b=b(x)$, $\ms=\ms(x)$ as well as $\mg^q_\dt=\mg^q_\dt(x)$:
\be\label{eq:y_euler}
  \mg^q_\dt(x)  & := & x + b(x)\dt +   q  \ms(x) \sqrt{\dt}.
\ee

Our SLDG-1 scheme, corresponding to a first-order (weak Euler scheme), is defined by 
\be \label{eq:SLDG_RK1}
  u^{n+1}\equiv S^{(1)}_\dt u^n := \Pi \bigg( \sum_{q=\pm 1} \ma_q u^n(y^q_\dt(\cdot))\bigg)
\ee
with weights $\ma_{-1}=\ma_1=\frac{1}{2}$ and characteristics $y^q_h=\mg^q_h$.

Our SLDG-2 scheme, corresponding to the second-order Platen's scheme, is defined by
\be \label{eq:SLDG-2}
  u^{n+1}\equiv S^{(2)}_\dt u^n := \Pi \bigg( \sum_{-1\leq q\leq1 } \ma_q u^n(y^q_\dt(\cdot))\bigg)
\ee
with weights $\ma_{-1}=\ma_1=\frac{1}{6}$ and $\ma_0=\frac{2}{3}$ and characteristics $y^q_h=y^q_h(x)$ defined by:
\be
  y^q_h(x) 
  &  = &  x + \frac{1}{2}(b(\mg^{\sqrt{3} q}_h)+b)\,h   
    \label{eq:y_platen}
  \\
  & & 
  + \frac{1}{4}\bigg[\big( \ms(\mg^1_h) + \ms(\mg^{-1}_h)+ 2\ms\big) \sqrt{3}\ q 
  + \big(\ms(\mg^1_h) - \ms(\mg^{-1}_h)\big) (3q^2-1) \bigg]
    \sqrt{h}.
    \nonumber
\ee

\begin{rem} In the constant coefficient case $\ms(x)\equiv \ms$, the scheme becomes
\be \label{eq:SLDG_Platen_msconstant}
  u^{n+1}\equiv S^{(2)}_\dt u^n := \Pi \bigg( \frac{1}{6} u^n(x-\ms\sqrt{3\dt}) + \frac{2}{3} u^n(x) +  \frac{1}{6} u^n(x+\ms\sqrt{3\dt})
  \bigg)
\ee
\end{rem}

\begin{rem} 
Higher-order weak Taylor schemes can be found in \cite{klo-pla-95} and could be used with DG to devise fully discrete schemes in the same way.
\end{rem}

The above SLDG-1/2 schemes
are no more exactly implementable because $b(x)$ and $\ms(x)$ are not constant.
So, as in the advection case,
we consider the use of a Gaussian quadrature rule on each interval of regularity 
of the data.


\begin{rem} Notice that if $h$ is small enough such that
\be \label{eq:inverse-1-cond} 
  \| h b' + \sqrt{h} \ms'\|_{L^\infty}< 1,
\ee
then for each $q=\pm 1$ the function $x\converge \mg^q_h(x)$ is a one-to-one and onto function.
Furthermore, its inverse can be easily and rapidly computed by using a fixed point method or Newton's algorithm.
Details are left to the reader.

In the same way, for $h$ small enough such that, for instance,
\be \label{eq:inverse-2-cond} 
  h \|b'\|_{L^\infty} + 3\sqrt{h} \|\ms'\|_{L^\infty}< 1,
\ee
then $x\converge y^q_h(x)$ as defined in \eqref{eq:y_platen}
is one-to-one and onto function.
\end{rem}

\indent\mbox{\underline{SLDG-1 scheme (fully discrete)}:} 
For each given $\eta=\pm 1$,
we consider a partition of $I_i$ into intervals $J^\eta_{i,q}$ such that all $y^{\eta}(J^\eta_{i,q})$ are subintervals
of some $I_j$. We then define Gauss points $\tilde x^{i,\eta}_{q,\ma}$ and the bilinear product $(a,b)_{G^\eta}$ in a similar way as 
in~\eqref{eq:defi_bilin_G}, that is, 
using the Gaussian quadrature rule on each $J^\eta_{i,q}$.
Hence we define $\widetilde S^{(1)}_{\dt} u^{n}$ in $V_k$ such that\\
\be
  & &  \hspace{-2.5cm} 
    (\widetilde S^{(1)}_{\dt} u^{n}, \varphi) =  \ud \sum_{\eta=\pm} (u^n(y^\eta),\varphi)_{G^\eta}, \quad \forall \varphi \in V_k.
  \label{eq:tildeSdtdef_nonconstant}
\ee
Formula~\eqref{eq:tildeSdtdef_nonconstant} involves two different quadrature rules, because the discontinuity
points of $u^n(y^+(x))$ and $u^n(y^-(x))$ are not the same.
It differs from the definition of $S^{(1)}_{\dt} u$, which satisfies
\be    (S^{(1)}_{\dt} u, \varphi) =  \ud \sum_{\eta=\pm} (u(y^\eta),\varphi), \quad \forall \varphi \in V_k.
  \label{eq:Sdtdef_nonconstant}
\ee

\indent\mbox{\underline{SLDG-2 scheme (fully discrete)}:}  In a similar way, we define
$\widetilde S^{(2)}_{\dt} u^n$ in $V_k$ by: 
\be
  & &  \hspace{-2.5cm} 
    (\widetilde S^{(2)}_{\dt} u^{n}, \varphi) =   \sum_{-1\leq \eta \leq 1} \ma_\eta
    (u^n(y^\eta_\dt),\varphi)_{G^\eta}, \quad \forall \varphi \in V_k.
  \label{eq:tildeS2}
\ee


\if{
{\color{blue}
In order to implement the SLDG-2 modified scheme, 
the operator $\widetilde S_\dt$ is used instead of $S_\dt$ in \eqref{eq:modified_SLDG_RK2}.
New terms appear also in \eqref{eq:modified_SLDG_RK2} and 
that are computed naturally in the following way.
We first compute the DG polynomials for $\Dtt \Dtt \Dtt u^n$ (resp. $\Dtd \Dtd u^n$, etc.). 
The operators $\Dtt$ or $\Dtd$ involve only exact quadrature rules since they use constant advection parameters.
Then we pointwise multiply the polynomials at each Gauss quadrature points $x^i_\ma$ of the intervals $I_i$,
by the corresponding value of $\dt\, \ms^2(\ms^2)'$ (resp. $\dt\, \ms^2 (\ms^2)''$). 

These implementation steps create errors with respect to the theoretical schemes, \eqref{eq:modified_SLDG_RK2} or \eqref{eq:SLDG_RK1}, 
and that need to be controlled.
}
}\fi

%


\subsection{Stability and convergence}



We first state some useful estimates for the operators 
$\widetilde S_\dt\in\{ \widetilde S^{(1)}_\dt,\, \widetilde S^{(2)}_\dt\}$.
The proof is similar to the one of Proposition~\ref{prop:2.1}.

\begin{proposition}\label{prop:3.1}
Let $k\geq 0$ and let $\ms$ be of class $\cC^{2k+2}$ and $1$-periodic. Then:\\
$(i)$ there exists a constant $C\geq 0$ such that, for any $y^q_\dt$, for all $u\in V_k$, 
\be 
  \label{eq:bound_prop31.b}
  & & \bigg| (u(y^q_\dt),\varphi)_{G^\eta} - (u(y^q_\dt),\varphi) \bigg|
  \leq
  C \sdt \dx^2 \| u \|_{L^2} \| \varphi \|_{L^2}
  \quad \forall \varphi \in V_{k}.  \nonumber 
\ee
In particular, for any $u\in V_k$,
\be
  \label{eq:bound_prop31_2}
  \widetilde S_{\dt} u = S_{\dt} u + O\big( \sdt\dx^2  \| u\|_{L^2}\big).
\ee
$(ii)$ 
For all $u\in V_k$, for any $\psi$ in $\cC^{k+1}$, 1-periodic,
\be 
    \widetilde S_\dt (u - \psi) =  S_\dt (u-\psi) + 
    O(\sdt \dx^2 \| u - \psi \|_{L^2}) + O(M_{k+1}(\psi) \dx^{k+1}),
   \label{eq:bound_prop31_ii.b}
\ee
where $C\geq 0$ is a constant.\\
$(iii)$ 
For any regular $\psi \in \cC^{k+1}$, $1$-periodic, we have in the $L^2$ norm 
\be
  \widetilde S_\dt  \psi  = S_\dt \psi + O (M_{k+1}(\psi) \dx^{k+1}).
  \label{eq:bound_prop31_iiib}
\ee
\end{proposition}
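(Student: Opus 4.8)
The plan is to follow the proof of Proposition~\ref{prop:2.1} almost line by line, the only genuinely new point being the scale of the stochastic characteristics. Both families entering $\widetilde S^{(1)}_\dt$ and $\widetilde S^{(2)}_\dt$ — the Euler flow $\mg^q_\dt(x)=x+b(x)\dt+q\ms(x)\sdt$ and the Platen flow \eqref{eq:y_platen} — differ from the identity by $O(\sdt)$, whereas the advection flow $y_x(-\dt)$ differed from it by $O(\dt)$. This single change of scale is what turns every $\dt$ in Proposition~\ref{prop:2.1} into $\sdt$ and yields \eqref{eq:bound_prop31.b}. I would prove the single-characteristic quadrature estimate $(i)$ first, then deduce $(ii)$ from it by a Taylor splitting $\psi=P+R$, and finally prove $(iii)$ directly for the regular integrand. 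In each case the passage from the pointwise bilinear estimate to the operator statement is the same: one expands $(\widetilde S_\dt w-S_\dt w,\varphi)=\sum_\eta\ma_\eta\big[(w(y^\eta),\varphi)_{G^\eta}-(w(y^\eta),\varphi)\big]$, bounds each summand, uses $\sum_\eta\ma_\eta=1$, and tests against $\varphi:=\widetilde S_\dt w-S_\dt w\in V_k$.

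The heart of the argument is the analogue of Lemmas~\ref{lem:23} and~\ref{lem:estim_2}. For either characteristic a direct differentiation gives, uniformly for $\dt\in[0,T]$,
$$\partial_x y^q_\dt=1+O(\sdt),\qquad \partial_x^p y^q_\dt=O(\sdt)\quad (2\leq p\leq 2k+2).$$
For the Euler flow this is immediate since $\partial_x^p\mg^q_\dt=b^{(p)}\dt+q\ms^{(p)}\sdt$; for the Platen flow \eqref{eq:y_platen} it follows by expanding the compositions $\ms(\mg^{\pm1}_\dt)$ and $b(\mg^{\sqrt{3}q}_\dt)$ and applying the chain rule, using $\ms\in\cC^{2k+2}$ (and the corresponding regularity of $b$). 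Substituting these bounds into Fa\`a di Bruno's formula \eqref{eq:FaadiBruno} exactly as in Lemma~\ref{lem:estim_2}, and using that $u\in V_k$ restricts the outer sum to $p\leq k$, I would obtain, for any $q'\geq k+1$ and on any interval $J$ of regularity of $u$,
$$\Big\|\frac{d^{q'}}{dx^{q'}}\big(u(y^q_\dt)\big)\Big\|_{L^\infty(J)}\leq C\,\sdt\sum_{p=1}^k\|u^{(p)}\|_{L^\infty(y^q_\dt(J))}.$$
The factor $\sdt$ appears for the same reason the factor $\dt$ did before: since $\sum_j\ma_j=p\leq k<q'=\sum_j j\ma_j$, the multi-index cannot have $\ma_2=\dots=\ma_{q'}=0$, so at least one derivative $\partial_x^p y^q_\dt$ with $p\geq2$ — hence at least one factor $O(\sdt)$ — is always present.

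With this bound, part $(i)$ assembles exactly as in Proposition~\ref{prop:2.1}$(i)$: on each $J^\eta_{i,q}$ the Gauss error is $O(\dx_{i,q}^{2k+3}\,\|[u(y^\eta)\varphi]^{(2k+2)}\|_{L^\infty})$; the Leibniz rule and the display above extract one factor $\sdt$ and leave only $\|u^{(p)}\|$, $\|\varphi^{(r)}\|$ with $p,r\leq k$; the inverse inequality \eqref{eq:varphi_boundtok1} for $\varphi$ and its analogue for $u$, the elementary count $\dx_{i,q}^{2k+3-(r+1/2)-(p+1/2)}\geq\dx_{i,q}^2$, and the volume comparison $|y^\eta(J^\eta_{i,q})|\sim|J^\eta_{i,q}|$ reduce the sum to $C\sdt\dx^2\sum_{i,q}\|\varphi\|_{L^2(J^\eta_{i,q})}\|u\|_{L^2(y^\eta(J^\eta_{i,q}))}$, and a final Cauchy--Schwarz over the covering $\{J^\eta_{i,q}\}$ (a genuine covering for $\dt$ small, cf.\ \eqref{eq:inverse-1-cond}--\eqref{eq:inverse-2-cond}) gives $C\sdt\dx^2\|u\|_{L^2}\|\varphi\|_{L^2}$. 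For $(ii)$ I would write $\psi=P+R$ with $P\in V_k$ the piecewise Taylor polynomial of degree $k$ about the endpoints $x_{i,q}$, apply $(i)$ to $(u-P)(y^\eta)$, and bound the remaining $R(y^\eta)$ contribution directly by $C\|R\|_{L^\infty}\|\varphi\|_{L^2}=O(M_{k+1}(\psi)\dx^{k+1}\|\varphi\|_{L^2})$. Part $(iii)$ is proved directly: $\psi(y^\eta)\in\cC^{k+1}$ with $M_{k+1}(\psi(y^\eta))\leq C\,M_{k+1}(\psi)$ by the chain rule and the bounded derivatives of $y^\eta$, so the quadrature error for the regular integrand $\psi(y^\eta)$, estimated by a per-interval degree-$k$ Taylor expansion and the exactness of Gauss quadrature on $P_{2k+1}$, is $O(M_{k+1}(\psi)\dx^{k+1}\|\varphi\|_{L^2})$.

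The step I expect to require the most care is the uniform-in-$\dt$ control of the high-order spatial derivatives of the Platen characteristic \eqref{eq:y_platen}: unlike the Euler flow, it is a nonlinear combination of $b$, $\ms$ and their compositions with $\mg^{\pm1}_\dt,\mg^{\sqrt{3}q}_\dt$, so verifying that every derivative of order $\geq 2$ is genuinely $O(\sdt)$ — and not merely $O(1)$ — means checking that each surviving term keeps its explicit $\sdt$ (or $\dt$) prefactor after repeated differentiation. Once this scaling is secured, the remaining work — Fa\`a di Bruno, the inverse inequalities, and the Cauchy--Schwarz summation — is identical to the advection case already carried out in Proposition~\ref{prop:2.1}.
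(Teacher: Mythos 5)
Your proposal is correct and follows essentially the same route as the paper, which proves Proposition~\ref{prop:3.1} by declaring it ``similar to the one of Proposition~\ref{prop:2.1}'': you correctly identify the single new ingredient, namely that the characteristics $\mg^q_\dt$ and $y^q_\dt$ deviate from the identity at scale $O(\sdt)$ rather than $O(\dt)$, so that the Fa\`a di Bruno bound of Lemma~\ref{lem:estim_2} yields a factor $\sdt$ and every subsequent step (Gauss error per interval, inverse inequalities, volume comparison under \eqref{eq:inverse-1-cond}--\eqref{eq:inverse-2-cond}, Cauchy--Schwarz over the covering, then testing against $\varphi:=\widetilde S_\dt w-S_\dt w$) carries over verbatim. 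Your Taylor splitting $\psi=P+R$ for $(ii)$ and the direct chain-rule argument for $(iii)$ match the structure of the advection proof, and your flagged concern about the Platen flow resolves as you suggest, since every non-identity term in \eqref{eq:y_platen} retains its explicit $\sqrt{h}$ or $h$ prefactor under repeated $x$-differentiation.
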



\if{
It is known that for the error of the Gaussian quadrature rule we have, $\forall \varphi \in \cC^{2k+2}$, $\exists \xi_x\in(a,b)$ 
such that
\be\label{eq:gausserr}
  \int_a^b \varphi(x) dx - \sum_{\ma} w_\ma \varphi(x_\ma) =  \frac{\varphi^{(2k+2)}(\xi_x)}{(2k+2)!} \int_a^b p_n^2(x) dx
\ee
where $p_n(x)=\prod_{\ma=0,\dots,k} (x-x_\ma)$ (see~\cite{Stoer_Bulirsch}), and 
$ \int_a^b p_n^2(x) dx \leq C (b-a)^{2k+3}$.

Now, we define again an error term $\eps^{\pm}_{i,q}$, in a similar way as in~\eqref{eq:gauss_iq},
\be \label{eq:gauss_iq_new}
  \eps^{\pm}_{i,q}:= 
  \sum_{\ma=0}^k \tilde w^i_{q,\ma} u\big(y^{\pm}(\tilde x^i_{q,\ma})\big)\varphi(\tilde x^i_{q,\ma})
  -
  \int_{J_{i,q}}  u\big(y^\pm_{x}\big)\varphi(x)\, dx,
\ee
and the global error to control corresponds to $\eps:=\sum_{i,q} \sum_{\eta=\pm} \eps^\eta_{i,q}$.
We look at the sum of the two error terms for given indices $(i,q)$, using~\eqref{eq:gausserr}:
\beno
  \sum_{\eta=\pm} \eps^{\pm}_{i,q}:= 
  \sum_{\eta=\pm} 
   \frac{ \big(u(y^\eta)\varphi\big)^{(2k+2)}(\xi^\eta_{i,q})}{(2k+2)!} \int_{J_{i,q}} p_n^2(x) dx,
\eeno
with $J_{i,q}:=(x_{i,q},x_{i,q+1})$ and $\xi^\pm_{i,q}\in J_{i,q}$. 
Then we expand the $(2k+2)$ derivatives, using that $\varphi\in V_{k+1}$:
\beno 
  \sum_{\eta=\pm} (u(y^\eta)\varphi)^{(2k+2)}(\xi^\eta_{i,q})
    & = & 
   \sum_{r=0}^{k}  C_{2k+2}^r \sum_{\eta=\pm} \ \varphi^{(r)}(\xi^\eta_{i,q})\  (u(y^\eta))^{(2k+2-r)}(\xi^{\eta}_{i,q}) 
\eeno
Let us denote, for given indices $(i,q)$, 
$$
  \Delta_r := \frac{1}{2} \sum_{\eta=\pm} \varphi^{(r)}(\xi^\eta_{i,q})  (u(y^\eta))^{(2k+2-r)}(\xi^\eta_{i,q}).
$$
Since $\int_{J_{i,q}} p_n^2(x) dx \leq C \dx^{2k+3}$, for a given $(i,q)$ we have the bound
\beno
  |\sum_{\eta=\pm} \eps^{\pm}_{i,q}| & \leq  &  \sum_{r=0}^{k} \dx^{2k+3} |\Delta_r|.
\eeno

By making use of Fa{\`a} di Bruno's formula and as in the proof of Proposition~\ref{prop:2.1}, 
denoting $h=\eta\sdt$ for $\eta =\pm$, 
we obtain that for each $0\leq r\leq k+1$, 
\beno
  \dx_{i,q}^{2k+3}|\Delta_r| 
    & \leq & C \dx_{i,q}^{2k+3} \|\varphi^{(r)} \|_{L^\infty(J_{i,q})} \sum_{\eta=\pm}\| (u(y^\eta))^{(2k+2-r)} \|_{L^\infty(J_{i,q})}  \\
    & \leq & C \dx_{i,q}^{2k+3} \frac{\|\varphi\|_{L^2(J_{i,q})}}{\dx_{i,q}^{r+\ud}}  \sum_{\eta=\pm} |h|
      \frac{\| u\|_{L^2(y^\eta(J_{i,q}))}}{\dx_{i,q}^{k+\ud}}  \\
    & \leq & C |h| \dx_{i,q}^{k-r+2} \sum_{\eta=\pm} \|\varphi\|_{L^2(J_{i,q})} \| u\|_{L^2(y^\eta(J_{i,q}))}.
\eeno
Using this bound for all $r\leq k$, and using $\dx_{i,q}\leq \dx\leq 1$, we obtain
\be \label{eq:3.1firstpart}
  \dx_{i,q}^{2k+3} \sum_{r=0}^{k} |\Delta_r|  \leq C |h| \dx^{2} \sum_{\eta=\pm}  \|\varphi\|_{L^2(J_{i,q})} \| u\|_{L^2(y^\eta(J_{i,q}))}.
\ee
There remains to bound the last term for $r=k+1$. 
We remark that $\varphi^{(k+1)}$ is a constant on the interval $J_{i,q}^\eta$ and for $\varphi \in V_{k+1}$, hence 
\be
  \hspace{-0.5cm}
  \Delta_{k+1}
         & = & \varphi^{(k+1)} \ud \sum_{\eta=\pm} (u(y^\eta))^{(k+1)}(\xi^\eta_{i,q}) \nonumber \\
         & = & \varphi^{(k+1)} \bigg(\ud \sum_{\eta=\pm}  (u(y^\eta))^{(k+1)}(\bar x_{i,q})\bigg) \nonumber \\
	 &   & \hspace{1cm}   +  O(\dx) \sum_{\eta=\pm} 
	        \|\varphi^{(k+1)}\|_{L^\infty(J_{i,q}^\eta)} \| u(y^\eta)^{(k+1)} \|_{L^{\infty}(J_{i,q}^\eta)},
	        \label{eq:Delta_k}
\ee
where we have denoted $\bar x_{i,q}:=\frac{1}{2} (x_{i,q} + x_{i,q+1})$, the center of the interval $J_{i,q}$. 
We use again Fa{\`a} di Bruno's formula \eqref{eq:FaadiBruno}. 
Let $h:=\sdt$. Using the fact that $y^\eta(x)=1 + \eta h \ms(x)$ and  $(y^\eta)^{(p)}=\eta h\ms^{(p)}(x)$ for $p\geq 1$ and $\eta=\pm$,
we obtain a Taylor expansion of the first term of $(u(y^\eta))^{(k+1)}$ in \eqref{eq:Delta_k} in terms of powers of $h$,
the first power being greater than one (as in Lemma~\ref{lem:estim_2}).
Hence, the contribution in $h$ in the first term of \eqref{eq:Delta_k} vanishes
(because of the sum over $\eta=\pm$), and there will remain only terms of the order of $h^2$, or of higher-order:
$$
    |\sum_{\eta=\pm}  (u(y^\eta))^{(k+1)}(\bar x_{i,q})| 
       \leq C h^2 \sum_{\eta=\pm} \sum_{p=1}^k \|u^{(p)}\|_{L^\infty(y^\eta(J_{i,q}^\eta))} 
       \leq C h^2 \sum_{\eta=\pm}  \frac{\|u\|_{L^2(y^\eta(J_{i,q}^\eta)) } }{\dx_{i,q}^{k+\ud}},
$$
from which follows 
\be
  \dx_{i,q}^{2k+3} |\Delta_{k+1}| 
    & \leq & C \dx_{i,q}^{2k+3} \frac{\| \varphi\|_{L^\infty(J_{i,q}^\eta)}}{\dx_{i,q}^{k+1+\ud}} (h^2 +h\dx)\frac{\|u\|_{L^2} }{\dx_{i,q}^{k+\ud}} \nonumber \\
    & \leq & C (h^2 \dx + h \dx^2)  \sum_{\eta=\pm}  \|\varphi\|_{L^2(J_{i,q}^\eta)} \| u\|_{L^2(y^\eta(J_{i,q}^\eta))}.
 e \label{eq:3.1secondpart}
\ee
Finally, summing the estimates of \eqref{eq:3.1firstpart} and \eqref{eq:3.1secondpart}, we obtain
\beno
  |\eps| & \leq &  C (\dt\dx + \sdt\dx^2)\  \sum_{\eta=\pm} \sum_{i,q}  \|\varphi\|_{L^2(J_{i,q}^\eta)} \| u\|_{L^2(y^\eta(J_{i,q}^\eta))} \\
         & \leq &  C (\dt\dx + \sdt\dx^2) \ 2 \| \varphi\|_{L^2}  \| u\|_{L^2}
\eeno
(as in Proposition~\ref{prop:2.1}$(i)$), which concludes the proof.
}\fi

\if{
Now we turn on the proof of $(iii)$. Indeed we proceed as in $(ii)$, using now the regularity of $\psi$. 
In the case when $\varphi \in V_{k+1}$, we find that 
\be 
    & & \hspace{-2cm}  (\dx^\eta_{i,q})^{2k+3} \|[\psi(y^\eta) \varphi]^{(2k+2)} \|_{L^\infty(J^\eta_{i,q})}  \nonumber \\
    & \leq &
   (\dx^\eta_{i,q})^{2k+3} \sum_{r\leq k+1} 
   \| \varphi^{(r)} \|_{L^\infty(J^\eta_{i,q})} \| (\psi(y^\eta))^{(2k+2-r)} \|_{L^\infty(J^\eta_{i,q})}  \nonumber \\ 
     & \leq & 
   (\dx^\eta_{i,q})^{k+1+1/2} \| \varphi \|_{L^2(J^\eta_{i,q})} M_{2k+2}(\psi) \nonumber 
\ee
Summing up this bounds and using 
$ \sum_{i,q} (\dx^\eta_{i,q})^{1/2} \| \varphi \|_{L^2(J^\eta_{i,q})} \leq \| \varphi \|_{L^2}$, 
we deduce, for each $\eta=\pm$, for all $\varphi \in V_{k+1}$,
\beno 
  \bigg| (\psi(y^\eta),\varphi)_{G^\eta} - (u(y^\eta),\varphi) \bigg| 
  \leq
  C \dx^{k+1}  M_{2k+2}(\psi)  \| \varphi \|_{L^2}. 
\eeno
Hence, by \eqref{eq:tildeSdtdef_nonconstant} and \eqref{eq:Sdtdef_nonconstant} (still valid for any $\varphi\in V_{k+1}$), 
we obtain the desired result.
In the case $\varphi \in V_k$, we obtain estimate
\eqref{eq:bound_prop31_iiib} in the same way.
}\fi

We now establish stability  properties.

\begin{proposition} \label{prop:3.2}
Let $k\geq 0$, and assume that $h$ is small enough in order that~\eqref{eq:inverse-1-cond} (resp. \eqref{eq:inverse-2-cond}) holds.
\\
$(i)$ (Stability with exact integration as in~\eqref{eq:SLDG_RK1}.)
For any $u \in V_k$, 
\beno
 \| S_\dt u \|_{L^2} \leq  (1+C\dt) \|u \|_{L^2},
\eeno
where $C\geq 0$ is a constant.\\
$(ii)$ (Stability with Gaussian quadrature rule as in~\eqref{eq:tildeSdtdef_nonconstant}.)
For any $u\in V_k$,
\beno
 \| \widetilde S_\dt u \|_{L^2} \leq  ( 1+ C \dt + C \sdt\dx^2) \| u\|_{L^2}.    
\eeno
$(iii)$ In particular the fully discrete schemes SLDG-1 and -2 are $L^2$ stable under the "weak" CFL condition 
\be
  \label{eq:weakCFL} \dx^4 \leq \ml \dt, \quad \mbox{for some $\ml>0$}.
\ee 
\end{proposition}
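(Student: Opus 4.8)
The plan is to reduce everything to an $L^2$ change-of-variables estimate for the elementary shift operators $u\mapsto u(y^q_\dt(\cdot))$, and to exploit the \emph{symmetry} of the weights $\ma_q$ around $q=0$ in order to cancel the dangerous $O(\sdt)$ contribution that would otherwise destroy the stability constant.

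First I would prove $(i)$. Since $\Pi$ is an $L^2$-orthogonal projection, $\|S_\dt u\|_{L^2}\le\|\sum_q\ma_q u(y^q_\dt)\|_{L^2}$, and since the weights are nonnegative with $\sum_q\ma_q=1$, convexity of $t\mapsto t^2$ gives $\|\sum_q\ma_q u(y^q_\dt)\|_{L^2}^2\le\sum_q\ma_q\|u(y^q_\dt)\|_{L^2}^2$. So it suffices to control $\sum_q\ma_q\|u(y^q_\dt)\|_{L^2}^2$. For each fixed $q$, condition \eqref{eq:inverse-1-cond} (resp.\ \eqref{eq:inverse-2-cond}) makes $y^q_\dt$ an increasing $1$-periodic diffeomorphism, so the change of variable $z=y^q_\dt(x)$ with periodic boundary conditions (exactly as in the proof of Proposition~\ref{prop:stab_b_nonconstant}$(i)$) yields $\|u(y^q_\dt)\|_{L^2}^2=\int_\mO u(z)^2 g^q(z)\,dz$, where $g^q(z)=1/(y^q_\dt)'(x^q(z))$ and $x^q:=(y^q_\dt)^{-1}$.

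The crucial point is the dependence of $g^q$ on powers of $\sdt$. Writing the two symmetric nodes as $y^{\pm}_\dt(x)=x\pm c(x)\sdt+O(\dt)$ with the \emph{same} function $c=\sigma$ for SLDG-1 and $c=\sqrt3\,\sigma$ for SLDG-2 (from \eqref{eq:y_euler}, resp.\ \eqref{eq:y_platen}), a Taylor expansion of the Jacobian together with $x^\pm(z)=z\mp c(z)\sdt+O(\dt)$ gives, uniformly in $z$, $g^\pm(z)=1\mp c'(z)\sdt+O(\dt)$, so that the two odd terms cancel in the sum: $g^+(z)+g^-(z)=2+O(\dt)$. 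Hence $\|u(y^+_\dt)\|_{L^2}^2+\|u(y^-_\dt)\|_{L^2}^2=2\|u\|_{L^2}^2(1+O(\dt))$, which only requires $\sigma\in\cC^2$ (true since $k\ge0$). For SLDG-2 the central node satisfies $y^0_\dt(x)=x+O(\dt)$ and carries no $\sdt$ term, so $\|u(y^0_\dt)\|_{L^2}^2=\|u\|_{L^2}^2(1+O(\dt))$ directly. Substituting these into $\sum_q\ma_q\|u(y^q_\dt)\|_{L^2}^2$ and using $\sum_q\ma_q=1$ gives $\|S_\dt u\|_{L^2}^2\le\|u\|_{L^2}^2(1+C\dt)$, which is $(i)$.

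Part $(ii)$ then follows at once from $(i)$ and Proposition~\ref{prop:3.1}$(i)$ (estimate \eqref{eq:bound_prop31_2}): $\|\widetilde S_\dt u\|_{L^2}\le\|S_\dt u\|_{L^2}+C\sdt\dx^2\|u\|_{L^2}\le(1+C\dt+C\sdt\dx^2)\|u\|_{L^2}$. Finally, for $(iii)$, the weak CFL condition \eqref{eq:weakCFL} gives $\dx^2\le\sqrt{\ml\dt}$, hence $\sdt\dx^2\le\sqrt\ml\,\dt$, so the one-step amplification factor in $(ii)$ is bounded by $1+C'\dt$; iterating over $n\le N=T/\dt$ steps yields $\|u^n\|_{L^2}\le(1+C'\dt)^n\|u^0\|_{L^2}\le e^{C'T}\|u^0\|_{L^2}$, the claimed stability. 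The main obstacle is precisely the cancellation at order $\sdt$: handling each node separately would leave $g^q=1\mp c'\sdt+\cdots$ and thus a per-step factor $1+C\sdt$, which blows up like $e^{C\sdt\,N}=e^{CT/\sdt}$. It is the symmetric placement and weighting of the Euler/Platen nodes that removes the $\sdt$ term and restores the correct $1+C\dt$ growth; everything else is a routine Jacobian expansion together with the nonexpansivity of $\Pi$.
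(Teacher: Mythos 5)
Your proof is correct and follows essentially the same route as the paper: convexity of $t\mapsto t^2$ to pull the sum over nodes inside the norm, the periodic change of variables $z=y^q_\dt(x)$ with a first-order expansion of the inverse Jacobian, cancellation of the $O(\sdt)$ term by the symmetry of the weights (the paper phrases this as $\sum_q q\,\ma_q=0$ where you pair the $\pm$ nodes, which is the same cancellation), then $(ii)$ from Proposition~\ref{prop:3.1} and $(iii)$ from $\sdt\,\dx^2\le\sqrt{\ml}\,\dt$ under \eqref{eq:weakCFL}. Your closing observation that a node-by-node estimate would leave a fatal $1+C\sdt$ factor is exactly the point the paper's argument is built around.
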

\proof
$(i)$
By making use of the convexity of $x\converge x^2$, the change of variable formula $x\converge y^q_\dt(x)$
(and denoting also $z\converge x^q_\dt(z)$ the inverse function of $y^q_\dt$),
we have
\beno
 \| S_\dt u \|_{L^2}^2 
  &  =   &  \int \bigg|\sum_{q} \ma_q  u(x+ b(x)\dt + q \ms(x) \sqrt{\dt}) \bigg|^2 dx \\
  & \leq &  \int \sum_{q} \ma_q \bigg| u(x+ b(x)\dt + q \ms(x) \sqrt{\dt}) \bigg|^2 dx \\
  &   =  &  \int \sum_q   \frac{\ma_q}{1+ b'(x^q(z))\dt + q \ms'(x^q(z))\sqrt{\dt}} |u(z)|^2 dz.
\eeno
Then we remark that $x^q_\dt(z)=x + O(\sqrt{\dt})$, 
so $1+ b'(x^q(z))\dt + q \ms'(x^q(z))\sqrt{\dt}  = 1 + q \ms'(x) \sqrt{\dt} + O(\dt)$, 
and for $\dt$ small enough  $0\leq (1+ b'(x^q(z))\dt + q \ms'(x^q(z))\sqrt{\dt})^{-1}  \leq  1 - q \ms'(x) \sqrt{\dt} + C\dt$
for some constant $C\geq 0$. 
Hence 
\beno
 \| S_\dt u \|_{L^2}^2 
  & \leq &  \int \sum_q   \ma_q (1 - q \ms'(x)\sqrt{\dt} + C \dt)  |u(z)|^2 dz \\
  & \leq &  (1+C\dt) \int |u(z)|^2 dz 
\eeno
where we have used that $\sum \ma_q=1$, and $\sum_q q \ma_q=0$.
The desired result follows.

$(ii)$ This is a consequence of $(i)$ and of the bound \eqref{eq:bound_prop31_2} of Proposition~\ref{prop:3.1}.
\endproof

The convergence result for the approximation of~\eqref{eq:heateq} is the following.

\begin{theorem}\label{th:order2_conv2}
Let $k\geq 0$ and let $\ms$ be a $1$-periodic function, of class $\cC^{2k+2}$.
We consider the schemes SLDG-p for $p=1,2$ (implementable version). 
\\
Assume the exact solution $v$ has a bounded derivative $\frac{\partial^{q} v}{\partial x^{q}}$ for $q=\max(2p+2,k+1)$,
and that the weak CFL condition \eqref{eq:weakCFL} is satisfied, then 
\be
  & & \hspace{-0.5cm} \| u^{n} - v^{n} \|_{L^2} \leq
   e^{L_1 T} \bigg( \|u^0-v^0\|_2  +   C T (\frac{\dx^{k+1}}{\dt} + \dt^p)   \bigg),
   \quad \forall n\leq N,
\ee
for some constant $L_1\geq 0$.
\end{theorem}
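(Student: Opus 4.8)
The plan is to reproduce, for the variable-coefficient diffusion case, the three-part structure already used in the proofs of Theorems~\ref{th:advect_conv} and~\ref{th:order2_conv1}: a local consistency estimate for the weak-Taylor step, transfer of that estimate from the exact-integration operator to the implementable operator $\widetilde S^{(p)}_\dt$ via Proposition~\ref{prop:3.1}, and a discrete Gr\"onwall iteration driven by the stability bound of Proposition~\ref{prop:3.2}. I will argue for a generic $p\in\{1,2\}$, writing $S^{(p),0}_\dt u:=\sum_{q}\ma_q u(y^q_\dt(\cdot))$ for the semi-discrete operator before projection, so that $S^{(p)}_\dt=\Pi S^{(p),0}_\dt$, and taking $r=0$, $f=0$ for the leading analysis (the reaction factor $e^{-r\dt}$ and the source correction~\eqref{eq:whx-correction} change none of the orders).

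First I would establish the local consistency. The weights $\ma_q$ and characteristics $y^q_\dt$ of~\eqref{eq:SLDG_RK1}--\eqref{eq:y_platen} are exactly the order-$p$ weak Taylor approximations (weak Euler for $p=1$, Platen for $p=2$) of the diffusion $dX_s=b\,ds+\ms\,dW_s$, so the weak-order-$p$ property gives $\E[v^n(X^{0,x}_\dt)]=S^{(p),0}_\dt v^n(x)+O(\dt^{p+1})$ for smooth data (see~\cite[Chapter 14]{klo-pla-95}), where the assumed boundedness of $\partial^q_x v$ up to $q=2p+2$ is used. Since the Feynman--Kac formula~\eqref{eq:expect-formula} gives $v^{n+1}(x)=\E[v^n(X^{0,x}_\dt)]$ exactly, this yields $v^{n+1}=S^{(p),0}_\dt v^n+O(\dt^{p+1})$ in $L^2$. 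Applying the $L^2$-contraction $\Pi$, the projection error $\|v^{n+1}-\Pi v^{n+1}\|_{L^2}=O(\dx^{k+1})$ from Lemma~\ref{lem:proj}, and Proposition~\ref{prop:3.1}$(iii)$ applied to the regular function $\psi=v^n$ to pass from $S^{(p)}_\dt$ to $\widetilde S^{(p)}_\dt$, I obtain the fully discrete consistency estimate
$$ v^{n+1}=\widetilde S^{(p)}_\dt v^n+O(\dt^{p+1})+O(\dx^{k+1}). $$

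Subtracting the scheme $u^{n+1}=\widetilde S^{(p)}_\dt u^n$ and using the linearity of $\widetilde S^{(p)}_\dt$ gives the error recursion $v^{n+1}-u^{n+1}=\widetilde S^{(p)}_\dt(v^n-u^n)+O(\dt^{p+1})+O(\dx^{k+1})$. The decisive point is controlling the amplification factor: by Proposition~\ref{prop:3.2}$(ii)$, $\|\widetilde S^{(p)}_\dt w\|_{L^2}\le(1+C\dt+C\sdt\dx^2)\|w\|_{L^2}$, and here the weak CFL condition~\eqref{eq:weakCFL} is essential, since $\dx^4\le\ml\dt$ forces $\dx^2\le\sqrt{\ml}\,\sdt$, hence $\sdt\dx^2\le\sqrt{\ml}\,\dt$, so the factor collapses to $1+C'\dt\le e^{L_1\dt}$. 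This gives
$$ \|v^{n+1}-u^{n+1}\|_{L^2}\le e^{L_1\dt}\|v^n-u^n\|_{L^2}+C\dt^{p+1}+C\dx^{k+1}. $$

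Finally I would iterate this recursion by discrete Gr\"onwall: summing the geometric amplification over the $N=T/\dt$ steps yields $\|v^n-u^n\|_{L^2}\le e^{L_1 t_n}\|v^0-u^0\|_{L^2}+e^{L_1 T}N(C\dt^{p+1}+C\dx^{k+1})$, and substituting $N=T/\dt$ converts the accumulated term into $e^{L_1 T}\,CT(\dt^p+\dx^{k+1}/\dt)$, which is the claimed bound. I expect the main obstacle to be the variable-coefficient consistency of the first step: unlike the constant-$\ms$ case of Theorem~\ref{th:order2_conv1}, the characteristics~\eqref{eq:y_platen} involve nested evaluations $\ms(\mg^{\pm1}_h)$ and $b(\mg^{\sqrt3 q}_h)$, so checking that the weighted sum reproduces the semigroup generated by $\cA=\tfrac{\ms^2}{2}\partial_{xx}+b\partial_x$ up to $O(\dt^{p+1})$ requires the full weak-Taylor expansion rather than the elementary Taylor matching; this is, however, precisely the content of Platen's scheme and can be invoked from~\cite{klo-pla-95}.
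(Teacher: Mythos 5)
Your proposal is correct and follows essentially the same route as the paper's proof: consistency of the order-$p$ weak Taylor step invoked from \cite{klo-pla-95}, transfer to the implementable operator via Proposition~\ref{prop:3.1}$(iii)$, and induction on the error recursion using the stability of $\widetilde S_\dt$ from Proposition~\ref{prop:3.2}. If anything, you are more explicit than the paper in showing exactly where the weak CFL condition \eqref{eq:weakCFL} converts the $\sdt\dx^2$ term in the stability constant into $C\dt$, which the paper leaves implicit in its appeal to the stability estimate.
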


In particular for $\dt=\lambda \dx$ for any $\lambda >0$,
and $k=p \in \{1,2\}$, the SLDG-p schemes are fully discrete schemes and of order $O(\dx^p)$.

\newcommand{\point}{\bullet}

\proof[Proof of Theorem~\ref{th:order2_conv2}.]
\if{
\fbox{\bf OLD WAY !}

On the other hand, we have supplementary terms involving operators $D_h$, times a regular function, times a factor $\dt$. 
We first remark that  $\| D_h u \|_{L^2} \leq \|u\|_{L^2}$.
Then, let us define, for a given (continuous) function $a$,   the operator $a\point u$ in $V_k$ by
\be \label{eq:apoint}
   (a\point u)(x^i_\ma) :=  (au)(x^i_\ma), \quad \forall i,\ma.
\ee
One can establish using similar estimates as in the proof of Proposition~\ref{prop:2.1}, if $a$ is in $C^{2k+2}$ and is a
$1$-periodic function:
\be
    a\point u =  au  + O (M_{2k+2}(a)\, \dx^2 \| u \|_{L^2}), \quad \forall u \in V_k, 
    \label{eq:apointu.a}
\ee
and, if furthermore $\psi\in C^{k+1}$, $1$-periodic,
\be
    a\point \psi =  a\psi  + O (\dx^{k+1} M_{k+1}(a\psi)).
    \label{eq:apointu.b}
\ee
For the SLDG-RK2 scheme,  denoted $u^{n+1}=S^{RK2} u^n$, by using the estimate~\eqref{eq:apointu.a},
all supplementary corrections are controlled by (at most) $C \dt \|u^n \|_{L^2}$ for some constant $C\geq0$. 
In the end we obtain a bound of the form
$\| u^{n+1}  \|_{L^2} \leq (e^{L \dt} + C\dt) \|u^n\|_{L^2}$, which is enough to conclude to the stability.
}\fi
We first consider the SLDG-1 scheme $u^{n+1}=\widetilde S_\dt u^n$.
By making use  of the consistency error estimate, we have
\be 
  v^{n+1} = \Pi S^0_\dt v^n + O(\dt^2) + O(\dx^{k+1}) =  S_\dt v^n + O(\dt^2) + O(\dx^{k+1}).
\ee
Furthermore, by proposition \ref{prop:3.1}$(iii)$,
\be
  \| \widetilde S_\dt v^n - S_\dt v^n \|_{L^2} \leq C M_{k+1}(v^n) \dx^{k+1}.
\ee
Hence
\be
  v^{n+1} = \widetilde S_\dt v^n + O(\dt^2) +  O(\dx^{k+1}),
\ee
and by difference with the scheme $u^{n+1}=\widetilde S_\dt u^n$:
\be 
   \| u^{n+1} - v^{n+1}  \| & = &  \| \widetilde S_\dt u^n - \widetilde S_\dt v^n  \|_{L^2}  + C (\dt^2  + \dx^{k+1}) \\
    & \leq &  e^{C \dt} \| u^n - v^n  \|_{L^2}  + C (\dt^2  + \dx^{k+1}),
\ee
for some constant $C\geq 0$,
where we have made use of the stability estimate for $\widetilde S_\dt$.
Therefore  we obtain the desired error bound.

For the SLDG-2 scheme, the estimates are similar,
using the fact Platen's scheme is second-order to get the consistency estimate
$v^{n+1} = S^{(2)}_\dt v^n  + O(\dt^3) + O(\dx^{k+1})$. The conclusion follows.
\endproof

\if{
Let $v^n(x)=v(t_n,x)$, $\tilde v^{n+1} := \Pi_{V_{k+1}} v^{n+1}$ and $\tilde e^{n+1}:=u^{n+1} - \tilde v^{n+1}$.
We have
\be
  \| \tilde e^{n+1} \|_{L^2}^2  
  & = & ( \widetilde S_{\dt} u^n, \tilde e^{n+1} ) -  (\tilde v^{n+1}, \tilde e^{n+1})  \nonumber \\
  & = & ( \widetilde S_{\dt} u^n - \widetilde S_{\dt} v^n, \tilde e^{n+1} )  \nonumber \\
  &   &    \hspace{0cm} +\ ( \widetilde S_{\dt} v^n - S_\dt v^n, \tilde e^{n+1} )  \nonumber \\
  &   &    \hspace{0cm} +\ ( S_\dt v^n - v^{n+1}, \tilde e^{n+1} )  \nonumber \\
  &   &    \hspace{0cm} +\ (v^{n+1} - \tilde v^{n+1},\ \tilde e^{n+1}). 
       \label{eq:aaa}
\ee
By making use of the consistency error estimate, we have $v^{n+1} = S_\dt v^n + O(\dt^2)$ and therefore
\be
  |( S_\dt v^n - v^{n+1}, \tilde e^{n+1} )|\leq  C \dt^2 \|\tilde e^{n+1}\|_{L^2}. \label{eq:th22b1}
\ee
The other terms of the R.H.S.\ of \eqref{eq:aaa}
are bounded in a similar way as in the proof of Proposition~\ref{prop:2.1}. Using now Proposition~\ref{prop:3.1} we obtain here:
\be
 & & \hspace{-1cm}  ( \widetilde S_{\dt} u^n - \widetilde S_{\dt} v^n, \tilde e^{n+1} ) 
   \leq (S_{\dt} u^n -  S_{\dt} v^n, \tilde e^{n+1} )  \nonumber \\
 & &  + C \sdt \dx \| u^n - v^n \|_{L^2} \| \tilde e^{n+1}\|_{L^2}  + C \dx^{k+2} M_{2k+3}(\psi)\|\tilde e^{n+1}\|_{L^2} \label{eq:th22b2}\\
 & & \hspace{-1cm}  ( \widetilde S_{\dt} v^n - S_\dt v^n, \tilde e^{n+1} )  \leq   C M_{2k+2}(v^n) \dx^{k+1} \|\tilde e^{n+1}\|_{L^2}, \label{eq:th22b3}
\ee
and 
\be
 & & \hspace{-1cm}  (v^{n+1} - \tilde v^{n+1},\ \tilde e^{n+1}) \leq C \dx^{k+2} \|\tilde e^{n+1}\|_{L^2}.\label{eq:th22b4}
\ee
Combining the bounds \eqref{eq:th22b1},\eqref{eq:th22b2},\eqref{eq:th22b3}, \eqref{eq:th22b4},  and the stability bound for $\widetilde S_\dt$, 
we obtain
\be
   \| u^{n+1} - v^{n+1} \|_{L^2} \leq (e^{C\dt} + C \sdt \dx)  \| u^n - v^n \|_{L^2}  + C\dx^{k+1} + C \dt^2.
\ee
For $\dx\leq \sqrt{\ml} \sdt$ we have $C \sdt \dx \leq C\sqrt{\ml} \dt=C_1 \dt$ and therefore 
\be
   \| u^{n+1} - v^{n+1} \|_{L^2} \leq e^{C_1 \dt} \| u^n - v^n \|_{L^2}  + C\dx^{k+1} + C \dt^2.
\ee
We then conclude to the desired error bound.
}\fi


\if{
we first notice that we have, by construction
and using~\eqref{eq:approx_uxxx0} and~\eqref{eq:approx_uxx}:
\be
  & & v^{n+1} = \frac{1}{3} (v^n + S^0_{\dt} v^n  + S^0_{\dt} S^0_\dt v^n)  \nonumber \\
  & & \hspace{1cm}  
   + \dt \frac{\ms^2 (\ms^2)'}{12}  \big(  \Sott \Sott \Sott v^n - \frac{1}{2} \Sott\Sott\Sott\Sott\Sott v^n \big)  \nonumber\\
  & & \hspace{1cm}  
   + \dt \frac{\ms^2 (\ms^2)''}{24} \Sotd \Sotd v^n  + O (\dt^3) \label{eq:modified_SLDG_RK2_vn1}.
\ee
where $O(\dt^3)$ is a term bounded by some constant $C$ that depends of
$\| v_{ttt} \|_{L^\infty}$, $\| v_x^{(3)}\|_{L^\infty}$ and $\| v_x^{(7)} \|_{L^\infty}$.
As already remarked, using the regularity of $S^0_{\dt} v^n$  
we have $S_{\dt} v^n \equiv \Pi S^0_{\dt} v^n = S^0_{\dt} v^n + O(\dx^{k+1})$ as well
as $\Pi S^0_{\dt} \Pi S^0_{\dt} v^n = \Pi S^0_{\dt} S^0_{\dt} v^n  + O(\dx^{k+1})$. Therefore,  
\be
  & & v^{n+1} = \frac{1}{3} (v^n + S_{\dt} v^n  + S_{\dt} S_\dt v^n)  \nonumber \\
  & & \hspace{1cm}  
   + \dt \frac{\ms^2 (\ms^2)'}{12}  \big(  \Sott \Sott \Sott v^n - \frac{1}{2} \Sott\Sott\Sott\Sott\Sott v^n \big)  \nonumber\\
  & & \hspace{1cm}  
   + \dt \frac{\ms^2 (\ms^2)''}{24} \Sotd \Sotd v^n  + O (\dt^3) + O(\dx^{k+1}) \label{eq:modified_SLDG_RK2_vn2}.
\ee
Next, we establish the following estimates:
\be\label{eq:estim1}
  S_{\dt} v^n  = \widetilde S_{\dt} v_n + O(\dx^2 (\dt+ \dx^2) \|v^n\|)
\ee
and
\be\label{eq:estim2}
  S_{\dt} S_{\dt} v^n  = \widetilde S_{\dt} \widetilde S_{\dt} v_n + O(\dx^2 (\dt+ \dx^2) \|v^n\|).
\ee
\centerline{\fbox{\bf {FINIR}}}
}\fi

\if{
\begin{rem}
In the case of 
\be
  & & v_t  - \frac{\sigma^2}{2} v_{xx}  - b v_x + r v = 0, \quad x\in \mO,\ t\in (0,T),\\
  & & v(0,x) = v_0(x), \quad x \in \Omega 
\ee
a direct scheme is
$$
  u^{n+1}  =  e^{-r\dt}\ \Pi \frac{1}{2} \bigg( u^n( \cdot + b \dt  + \sigma \sqrt{\dt}) + u^n( \cdot + b \dt  - \sigma \sqrt{\dt}) \bigg)  
$$
However this scheme is only first-order in time (when $b\neq0$, $\sigma\neq 0$) and we do not know how to directly improve this order. 
\end{rem}
}\fi

\section{Extension to two-dimensional PDEs and splitting strategies} 

\subsection{First-order PDEs - two-dimensional case}\label{sec:4.1}
We aim to extend the previous scheme to treat two-dimensional PDEs, by using splitting strategies and 
one-dimensional solvers of the previous section for advection in the direction of the coordinate axes.

Let $\mO$ be a  square box domain $\mO=[\xumin,\xumax]\times[\xdmin,\xdmax]$ with periodic boundary conditions.
Let us consider a spatial discretization of $\mO$ into cells $I_{i,j}:=I_i\times J_j$ where $I_i$ (resp. $J_j$)
is a cell  discretization of $[\xumin,\xumax]$ (resp.  $[\xdmin,\xdmax]$)
as in the one-dimensional case using $M_1$ (resp. $M_2$) points.
We define the corresponding space of 2d discontinuous Galerkin elements by using the $Q_k$ basis
($v\in Q_k$ if $v(x)=\sum_{i,j\leq k} v_{ij} x_1^i x_2^j$):
\be \label{eq:DGspace_2d}
  & & V_k^{(2)} := \bigg\{v \in L^2(\mO, \R),\, v|_{I_{i,j}} \in Q_k,\ 
  \forall (i,j)
  \bigg\}.
\ee

We consider the case of 
\be\label{eq:b1-b2}
  u_t + b_1(x_1,x_2) u_{x_1} + b_2(x_1,x_2) u_{x_2} = 0, \quad (x_1,x_2)\in \mO.
\ee
The idea, already proposed in \cite{ros-sea-2011} or \cite{cro-meh-vec-2010} is to split the equation into 
\be\label{eq:adv-1}
  u_t + b_1(x_1,x_2) u_{x_1} = 0, \quad (x_1,x_2)\in \mO
\ee
and 
\be\label{eq:adv-2}
  u_t + b_2(x_1,x_2) u_{x_2} = 0, \quad (x_1,x_2)\in \mO.
\ee
Let the corresponding characteristics $X^q_{(x_1,x_2)}(t)$ be defined by :
\begin{itemize}
\item for $q=1$:
  $X^1_{(x_1,x_2)}(t)=(y_1(t),x_2)$ where
 $$\mbox{ $y_1(t)$ is the solution of $\dot y_1(t) = b_1(y_1(t),x_2)$ with $y_1(0)=x_1$,}$$
\item 
  for $q=2$: $X^2_{(x_1,x_2)}(t)=(x_1,y_2(t))$ where
 $$\mbox{$y_2(t)$ is the solution of $\dot y_2(t) = b_2(x_1,y_2(t))$ with $y_2(0)=x_2$.} $$
\end{itemize}
Let $\cE^q_t$ be the corresponding exact evolution operator in the direction of $x_q$.
The exact solution of \eqref{eq:adv-1}, with $q=1$ (resp. \eqref{eq:adv-2}, with $q=2$) satisfies
$$  
  v^{n+1}(x_1,x_2) = v^n(X^q_{(x_1,x_2)}(-\dt)) = \cE^q_\dt(v^n)(x_1,x_2).
$$

We define the discrete evolution operator for \eqref{eq:adv-1}, denoted $\tilde\cT^1_{b_1, \dt}$, so that 
for each fixed Gauss points $x_2=x^i_\ma$ the one-dimensional scheme is used for the evolution in the direction $x_1$.
We define in the same way the operator $\tilde\cT^2_{b_2,\dt}$ for the approximation of \eqref{eq:adv-2}.


\begin{rem} In the case of \eqref{eq:adv-1}
we do not try to compute precisely the $2d$ integrals
\be\label{eq:integrals-1}
  \int_{I_i\times J_j} u^n(X^1_{(x_1,x_2)}(-\dt))\, \varphi_1(x_1) \varphi_2(x_2) dx_1 dx_2,
\ee
where $\varphi_1$ and $\varphi_2$ are polynomial basis functions. The discontinuities of the
integrand are no longer well localized and it would not be possible to obtain easily an accurate approximation 
for~\eqref{eq:integrals-1}.
Rather, the discrete scheme computes a high-order approximation of the following integrals on a full band $[0,1]\times J_j$
\be\label{eq:integrals-2}
  \int_{[0,1]\times J_j} u^n(X^1_{(x_1,x_2)}(-\dt))\, \varphi_1(x_1) \varphi_2(x_2) dx_1 dx_2,
\ee
and this is all what is needed.
\end{rem}


Now, the results of Section 2, in particular Propositions~\ref{prop:2.1} and \ref{prop:stab_b_nonconstant},
can be extended to the operators $\tilde\cT^q_{b_q,\dt}$, $q=1,2$. The difference is now that the consistency estimates 
are typically as follows, for $q=1,2$:
$$
  \| \cE^q_{\dt} \varphi - \tilde\cT^q_{b_q,\dt} \varphi\|_{L^2} \leq C \dt^2 \dx_q^{k+1} \|\varphi\|_{L^2}, \quad \forall \varphi \in V_k^{(2)},
$$
and
$$
  \| \cE^q_{\dt} \psi - \tilde\cT^q_{b_q,\dt} \psi\|_{L^2} \leq C(\psi) \dx_q^{k+1}, \quad \forall \psi \in C^{k+1}.
$$






Let furthermore  $\cE_t$ be the evolution operator for the initial advection problem~\eqref{eq:b1-b2}.
In the case when $b=(b_1,b_2)$ is constant we have  
$$ 
  \cE_{\dt} = \cE^{2}_{\dt} \cE^{1}_{\dt}
$$
and we can therefore approximate the exact evolution 
$\cE_{\dt} v^n$ by $\cT^{2}_{b_2,\dt} \cT^{1}_{b_1,\dt} u^n$ with no error coming from the splitting.

In the following, when there is no ambiguity, we furthermore denote
$$ \cT^{q}_{\dt} = \cT^{q}_{b_q,\dt}  \quad q=1,2.
$$ 

In the case when $b=(b_1,b_2)$ is non-constant,
we recall the following approximations of the exponential $e^{(A+B)\dt}$ 
for $A$ and $B$ matrices and for small $\dt$:
\FULL{
\be \label{eq:exp-trotter}
   e^{(A+B)\dt}= e^{B\dt} e^{A\dt} + O(\dt^2) \quad \mbox{(Trotter spitting)},
\ee
and
\be \label{eq:exp_strang}
   e^{(A+B)h}= e^{B\frac{h}{2}} e^{Ah} e^{B\frac{h}{2}} + O(h^3) \quad \mbox{(Strang's spitting)},
\ee
as well as 
\be 
   e^{(A+B)h} 
  & = & \frac{2}{3} (e^{A\frac{h}{2}} e^{Bh} e^{A\frac{h}{2}} + e^{B\frac{h}{2}} e^{Ah} e^{B\frac{h}{2}})
          - \frac{1}{6}(e^{Bh} e^{Ah}  + e^{Ah} e^{Bh})  \nonumber\\
  &   & \hspace{5cm} +  O(h^4)  
  \label{eq:exp-o3}
\ee
and
\be \label{eq:exp-o4}
   e^{(A+B)h}= \frac{4}{3} e^{A\frac{h}{4}} e^{B\frac{h}{2}} e^{A\frac{h}{2}} e^{B\frac{h}{2}} e^{A\frac{h}{4}} 
    - \frac{1}{3} e^{A\frac{h}{2}} e^{Bh} e^{A\frac{h}{2}} +  O(h^5) 
\ee
(see for instance \cite{bid-2010}).
}
\SHORT{
\be 
   & &
     \label{eq:exp-trotter}
     e^{(A+B)\dt}= e^{B\dt} e^{A\dt} + O(\dt^2) \quad \mbox{(Trotter spitting)},\\
   & & 
     \label{eq:exp_strang}
     e^{(A+B)\dt}= e^{B\frac{\dt}{2}} e^{A\dt} e^{B\frac{\dt}{2}} + O(\dt^3) \quad \mbox{(Strang's spitting)}.
\ee
}
leading us to consider the following splitting approximations  
\newcommand{\dtun}{\dt}
\newcommand{\dtsd}{\frac{\dt}{2}}
\newcommand{\dtsq}{\frac{\dt}{4}}
\be \label{eq:split-o1}
  & &  \cT_{b\dt} \simeq \cT^{2}_{\dt} \cT^{1}_{\dt}  \quad \mbox{(Trotter)} \\
  & & \label{eq:split-o2}
   \cT_{b\dt} \simeq \cT^1_{\dtsd} \cT^2_{\dt} \cT^{1}_{\dtsd}  \quad \mbox{(Strang)}
\ee
of expected consistency error $O(\dt)$ and $O(\dt^2)$ respectively.%
\footnote{Denoting $\tau=T/N$ for $N\geq 1$, and $q\geq0$, if linear operators $A_\tau$ and $B_\tau$ on a normed vector space satisfy 
$A_\tau = B_\tau + O(\tau^{q+1})$, with $\|A_\tau^n\|,\|B_\tau^n\|\leq C$ for all $0\leq n\leq N$, then 
$A_\tau^N = B_\tau^N + O(\tau^{q})$.}
These last two splitting schemes are similar to the ones used in~\cite{Qiu_Shu_2011}.


Following~\cite{ros-sea-2011}, we shall also consider a $3$rd-order splitting scheme 
of Ruth~\cite{Ruth_1983}, a $4$th-order splitting scheme of Forest~\cite{Forest_1987}
(see also Forest and Ruth~\cite{Forest_Ruth_1990}), as well as a $6$th-order splitting of Yoshida~\cite{Yoshida_1993}).

\underline{Ruth's $3$rd-order splitting:} 
\be \label{eq:split-o3}
    \cT_{b \dt} \simeq 
    \cT^1_{c_1 \dtun} \cT^2_{d_1 \dtun} 
    \cT^1_{c_2 \dtun} \cT^2_{d_2 \dtun} 
    \cT^1_{c_3 \dtun} \cT^2_{d_3 \dtun},
\ee
with 
$$
  c_1=7/24,\ c_2= 3/4,\ c_3=-1/24 \quad \mbox{and} \quad 
  d_1=2/3, \ d_2=-2/3,\ d_3=1.
$$

\underline{Forest's 4th-order splitting:}
\be \label{eq:split-o4}
  \cT_{b \dt}  \simeq 
    \cT^1_{\mg_1 \dtsd} 
    \cT^2_{\mg_2 \dtun} 
    \cT^1_{(\mg_1+\mg_2)\dtsd} 
    \cT^2_{\mg_2\dtun}
    \cT^1_{(\mg_1+\mg_2)\dtsd} 
    \cT^2_{\mg_2 \dtun}
    \cT^1_{\mg_1 \dtsd},
\ee
with
$$
  \mg_1:=\frac{1}{2-2^{1/3}}  \quad \mbox{and} \quad \mg_2= - \frac{2^{1/3}}{2-2^{1/3}}.
$$

\underline{Yoshida's 6th-order splitting:} 
\be
   \label{eq:split-o6}
   \cT_{b \dt}  \simeq  \cT^{4th}_{y_1 \dt} \cT^{4th}_{y_2 \dt} \cT^{4th}_{y_1 \dt},
\ee
where $\cT^{4th}_{\dt}$ denotes the previous Forest's $4$th-order approximation method,
$$ 
  y_1:=\frac{1}{2-2^{1/5}} \quad \mbox{and}\quad  y_2:=-\frac{2^{1/5}}{2-2^{1/5}}.
$$

\begin{rem}
Stability in the $L^2$-norm is then easily obtained.
Indeed, we have the $L^2$-stability of the one-directional advection operators $\cT^k_{\dt}$,
that is, for variable coefficients
\be\label{eq:tau_stab_exp}
  \|\cT^k_{\dt} u\|_{L^2}\leq e^{c\dt} \| u\|_{L^2}
\ee
for some constant $c$.
Then, for instance for the Trotter splitting, we have
$\|\cT^1_{\dt} \cT^2_{\dt} u\|_{L^2}\leq e^{2c\dt} \| u\|_{L^2}$, which gives the $L^2$ stability result
\be 
  \|(\cT^1_{\dt} \cT^2_{\dt})^n u\|_{L^2}\leq e^{2c t_n} \| u\|_{L^2}.
\ee
In the same way any finite product of operators of the form of $\cT^k_{\ma_k \dt}$ (or any convex 
combination of such products) would lead to stable schemes.
\end{rem}

Hence the results of Section 2 can be extended:
for $\ma=1,2,3,4$ and $6$ corresponding to the splittings
\eqref{eq:split-o1}, \eqref{eq:split-o2}, \eqref{eq:split-o3}, \eqref{eq:split-o4} and \eqref{eq:split-o6}  respectively,
for regular solutions, the one time step error will be of order 
\be\label{eq:one-time-step-error}
  O(\dt^{\ma+1}) + O(\dx^{k+1}),
\ee
and the convergence error bound after $N$ time steps will be of order 
\be\label{eq:N-time-step-error}
  O(\dt^\ma) + O(\frac{\dx^{k+1}}{\dt}).
\ee

\subsection{Second-order PDEs - two-dimensional case}\label{sec:4.2}
We consider the case of
\be 
   & & u_t - \ud Tr(\sigma(x) \sigma(x)^T D^2 u) + b(x)\cdot \nabla u =  f(t,x), \quad x\in \mO,\ t\in(0,T) \nonumber \\
   & & 
    \label{eq:o2} 
\ee
(with initial condition $u(0,x)=u_0(x)$),
where $\sigma(x)\in \R^{2\times 2}$ and $Tr(A)$ denotes the trace of the matrix $A$.

We introduce the following decomposition into the direction of diffusions represented by the column vectors of the matrix $\ms$
(similar decompositions have been used by Kushner and Dupuis~\cite{kus-dup-92}, Menaldi~\cite{Menaldi_1989}, 
Camilli and Falcone~\cite{Camilli_Falcone_95}, 
Debrabant and Jakobsen~\cite{Debrabant_Jakobsen_2012}, etc.):
$$
  \sigma\sigma^T= \sum_{q=1}^2 \ms_q \ms_q^T, \quad 
   \mbox{where}\ \ms_q:=\VECT{\sigma_{1,q} \\ \sigma_{2,q}}.
$$

Setting $B_1=\VECT{b_1\\0}$ and $B_2=\VECT{0\\b_2}$, we write~\eqref{eq:o2} as follows:
\be \label{eq:o2_all}
   u_t + \sum_{q=1,2} \bigg( - \frac{1}{2}  Tr(\ms_q \ms_q^T D^2 u) + B_q \cdot \nabla u \bigg)  = f(t,x). 
\ee

Let us first consider the one-directional problem (one direction of diffusion):
\be \label{eq:o2_q}
   u_t - \frac{1}{2} Tr (\ms_q \ms_q^T D^2 u) + B_q \cdot\nabla u = 0.
\ee 
For this subproblem we consider weak Taylor schemes exactly as for the one-dimensional 
SLDG-1 and SLDG-2 schemes \eqref{eq:y_euler}-\eqref{eq:SLDG_RK1} 
and \eqref{eq:SLDG-2}-\eqref{eq:y_platen}.
Indeed these approximations are known to be also of order $1$ and $2$ in time for \eqref{eq:o2_q} in any dimension \cite{klo-pla-95}.

It remains to give the definition of a scheme, of sufficient order,
for the approximation in two dimensions for terms of the form
\be\label{eq:proj-2d}
  \Pi(u^n(y^q_\dt(\cdot))) 
\ee
where $\Pi$ is the projection on $V^{(2)}_k$ and $y^q_\dt(x)$ is now a vector of $\R^2$.

\begin{rem}\label{rem:general_splittings}
\MODIF{
In view of the definition of the characteristics \eqref{eq:y_euler} or \eqref{eq:y_platen},
}
a typical problem is to compute accurately the projection on $V^{(2)}_k$ of a function of the form
\be\label{eq:proj-2d-typical}
   (x_1,x_2) \converge u^n(f_1(h,x_1,x_2),f_2(h,x_1,x_2)),
\ee 
with $h=\sqrt{\dt}$,
where $f_1$ and $f_2$ are regular functions with known expressions, and such that
\be\label{eq:fassump}
  \mbox{$f_1(0,x_1,x_2)= x_1$ and $f_2(0,x_1,x_2)=x_2$.}
\ee
A high-order approximation of the term \eqref{eq:proj-2d}, or \eqref{eq:proj-2d-typical} in the general case
can be obtained by using the PDE satisfied by $v(s,x_1,x_2):=u^n(f_1(s,x_1,x_2),f_2(s,x_1,x_2))$.

\MODIF{
More precisely, 
assuming that $u^n$ is a regular function, we observe that 
$\partial_s v = \<\partial_s f,\ \nabla u^n(f_1,f_2)\>$ and $\nabla v= Df^T\, \nabla u^n(f_1,f_2)$ 
(where $Df:=(\frac{\partial f_i}{\partial x_j})$ and $\nabla u= (\frac{\partial u}{\partial x_i})$).
Therefore 
$\partial_s v = \<\partial_s f,\ (Df^T)^{-1} \nabla v\> = \<Df^{-1} \partial_s f,\,\nabla v\>$ and $v$ is solution of the PDE
\begin{subequations}
\be
 & &  \partial_s v - \<Df^{-1} \partial_s f,\,\nabla v\> = 0, \quad s>0, \\
 & &  v(0,x_1,x_2)=u^n(x_1,x_2) 
\ee
\end{subequations}
(the matrix inverse $Df(s,x_1,x_2)^{-1}$ is well defined for small $s\geq 0$
since by the assumptions \eqref{eq:fassump} we have $Df(0,x_1,x_2)=Id$).
Then we have a problem of the form \eqref{eq:b1-b2} and we can apply the splitting approaches of Section~\ref{sec:4.1} to obtain
a high-order approximation of \eqref{eq:proj-2d-typical} on a DG basis.
}
\end{rem}

\MODIF{
\begin{rem}\label{rem:special_splittings}
In the present work we will consider only numerical examples involving terms of the form $\Pi u^n(f_1(h,x_1,x_2),x_2)$ 
or  $\Pi u^n(x_1,f_2(h,x_1,x_2))$ (i.e. $f_2(h,x_1,x_2)\equiv x_2$, or $f_1(h,x_1,x_2)\equiv x_1$),
or of the form $\Pi u^n(f_1(h,x_1),f_2(h,x_2))$ with regular functions $f_1$ and $f_2$ and $h=\sqrt{\dt}$.
For such cases, the one-dimensional discretization can be extended to two dimensions by straightforward splitting.
\end{rem}
}

Finally, for the general case of \eqref{eq:o2}, we define the scheme by using Strang's splitting of the one time-step evolution operators 
for \eqref{eq:o2_q} and by adding the correction \eqref{eq:whx-correction} for the source term.


\if{
\be\label{eq:y_euler}
  \mg^q_h(x)  & := & x + b(x)h +   q  \ms(x) \sqrt{h}.
\ee

Our SLDG-1 scheme, corresponding to a first-order (weak Euler scheme), is defined by 
\be \label{eq:SLDG_RK1}
  u^{n+1}\equiv S^{(1)}_\dt u^n := \Pi \bigg( \sum_{q=\pm 1} \ma_q u^n(y^q_\dt(\cdot))\bigg)
\ee
with weights $\ma_{-1}=\ma_1=\frac{1}{2}$ and characteristics $y^q_h=\mg^q_h$.

Our SLDG-2 scheme for variable coefficients, corresponding to the second-order Platen's scheme, is defined by
\be \label{eq:SLDG-2-gen}
  u^{n+1}\equiv S^{(2)}_\dt u^n := \Pi \bigg( \sum_{-1\leq q\leq1 } \ma_q u^n(y^q_\dt(\cdot))\bigg)
\ee
with weights $\ma_{-1}=\ma_1=\frac{1}{6}$ and $\ma_0=\frac{2}{3}$ and characteristics $y^q_h=y^q_h(x)$ defined by:
\be
  y^q_h(x) 
  &  = &  x + \frac{1}{2}(b(\mg^{\sqrt{3} q}_h)+b)\,h   
    \label{eq:y_platen}
  \\
  & & 
  + \frac{1}{4}\bigg[\big( \ms(\mg^1_h) + \ms(\mg^{-1}_h)+ 2\ms\big) \sqrt{3}\ q 
  + \big(\ms(\mg^1_h) - \ms(\mg^{-1}_h)\big) (3q^2-1) \bigg]
    \sqrt{h}.
    \nonumber
\ee
}\fi

\if{
%
%
\paragraph{\bf Case of a constant diffusion matrix $\sigma$:} 
In that case, $\mS_k(x)\equiv \mS_k$ and equation \eqref{eq:o2_all} can be splitted exactly into equations \eqref{eq:o2_q}
for $k=1,\dots,d$.
The expected global error is of order $O(\dt^p)$,
depending on which SLDG-p scheme we use, plus the spatial error of order $O(\frac{\dx^{k+1}}{\dt})$.

On the other hand, for a given constant vector $\mS\in\R^d$,
the evaluation of $\Pi (u( \cdot + \mS\sdt))\equiv \cT^{\mS}_{\sdt} u$ can then be done exactly by using  
by an exact splitting as in Section \ref{sec:2.3}, and using exact quadrature rule for each dimension. An illustration is given 
in the Numerical section (see~\Exnin).

\medskip

\paragraph{\bf Case of a non-constant diffusion matrix $\sigma(x)$:}
In the more general case, there is an error $O(\dt)$ coming from the splitting. 
Assuming that $d=2$, instead of $u^{n+1} = S^{\mS_2}_\dt \ S^{\mS_1}_{\dt}  u^n $ 
we can consider Strang's splitting  
\be\label{eq:2D-strang}
  u^{n+1} =   S^{\mS_2}_{\frac{1}{2}\dt}\,  S^{\mS_1}_\dt \, S^{\mS_2}_{\frac{1}{2}\dt}  u^n 
\ee
(with consistency error of order $O(\dt^2)$),
where each $S^{\mS_k}_\dt$ is computed by using the \textit{modified} SLDG-2 scheme (consistency error of order $O(\dt^{2})$).
In that case, 
the global expected error is of order $O(\dt^{2}) +  O(\frac{\dx^{k+1}}{\dt})$.
Adaptation of Strang's splitting when $d\geq 2$ can also be considered.

\medskip

\paragraph{\bf General convection-diffusion equation.} 

Let us consider 
$$ u_t - \frac{1}{2} Tr(A D^2 u) + b \cdot \nabla u + r u = 0, $$
with $A=\ms \ms^T = \sum_k \mS_k \mS_k^T$ and $B=(b_1,\dots,b_d)^T$.
In the constant coefficient case, the spatial differential operators commute and therefore we can use Trotter's splitting:
$$
  u^{n+1} = e^{-r\dt}
    S^{\mS_d}_\dt \ \cdots\ S^{\mS_2}_\dt \ S^{\mS_1}_\dt  \cT^1_{b_1\dt} \cdots \cT^d_{b_d\dt} u^n
$$
where $S$ is one of the SLDG-p schemes ($p=1,2,\dots$).

For non-constant coefficients,
we can use a generalization of the previous splitting techniques.

...

$(i)$ 
We first consider the case of RK1 ($q=1$).
Let $S^0_\dt$ correspond to the direct semi-Lagrangian scheme 
\be
  (S^0_\dt v^n)(x) := e^{-r\dt} \frac{1}{2} \bigg( v^n( x + b \dt  + a \sqrt{\dt}) + v^n( x + b \dt  - a \sqrt{\dt}) \bigg).
\ee
and let us first show the following  consistency error bound:
\be \label{eq:eps1}
 \| \frac{1}{\dt}  (v^{n+1} - \cS^0_\dt v^n) \|_\infty \leq C_1 \dt.
\ee
where $C$ is a constant that depends only of $\max_{t\in[0,T]}\|\frac{\partial^2 v}{\partial t^2}(t,.)\|_\infty$.

By Taylor expansion of $v^n$, for $\theta>0$ small, we have
\be \label{eq:tayl}
v^n(x+ b \mt + \eps a \sth) 
  & = &  v^n(x)+(\eps a \sth + \mt b) \dfrac{d v^n}{dx}(x)+ \frac {1}{2!}(\epsilon^2 a^2 \theta + 2 a b \epsilon \theta^{\frac{3}{2}})\dfrac{\partial^2 v^n}{\partial x^2}(x) \\
  &   &  + \frac {1}{3!} \epsilon^3 a^3 \theta^{\frac{3}{2}} \dfrac{\partial^3 v^n}{\partial x^3}(x) +\mathcal{O}(\theta^2) \nonumber
\ee
Hence
\begin{eqnarray}
 e^{r\dt} (S^0_\mt v^n)(x) & = &   \frac{1}{2}\sum_{\epsilon=\pm1} v(t_n,  x+ b \mt + \eps a \sth)  \\
  &=& v(t_n,x)+ \theta \Big[ b \dfrac{\partial v}{\partial x}(t_n,x) + \frac{a^2}{2} \dfrac{\partial^2 v}{\partial x^2}(t_n,x) \Big] +\mathcal{O}(\theta^2) \nonumber 
\end{eqnarray}
In particular, taking $\mt=\dt$, we get
\be 
 \frac{e^{r\dt} (S^0_\dt v^n)(x) - v^n(x)}{\dt}  & = & 
    b \dfrac{\partial v}{\partial x}(t_n,x) + \frac{a^2}{2} \dfrac{\partial^2 v}{\partial x^2}(t_n,x)  +\mathcal{O}(\dt) \nonumber 
\ee

Note that we can also write:
\begin{eqnarray}
({\dfrac{\partial v}{\partial t} + rv})\bigg|_{t=t_n} = \frac{d}{dt}(e^{rt}v)e^{-rt}\bigg|_{t=t_n} 
  & = & \Big( \frac{e^{rt_{n+1}}v^{n+1}-e^{rt_n}v^n}{\dt} \Big) e^{-rt_n}  + \mathcal{O}(\dt) \nonumber \\
  & = & \Big( \frac{e^{r \dt}v^{n+1}-v^n}{\dt} \Big) + \mathcal{O}(\dt)
\end{eqnarray}
Therefore, using the fact that $v$ is a solution of the diffusion equation and combining the previous equalities we obtain
\beno
  \frac{e^{r \dt} (v^{n+1}(x) - S^0_\dt v^n (x))}{\dt}  = \cO(\dt)
\eeno
which proves the estimate~\eqref{eq:eps1}.

Next, we define the consistency error using the true scheme  $S_\dt$: 
\be\label{eq:epsn}
  \eps^{n}(x) = \frac{1}{\dt}  (v^{n+1}(x) - S_\dt v^n (x)). 
\ee
Since $v^n$ is regular, we observe that 
\be\label{eq:bound2}
  \|\cS_\dt v^n - \cS^0_\dt v^n\|_2 & = & \|\Pi(\cS^0_\dt v^n) - \cS^0_\dt v^n\|_2  \nonumber \\
   & \leq &  C_k(\cS^0_\dt v^n) |\mO|^{1/2}\ \dx^{k+1} \nonumber \\
   & \leq &  C_k( v^n) |\mO|^{1/2}\ \dx^{k+1}
\ee
where $C_k$ is defined in Lemma~\ref{lem:proj}.
We deduce from \eqref{eq:eps1} and \eqref{eq:bound2} the following bound
\be\label{eq:eps1_2}
 \| \eps^n \|_2 \leq C_1 (\frac{\dx^{k+1}}{\dt} + \dt).
\ee
where $C_1$ depends only of 
$\max_{t\in[0,T]}\|\frac{\partial^2}{\partial t^2}v(t,.)\|_\infty$,
$\max_{t\in[0,T]}\|\frac{\partial^{(k+1)}}{\partial x^{k+1}}v(t,.)\|_\infty$ and $|\mO|$.

The end of the proof follows the lines of the proof of Theorem~\ref{th:advect_conv}.
We have $u^{n+1}= \cS_\dt u^n$ and $v^{n+1} = \cS_\dt v^n + \dt \eps^n$.
Hence $ \| u^{n+1} - v^{n+1} \|_2 \leq  \| \cS_\dt (u^n - v^n) \|_2 + \dt \|\eps^n\|_2$, 
from which we deduce, using~\eqref{eq:eps1_2}, the $L^2$ norm stability of $\cS_\dt$, and Lemma~\ref{lem:proj}:
\beno  
  \| u^{n} - v^{n} \|_2
    &  \leq  & \|u^0 - v^0 \|_2 + T C_1 (\frac{\dx^{k+1}}{\dt} + \dt) \\
    &  \leq  &  C_1 (T+\dt)  (\frac{\dx^{k+1}}{\dt} + \dt) 
\eeno
Hence we obtain the desired bound where $C$ is a constant such that $C\geq C_1 (T+\dt)$.

$(ii)$ Now, we consider the case $p=2$.
The new part is to prove that the following consistency bound holds:
\be \label{eq:eps_RK2}
 \| \frac{1}{\dt}  (v^{n+1} - \cS_\dt^{0,RK_2} v^n) \|_\infty \leq C_2 \dt^2.
\ee
}\fi

\section{Numerical examples}\label{sec:num}
The first \MODIF{three} examples are devoted to advection problems, while the other examples concern second-order equations.

\MODIF{
We recall that $N$ is the number of time steps (and $\dt=T/N$), and $M$ is the number of spatial mesh points in 
the one-dimensional case (resp. $M_1,M_2$ for two-dimensional cases).

Unless otherwise specified, the characteristics are one-dimensional and are always computed exactly
(see added sentence in Section 5 before the first example).

Computations were performed on a DELL Latitude E6220, Intel Core i5, 2.50GHz, 4GO RAM, 
with Linux OS, 32-bit, using GNU {\sc C++}.
}

\medskip


\if{
\noindent
{\bf \Exone}
In this test we consider an advection equation in one dimension with smooth initial data:
\be\label{eq:ex1a}
  & &  v_t + v_x =0  \qquad x\in (0,1),\ t\in(0,T), \\
  & & v(0,x)=v_0(x) \qquad x\in(0,1),
\ee
using periodic boundary conditions on $\Omega = (0,1)$, terminal time $T=1$,
\FULL{
and together with one of the following smooth initial data:
\be \label{eq:ex1_u01}
 & & v_0(x):= \sin(2\pi x),
\ee
or 
\be\label{eq:ex1_u02}
 & & v_0(x) :=\sin(2\pi (x + \sin(2 \pi x))).
\ee
}
\SHORT{
and the following smooth initial data:
\be\label{eq:ex1_u02}
 & & v_0(x) :=\sin(2\pi (x + \sin(2 \pi x))).
\ee
}

%

\FULL{Results for the $L^2$ error and corresponding orders 
are given in Tables~\ref{tab:ex1.1} and~\ref{tab:ex1.2} for initial data \eqref{eq:ex1_u01} and \eqref{eq:ex1_u02} respectively.}
\SHORT{Results for the $L^2$ error and corresponding orders 
are given in Table~\ref{tab:ex1.2}.}

In this test we have fixed the CFL number to be $2.3$, thus
$\dt \equiv C \dx$ for some constant $C$, and the expected error is
$O(\frac{\dx^{k+1}}{\dt})=O(\dx^{k})$.

We see that the expected order of convergence is well recovered. We have observed that the
$L^\infty$ and $L^1$ errors give similar result here.
In this example the scheme and hence the results are similar to \cite[Example 1]{Qiu_Shu_2011} 
excepted for the initial data.

\FULL{
\begin{table}[!hbtp]
\begin{center}
\begin{tabular}{|c|cc|cc|cc|cc|}
\hline
   & \multicolumn{2}{|c|}{k=1} & \multicolumn{2}{|c|}{k=2} & \multicolumn{2}{|c|}{k=3} 
   & \multicolumn{2}{|c|}{k=4} \\
\hline
 Mesh $M$   &   error   & order & error     & order & error     & order & error     & order  \\
\hline
 23    & 1.01E-3 &  -    & 1.90E-5 &  -    & 2.26E-07 &  -    & 4.43E-09 &  -    \\
 46    & 2.53E-4 & 2.00  & 2.44E-6 & 2.96  & 1.45E-08 & 3.95  & 1.38E-10 & 4.99  \\
 92    & 6.34E-5 & 2.00  & 3.06E-7 & 2.99  & 9.56E-10 & 3.93  & 4.34E-12 & 5.00  \\
184    & 1.58E-5 & 2.00  & 3.82E-8 & 3.00  & 6.11E-11 & 3.96  & 1.35E-13 & 4.99  \\
\hline
\end{tabular}
\end{center}
\caption{\label{tab:ex1.1} 
(\Exone) $L^2$ errors, advection test with $v_0(x)=\sin(2\pi x)$ and CFL$=2.3$.
}
\end{table}
}

\begin{table}[!hbtp]
\begin{center}
\begin{tabular}{|c|cc|cc|cc|cc|}
\hline
   & \multicolumn{2}{|c|}{k=1} & \multicolumn{2}{|c|}{k=2} & \multicolumn{2}{|c|}{k=3} & \multicolumn{2}{|c|}{k=4} \\
\hline
 Mesh $M$ &  error   &order & error    & order & error    & order & error    & order \\
\hline
 23       & 8.01E-2 & -    & 3.52E-3 & -     & 4.09E-04 & -     & 3.57E-05 & -     \\
 46       & 1.44E-2 & 2.47 & 4.05E-4 & 3.11  & 2.16E-05 & 4.24  & 1.28E-06 & 4.80  \\
 92       & 2.42E-3 & 2.57 & 5.31E-5 & 2.93  & 1.25E-06 & 4.10  & 4.15E-08 & 4.94  \\
184       & 4.79E-4 & 2.33 & 6.75E-6 & 2.97  & 7.81E-08 & 4.00  & 1.31E-09 & 4.98  \\
368       & 1.09E-4 & 2.12 & 8.47E-7 & 2.99  & 4.85E-09 & 4.00  & 4.10E-11 & 4.99  \\
\hline
\end{tabular}
\end{center}
\caption{\label{tab:ex1.2} 
\textit{(\Exone) advection test,} $L^2$ errors with initial data $v_0(x)=\sin(2\pi (x + \sin(2 \pi x)))$ and CFL$=2.3$.
}
\end{table}

}\fi


\medskip

\noindent
{\bf \Extwo.}
We consider an advection equation with non-constant advection term
\be\label{eq:ex2}
 & & v_t  + b(x) v_x= 0, \qquad  x \in (0,1), \ t \in(0,T), \\
 & & v(0,x)= sin(2 \pi x),  \qquad x\in (0,1),
\ee
and 
\be
  b(x) := C_0 + C_1 \sin(2 \pi x), \qquad \mbox{with $C_0=1$ and $C_1:=0.8$}
\ee
together with periodic boundary conditions on $(0,1)$.
The exact solution is given by $v(t,x)= \sin(2 \pi y_x(-t))$, where
$$
  y_x(-t)=\frac{1}{\pi} \mbox{atan}\bigg(-r + \mbox{tan}\bigg( \mbox{atan}( \frac{\tan(\pi x) + r}{a})-C_0 \pi a t \bigg) \bigg)
$$
with $r:=\frac{C_1}{C_0}$ and  $a:= \sqrt{1-r^2}$.

The results are given in Table~\ref{tab:ex2} for  $\dt\sim\dx$ with fixed CFL$=1.8$ and terminal time $T=1.3$.
(Here the CFL corresponds to $\| b\|_\infty \frac{\dt}{\dx}$.)
The numerical error behaves approximatively one order better than
the expected one when $\dt=\ml \dx$, that is of the order of $O(\frac{\dx^{k+1}}{\dt})\equiv O(\dx^k)$. 
Super-convergence results can be explained in some cases for other DG methods \cite{yan-shu-12}.

\begin{table}[!hbtp]
\begin{center}
\begin{tabular}{|cc|cc|cc|cc|cc|}
\hline
\multicolumn{2}{|c|}{$L^2$ error}  
   & \multicolumn{2}{|c|}{$k=1$} & \multicolumn{2}{|c|}{$k=2$} & \multicolumn{2}{|c|}{$k=3$} & \multicolumn{2}{|c|}{$k=4$} \\
\hline
  $M$ & $N$  &  error   &order & error    & order & error    & order & error    & order\\
\hline
  10  &  10  & 1.95E-01 &    - & 3.45E-02 &    -  & 1.45E-02 &    -  & 7.83E-03 &    - \\
  20  &  20  & 2.67E-02 & 1.93 & 6.06E-03 & 2.50  & 1.38E-03 & 3.39  & 2.33E-04 & 5.07 \\
  40  &  40  & 7.80E-03 & 1.77 & 6.39E-04 & 3.24  & 3.22E-05 & 5.42  & 4.31E-06 & 5.75 \\
  80  &  80  & 1.47E-03 & 2.40 & 3.62E-05 & 4.13  & 1.52E-06 & 4.40  & 7.74E-08 & 5.80 \\
 160  & 160  & 2.27E-04 & 2.69 & 3.31E-06 & 3.45  & 7.13E-08 & 4.41  & 2.48E-09 & 4.96 \\
 320  & 320  & 3.92E-05 & 2.53 & 4.03E-07 & 3.04  & 3.92E-09 & 4.18  & 8.03E-11 & 4.95 \\
\hline
\end{tabular}
\end{center}
\caption{\label{tab:ex2} \textit{(\Extwo) non-constant advection,} $\dt\sim\dx$ and CFL$=1.8$, $T=1.3$.
}
\end{table}

\if{
\begin{table}[!hbtp]
\begin{center}
\begin{tabular}{c c | c c|c c|c c}
\multicolumn{8}{c}{\uline{$k=1, \text{and } \dx \sim \dt $}}\\
$M$ & $N$ & $L^1$-Error & order & $L^2$-Error & order & $L^\infty$-Error & order\\
\hline
\hline
     10    &     10    & 7.123E-01 &     -     & 1.019E-01 &     -     & 1.951E-01 &     -      \\
     20    &     20    & 3.353E-01 & 1.087     & 2.671E-02 & 1.932     & 6.892E-02 & 1.501      \\
     40    &     40    & 9.412E-02 & 1.833     & 7.802E-03 & 1.775     & 2.124E-02 & 1.698      \\
     80    &     80    & 3.768E-02 & 1.321     & 1.472E-03 & 2.406     & 4.709E-03 & 2.174      \\
    160    &    160    & 7.241E-03 & 2.380     & 2.277E-04 & 2.693     & 8.056E-04 & 2.547      \\
    320    &    320    & 1.374E-03 & 2.398     & 3.922E-05 & 2.537     & 1.410E-04 & 2.514      \\
\\
\multicolumn{8}{c}{\uline{$k=2, \text{and } \dx \sim \dt $}}\\
$M$ & $N$ & $L^1$-Error & order & $L^2$-Error & order & $L^\infty$-Error & order\\
\hline
\hline
     10    &     10    & 2.889E-01 &     -     & 3.452E-02 &     -     & 7.165E-02 &     -      \\
     20    &     20    & 7.150E-02 & 2.015     & 6.063E-03 & 2.509     & 1.656E-02 & 2.113      \\
     40    &     40    & 1.277E-02 & 2.485     & 6.390E-04 & 3.246     & 2.274E-03 & 2.864      \\
     80    &     80    & 9.637E-04 & 3.728     & 3.627E-05 & 4.139     & 1.413E-04 & 4.008      \\
    160    &    160    & 9.471E-05 & 3.347     & 3.318E-06 & 3.451     & 1.297E-05 & 3.446      \\
    320    &    320    & 1.127E-05 & 3.072     & 4.031E-07 & 3.041     & 1.568E-06 & 3.048      \\
\\
\multicolumn{8}{c}{\uline{$k=3, \text{and } \dx \sim \dt $}}\\
$M$ & $N$ & $L^1$-Error & order & $L^2$-Error & order & $L^\infty$-Error & order\\
\hline
\hline
     10    &     10    & 1.033E-01 &     -     & 1.456E-02 &     -     & 2.950E-02 &     -      \\
     20    &     20    & 2.644E-02 & 1.966     & 1.388E-03 & 3.391     & 4.725E-03 & 2.642      \\
     40    &     40    & 1.082E-03 & 4.610     & 3.223E-05 & 5.429     & 1.464E-04 & 5.012      \\
     80    &     80    & 5.286E-05 & 4.356     & 1.522E-06 & 4.405     & 6.345E-06 & 4.528      \\
    160    &    160    & 2.672E-06 & 4.306     & 7.137E-08 & 4.414     & 2.953E-07 & 4.425      \\
    320    &    320    & 1.512E-07 & 4.143     & 3.925E-09 & 4.184     & 1.625E-08 & 4.184      \\
\\
\multicolumn{8}{c}{\uline{$k=4, \text{and } \dx \sim \dt $}}\\
$M$ & $N$ & $L^1$-Error & order & $L^2$-Error & order & $L^\infty$-Error & order\\
\hline
\hline
     10    &     10    & 7.105E-02 &     -     & 7.839E-03 &     -     & 1.855E-02 &     -      \\
     20    &     20    & 4.251E-03 & 4.063     & 2.332E-04 & 5.071     & 7.709E-04 & 4.589      \\
     40    &     40    & 2.264E-04 & 4.231     & 4.318E-06 & 5.755     & 1.944E-05 & 5.309      \\
     80    &     80    & 4.019E-06 & 5.815     & 7.749E-08 & 5.800     & 3.730E-07 & 5.704      \\
    160    &    160    & 1.505E-07 & 4.740     & 2.486E-09 & 4.962     & 1.266E-08 & 4.881      \\
    320    &    320    & 5.412E-09 & 4.797     & 8.033E-11 & 4.952     & 4.023E-10 & 4.975      \\
\end{tabular}
\end{center}
\caption{\label{tab:ex2} (\Extwo) $u_t + b(x) u_x=0$ with non-constant $b(x)$, and $\dt\equiv \dx$, $CFL=1.8$.}
\end{table}
}\fi


\REMOVE{
\medskip

\noindent{\bf \Exthree\ (2D advection)}   

\centerline{\fbox{TO BE REMOVED}}

We consider a two-dimensional advection PDE 
\be
  & & v_t + b_1 v_x + b_2 v_y = 0 \qquad (x,y) \in \mO,\ t\in(0,T),\\
  & & v(0,x,y)=\sin(2\pi(x+y))   \qquad (x,y) \in \mO
\ee
with $\mO = (0,1)^2$ and periodic boundary conditions, and with the constant dynamics $b_1=b_2=1$.

Results are shown in Table~\ref{tab:ex3}, for $T=1$.
Here, the scheme is based on a Trotter splitting, where the CPU times in below table are denoted in seconds.
Super-convergence of order $k+1$ is also observed here.

\begin{table}[!hbtp]
\begin{center}
\TOREMOVE{
\begin{tabular}{|cc|cc|cc|cc|ccc|}
\hline
\multicolumn{2}{|c|}{$L^2$ error}  
   & \multicolumn{2}{|c|}{k=1} & \multicolumn{2}{|c|}{k=2} & \multicolumn{2}{|c|}{k=3} & \multicolumn{3}{|c|}{k=4} \\
\hline
  $M$ & $N$  &  error   &order & error    & order & error    & order & error    & order & cpu(s)\\
\hline
    10 &     3  & 6.50E-03 &  -     & 2.08E-04 &  -     & 1.08E-05 &  -     &  4.51E-07 &    -    & 0.02\\
    20 &     6  & 1.59E-03 &   2.03 & 3.52E-05 &   2.57 & 6.65E-07 &   4.02 &  1.51E-08 &   4.90  & 0.07\\
    40 &    12  & 3.96E-04 &   2.01 & 4.90E-06 &   2.84 & 4.17E-08 &   4.00 &  4.76E-10 &   4.99  & 0.32\\
    80 &    24  & 9.90E-05 &   2.00 & 6.18E-07 &   2.99 & 2.66E-09 &   3.97 &  1.49E-11 &   5.00  & 1.46\\
\hline
\end{tabular}
}
\end{center}
\caption{\label{tab:ex3} \textit{(\Exthree) 2D advection}, $u_0(x,y)=\sin(2\pi (x + y))$, terminal time $T=1$, and CFL=$3.33$.
}
\end{table}

}
}

\medskip

\noindent{\bf \Exfour\ (2D advection with non-constant coefficients).}   
We consider the following rotation example of a "bump":
\beno
   & &  u_t + 2\pi(-x_2,x_1)\cdot\nabla u = 0, \qquad x=(x_1,x_2)\in \mO,\ t\in(0,T),\\
   & &   u(0,x)= 1- e^{-20((x_1-1)^2 + x_2^2-r_0^2)},
\eeno
with $\mO:=(-2,2)^2$, $r_0=0.25$ and terminal time $T=0.9$.
Since $b(x_1,x_2)=2\pi(-x_2,x_1)$ is non-constant, Trotter's splitting is no longer exact.

In Table~\ref{tab:ex4}, 
we test and compare the splitting algorithms as described in subsection 2.3, from order $2$ to $6$
(Strang's splitting, Forest's 4th-order splitting and Yoshida's 6th-order splittings,
tested with $k=2,4$, and $k=6$ respectively), using $M_1=M_2=M$ spatial mesh points.
Trotter's splitting error, not represented in Table~\eqref{tab:ex4}, is of order $1$. 
We have avoided taking the particular case of $T=1$ (full turn)
because it gives better numerical results but prevents proper understanding of the order of the method.

In this example, 
the initial datum is sufficiently close to $1$ outside a ball of radius $1.5$,
so that the error coming from the boundary treatment is negligible.


\begin{small}
\begin{table}[!hbtp]
\begin{center}
\MODIF{
\begin{tabular}{|cc|ccc|ccc|ccc|}
\hline
\multicolumn{2}{|c|}{$L^2$ error}  
   & \multicolumn{3}{|c|}{Strang (with $k=2$)} 
   & \multicolumn{3}{|c|}{Forest (with $k=4$)} & \multicolumn{3}{|c|}{Yoshida (with $k=6$)} \\
\hline
  $N$ & $M$ &  error & order & cpu(s)  &   error & order & cpu(s) &   error & order  & cpu(s) \\
\hline
    10 &    10  & 2.91E-01 &  -     &   0.004 & 1.66E-01 &    -   &   0.01 &   1.81E-02 &    -   &   0.07 \\
    20 &    20  & 6.62E-02 &  2.13  &   0.012 & 1.01E-02 &  4.04  &   0.03 &   2.45E-04 &  6.21  &   0.26 \\
    40 &    40  & 1.60E-02 &  2.05  &   0.032 & 6.24E-04 &  4.01  &   0.22 &   3.64E-06 &  6.07  &   1.65 \\
    80 &    80  & 3.99E-03 &  2.01  &   0.272 & 3.89E-05 &  4.00  &   2.04 &   5.61E-08 &  6.02  &  15.06 \\
   160 &   160  & 9.96E-04 &  2.00  &   2.844 & 2.43E-06 &  4.00  &  18.25 &   1.03E-09 &  5.77  & 120.98 \\
\hline
\end{tabular}
}
\end{center}
\caption{\label{tab:ex4} \textit{(\Exfour), 2D rotation,} $L^2$ errors at time $T=0.9$, using $M\times M$ grid points
and splittings of order $2,4$ and $6$.}
\end{table}
\end{small}


\medskip

\noindent{\bf \Exfive\ (2D deformation with non-constant coefficients)}   
In this example, close to the one in for instance Qiu and Shu~\cite[Example 5]{Qiu_Shu_2011}, the advection term is non-constant  
\beno
   & &  u_t -\bigg(g(t)\cos(\frac{x^2}{2})\sin(y)\bigg) u_{x} + \bigg(g(t)\cos(\frac{y^2}{2})\sin(x)\bigg) u_{y} = 0, \nonumber \\
   & & \hspace{5cm} \qquad (x,y)\in \mO,\ t\in(0,T),
\eeno
with $\mO:=(-2,2)^2$, $T=1$ and same initial datum as in Example~4.
Here we furthermore  consider $g(t):=1$ for $t\in[0,\frac{T}{2}]$ and then $g(t):=-1$ for $t\in]\frac{T}{2}, T]$, 
so that the exact solution after time $T$ is $u(T,x,y)=u_0(x,y)$.

In Table~\ref{tab:ex5n}, 
we test and compare the splitting algorithms of orders $2$,$4$ and  $6$
(Strang's, Forest's and Yoshida's splittings),
using polynomials of degree $k=2$, $4$ and $6$ respectively.
The cpu times are also given in seconds.


\begin{small}
\begin{table}[!hbtp]
\begin{center}
\begin{tabular}{|cc|ccc|ccc|ccc|}
\hline
\multicolumn{2}{|c|}{$L^2$ error}  
   & \multicolumn{3}{|c|}{Strang (with $k=2$)} 
   & \multicolumn{3}{|c|}{Forest (with $k=4$)} & \multicolumn{3}{|c|}{Yoshida (with $k=6$)} \\
\hline
 $N$ & $M$  &  error & order  & cpu(s)  &   error & order  & cpu(s)  &   error & order & cpu(s) \\
\hline
10 &    10  &   1.28E-01 &   -    &  0.005 &    7.82E-03 &   -     & 0.08 &    7.70E-04  &  -    &   0.85\\
20 &    20  &   1.45E-02 &  3.14  &  0.034 &    2.78E-04 &  4.81   & 0.36 &    6.60E-06  & 6.87  &   3.65\\
40 &    40  &   1.44E-03 &  3.33  &  0.104 &    9.06E-06 &  4.94   & 1.58 &    3.32E-08  & 7.64  &  16.20\\
80 &    80  &   1.66E-04 &  3.12  &  0.620 &    3.30E-07 &  4.78   & 7.73 &    2.71E-10  & 6.94  & 140.41\\
\hline
\end{tabular}
\end{center}
\caption{\label{tab:ex5n} \textit{(\Exfive) 2D deformation}, 
$L^2$ errors at time $T=1$, using $M\times M$ grid points and splittings of order $2,4$ and $6$.}
\end{table}
\end{small}

\comment{\noindent{\bf Example 5  (2D - discontinuous data) : MAY BE ?}}


\noindent{\bf \Exsix\ (1D convection diffusion).}
Now, we consider the diffusion equation
\begin{eqnarray}\label{eq:ex3}
 & & v_t  - \frac{1}{2}\sigma^2 v_{xx} + b v_x = 0, \quad \forall x \in \Omega,\ t \in (0,T) \\
 & & v(0,x)=\cos(2\pi x) + \frac{1}{2} \cos(4\pi x),\quad x\in \mO
\end{eqnarray}
together with periodic boundary conditions on $\Omega=(0,1)$, with constants $\sigma=0.1$, $b=0.3$, and $T=0.2$.
The exact solution is given by 
$$v(t,x)=\sum_{k=1,2} c_k \exp(-2\sigma^2 k^2 \pi^2 t)  \cos (2 k\pi (x-bt)),$$
with $c_1=1$ and $c_2=\frac{1}{2}$. 

Since the operators $\frac{1}{2}\sigma^2 \partial^2_{x}$ and $b \partial_x$ commute, 
we use the simple scheme 
$$
   u^{n+1} = S^\sigma_\dt \cT_{b\dt} u^n.
$$

In Table~\ref{tab:ex5.2b} we study the orders of the SLDG-RKp schemes when $\dt \sim \dx$ and $p\in\{1,2,3\}$. 
The orders are as expected.

We also give in Table~\ref{tab:ex5.3b} the errors when taking larger time steps ($\dt \gg \dx$), 
still showing good behavior, while the ratio $\frac{\dt}{\dx}$ varies from $0.40$ to $6.40$.

We have numerically also tested the case when $b=0$ (pure diffusion);
the numerical results are very close to the present case.

\FULL{
For sake of completeness, the numerical results for $b=0$ are also given in Tables~\ref{tab:ex5.1}, \ref{tab:ex5.2} and \ref{tab:ex5.3}.
\begin{table}[!hbtp]
\begin{center}
\begin{tabular}{|c|cc|cc|cc|}
\hline
$L^2$ error
   & \multicolumn{2}{|c|}{SLDG-RK1} & \multicolumn{2}{|c|}{SLDG-RK2} & \multicolumn{2}{|c|}{SLDG-RK3} \\
\hline
   $N$    &  error   &order & error    & order & error    & order\\ 
\hline
  10  & 3.52E-03 &   -   & 3.27E-05  &     -  &  1.37E-07  &  -      \\
  20  & 1.73E-03 &  1.02 & 8.05E-06  &  2.02  &  1.69E-08  &  3.01   \\
  40  & 8.61E-04 &  1.01 & 1.99E-06  &  2.01  &  2.10E-09  &  3.00   \\
  80  & 4.29E-04 &  1.01 & 4.96E-07  &  2.00  &  2.62E-10  &  3.00   \\
 160  & 2.14E-04 &  1.00 & 1.23E-07  &  2.00  &  3.27E-11  &  3.00   \\
\hline
\end{tabular}
\end{center}
\caption{\label{tab:ex5.1} \texit{\Exsix\ (1D diffusion),} with
fixed spatial order $k=4$ and mesh $M=1000$, to check time accuracy.
}
\end{table}

\begin{table}[!hbtp]
\begin{center}
\begin{tabular}{|cc|cc|cc|cc|}
\hline
  \multicolumn{2}{|c|}{$L^2$ error} 
   & \multicolumn{2}{|c|}{SLDG-RK1} & \multicolumn{2}{|c|}{SLDG-RK2} & \multicolumn{2}{|c|}{SLDG-RK3} \\
\hline
  $M$ & $N$  &  error   & order  & error    & order  &  error   & order  \\
\hline
\hline
   20 &  10  & 4.10E-03 &   -    & 4.88E-05 &    -   & 8.27E-07 &    -   \\
   40 &  20  & 1.75E-03 &  1.23  & 2.36E-05 &  1.05  & 5.78E-07 &  0.52  \\
   80 &  40  & 8.68E-04 &  1.01  & 2.23E-06 &  3.40  & 3.82E-09 &  7.24  \\
  160 &  80  & 4.30E-04 &  1.01  & 5.58E-07 &  2.00  & 5.10E-10 &  2.91  \\
  320 & 160  & 2.14E-04 &  1.00  & 1.38E-07 &  2.01  & 4.01E-11 &  3.67  \\
  640 & 320  & 1.07E-04 &  1.00  & 3.44E-08 &  2.01  & 4.74E-12 &  3.08  \\
\hline
\end{tabular}
\end{center}
\caption{\label{tab:ex5.2} \textit{\Exsix\ (1D diffusion),} Case $b=0$,
SLDG-RKp schemes with $\dt \equiv \dx$ and $p\in\{1,2,3\}$.
}
\end{table}

\begin{table}[!hbtp]
\begin{center}
\begin{tabular}{|cc| c |c|c|c|}
\hline
  $M$ & $N$  & $\frac{\dt}{\dx}$  & $L^1$-Error & $L^2$-Error & $L^\infty$-Error \\
\hline
\hline
   10 &   10 & 1.00 & 5.129E-05 & 1.443E-05 & 2.166E-05 \\
   20 &   20 & 1.00 & 2.465E-06 & 9.191E-07 & 1.088E-06 \\
   40 &   30 & 1.33 & 9.307E-08 & 5.002E-08 & 5.588E-08 \\
   80 &   40 & 2.00 & 6.591E-09 & 3.360E-09 & 3.823E-09 \\
  160 &   50 & 3.20 & 1.923E-09 & 1.079E-09 & 1.209E-09 \\
  320 &   60 & 5.43 & 1.059E-09 & 6.221E-10 & 6.933E-10 \\
\hline
\end{tabular}
\end{center}
\caption{\label{tab:ex5.3} \Exsix\ (1D diffusion), 
"SLDG-RK3" with $P_3$ and large time steps $\dt \gg \dx$.
}
\end{table}

} 

\FULL{
\begin{figure}[!hbtp]\label{fig:Diff_RK3}
\centering \subfloat {\includegraphics[scale=0.5]{Diff_RK3}}
\caption{\Exsix\ (1D diffusion) results for $k=3$ and $\dt\equiv\dx$}
\end{figure}
}

\if{
\begin{table}[H]
\begin{center}
\begin{tabular}{c c | c c|c c|c c}
\multicolumn{8}{c}{\uline{$k=4, RK2, \text{and } M=1000$}}\\
$M$ & $N$ & $L^1$-Error & order & $L^2$-Error & order & $L^\infty$-Error & order\\
\hline
\hline
     1000 &      10   & 5.428E-05 &     -     & 3.278E-05 &     -     & 3.644E-05 &     -      \\
     1000 &      20   & 1.334E-05 & 2.025     & 8.051E-06 & 2.026     & 8.950E-06 & 2.025      \\
     1000 &      40   & 3.307E-06 & 2.012     & 1.995E-06 & 2.013     & 2.218E-06 & 2.013      \\
     1000 &      80   & 8.233E-07 & 2.006     & 4.966E-07 & 2.006     & 5.520E-07 & 2.006      \\
     1000 &     160   & 2.054E-07 & 2.003     & 1.239E-07 & 2.003     & 1.377E-07 & 2.003      \\
     1000 &     320   & 5.130E-08 & 2.002     & 3.094E-08 & 2.002     & 3.439E-08 & 2.002      \\
\\
\multicolumn{8}{c}{\uline{$k=4, RK3, \text{and } M=1000$}}\\
$M$ & $N$ & $L^1$-Error & order & $L^2$-Error & order & $L^\infty$-Error & order\\
\hline
\hline
     1000 &      10   & 2.183E-07 &     -     & 1.371E-07 &     -     & 1.522E-07 &     -      \\
     1000 &      20   & 2.696E-08 & 3.017     & 1.692E-08 & 3.018     & 1.880E-08 & 3.018      \\
     1000 &      40   & 3.349E-09 & 3.009     & 2.103E-09 & 3.009     & 2.336E-09 & 3.009      \\
     1000 &      80   & 4.175E-10 & 3.004     & 2.620E-10 & 3.004     & 2.911E-10 & 3.004      \\
     1000 &     160   & 5.241E-11 & 2.994     & 3.275E-11 & 3.000     & 3.638E-11 & 3.000      \\
     1000 &     320   & 7.163E-12 & 2.871     & 4.184E-12 & 2.968     & 4.657E-12 & 2.966      \\
\end{tabular}
\end{center}
\caption{\label{tab:5.1b}}
\end{table}
}\fi

\begin{table}[!hbtp]
\begin{center}
\begin{tabular}{|cc|cc|cc|cc|}
\hline
  \multicolumn{2}{|c|}{$L^2$ error} 
   & \multicolumn{2}{|c|}{SLDG-RK1 ($P_1$)} & \multicolumn{2}{|c|}{SLDG-RK2 ($P_2$)} & \multicolumn{2}{|c|}{SLDG-RK3 ($P_3$)} \\
\hline
  $M$ & $N$  &  error   & order  & error    & order  &  error   & order  \\
\hline
\hline
 10 &   10  & 9.94E-03 &   -    & 1.37E-03 &    -   & 8.66E-05 &     -  \\ 
 20 &   20  & 1.39E-03 &  2.84  & 1.08E-04 &  3.67  & 3.70E-06 &  4.55  \\ 
 40 &   40  & 2.93E-04 &  2.25  & 3.63E-06 &  4.90  & 1.03E-07 &  5.17  \\ 
 80 &   80  & 8.02E-05 &  1.87  & 6.28E-07 &  2.53  & 9.81E-09 &  3.39  \\ 
160 &  160  & 2.35E-05 &  1.77  & 9.72E-08 &  2.69  & 7.00E-10 &  3.81  \\ 
320 &  320  & 8.22E-06 &  1.52  & 2.60E-08 &  1.90  & 5.79E-11 &  3.60  \\ 
640 &  640  & 4.06E-06 &  1.02  & 6.17E-09 &  2.08  & 5.81E-12 &  3.32  \\ 
\hline
\end{tabular}
\caption{\label{tab:ex5.2b} \textit{\Exsix\ (1D diffusion),}
SLDG-RKp schemes with $\dt \sim\dx$.
}
\end{center}
\end{table}

\begin{table}[!hbtp]
\begin{center}
\begin{tabular}{|cc|c|c|c|}
\hline
  \multicolumn{2}{|c|}{$L^2$ error} 
   & \multicolumn{1}{|c|}{SLDG-RK1 ($P_1$)} & \multicolumn{1}{|c|}{SLDG-RK2 ($P_2$)} & \multicolumn{1}{|c|}{SLDG-RK3 ($P_3$)} \\
\hline
  $M$ & $N$  & error  &  error &  error \\
\hline \hline
  20 &   10  &  1.37E-03   & 4.34E-05   & 1.79E-06   \\ 
  40 &   15  &  5.13E-04   & 6.87E-06   & 1.41E-07   \\ 
  80 &   20  &  1.39E-04   & 1.40E-06   & 1.11E-08   \\ 
 160 &   25  &  1.05E-04   & 1.83E-07   & 5.20E-10   \\ 
 320 &   30  &  8.49E-05   & 6.14E-08   & 3.09E-11   \\ 
 640 &   35  &  7.26E-05   & 4.35E-08   & 1.15E-11   \\ 
1280 &   40  &  6.35E-05   & 3.31E-08   & 7.02E-12   \\ 
\hline
\end{tabular}
\caption{\label{tab:ex5.3b} \textit{\Exsix\ (1D diffusion),}
SLDG-RKp with large time steps $\dt \gg \dx$. 
}
\end{center}
\end{table}

\if{
\begin{table}[!hbtp]
\MODIF{
\begin{center}
\begin{tabular}{|cc|cc|cc|cc|}
\hline
  \multicolumn{2}{|c|}{$L^2$ error} 
   & \multicolumn{2}{|c|}{SLDG-RK1 ($P_1$)} & \multicolumn{2}{|c|}{SLDG-RK2 ($P_2$)} & \multicolumn{2}{|c|}{SLDG-RK3 ($P_3$)} \\
\hline
  $M$ & $N$  & error & order  & error & order  & error & order   \\
\hline \hline
    20 &    10  & 1.37E-03 &  -     & 4.34E-05 &  -     & 1.79E-06 &  -    \\
    40 &    15  & 5.13E-04 &  2.42  & 6.87E-06 &   4.55 & 1.41E-07 &  6.27 \\
    80 &    20  & 1.39E-04 &  4.54  & 1.40E-06 &   5.53 & 1.11E-08 &  8.84 \\
   160 &    25  & 1.05E-04 &  1.26  & 1.83E-07 &   9.12 & 5.20E-10 & 13.72 \\
   320 &    30  & 8.49E-05 &  1.17  & 6.14E-08 &   5.99 & 3.09E-11 & 15.48 \\
   640 &    35  & 7.26E-05 &  1.02  & 4.35E-08 &   2.24 & 1.15E-11 &  6.41 \\
  1280 &    40  & 6.35E-05 &  1.00  & 3.31E-08 &   2.05 & 7.02E-12 &  3.70 \\
\hline
\end{tabular}
\caption{\label{tab:ex5.3b} \textit{\Exsix\ (1D diffusion),}
SLDG-RKp with large time steps $\dt \gg \dx$.
}
\end{center}
}
\end{table}
}\fi

\FULL{
\begin{figure}[!hbtp]\label{fig:DiffConv_RK3}
\centering
\subfloat {\includegraphics[scale=0.5]{DiffConv_RK3}}
\caption{Results for $k=3, \dt \equiv \dx$}
\end{figure}
}

\medskip

\noindent{\bf \Exhei\ (1D Black and Scholes and boundary conditions)}
This example deals with 
the one-dimensional Black-Scholes (B\&S) PDE for the pricing of a European put option with one asset \cite{Wilmott}.
After a change of variable in logarithmic coordinates,\footnote{
The classical B\&S PDE for the put option reads
$$ v_t - \frac{1}{2} \ms^2 s^2 v_{ss}  - b s v_s + r v = 0, \qquad s \in (0,\infty),\
  t \in (0,T), 
$$
(where $b=r-\frac{1}{2}\ms^2$),
with initial condition $v(0,s)=\varphi(s)\equiv \max(K-s,0)$. Then using the change of variable $x=\log(s/K)$ and $u(t,x):=v(t,s)$,
we obtain the PDE \eqref{eq:BS} on $x\in \R$.
}
the equation for the European put option becomes on $\mO:=(\xmin,\xmax)$:
\begin{eqnarray}\label{eq:BS}
\begin{cases} 
  u_t - \frac{1}{2} \ms^2 u_{xx} + b u_x + r u = 0, \qquad x \in \mO,\ t \in (0,T), \\
  u(0,x)=u_0(x)=K \max(1-e^x,0) \qquad x\in \mO,\\
  u(t,x)=u_\ell(t)\equiv K e^{-rt} - K e^x \qquad t\in (0,T),\ x\leq \xmin,\\
  u(t,x)=u_r(t)\equiv 0 \qquad t\in (0,T), \ x\geq \xmax,
\end{cases}
\end{eqnarray}
with $b:=-(r-\frac{1}{2}\ms^2)$ and where $\xmin<0$ and $\xmax>0$,
and we have imposed boundary conditions outside of $\mO$.
Numerically, the initial datum exhibits singular behavior at $x=0$ (as it is only Lipschitz regular).

For this PDE the scheme reads 
$$
   u^{n+1} = e^{-r\dt} S^{\ms}_\dt \cT^b_{\dt} u^n.
$$

The following financial parameters are used: $K=100$ (strike price), $r=0.10$ (interest rate), $\sigma=0.2$ (volatility), and 
$T=0.25$ (maturity).
Since the interesting part of the solution lies in a neighborhood of $x=0$ (notice that $\varphi$ has a singularity at $x=0$),
for the computational domain we consider
\begin{equation*}  
   \mO = (\xmin, \xmax) = (-2,2).
\end{equation*}
In principle the PDE should be considered with $|\xmin|,|\xmax|>\!\!>1$, 
but here it can be numerically observed that the solution doesn't really change
for \mbox{$|\xmin|,|\xmax|\geq 2$.}

\MODIF{
Results are reported in Table~\ref{tab:8.1} for the $L^2$ errors, 
where $\dt$ is chosen of the same order as $\dx$, and the SLDG-RK1 SLDG-RK2 and SLDG-RK3 schemes are compared, together with a $P_4$ polynomial basis 
($k=4$). We used a $P_4$ basis so that the error from the spatial approximation is in principle negligible with respect to the time discretisation error.
We numerically observe the expected order 1 (resp. 2) for the SLDG-RK1 (resp. SLDG-RK2) scheme,
and approximatly order 3 for the SLDG-RK3 scheme (of expected theoretical order 3).
}

\begin{rem}[Boundary treatment]
For semi-Lagrangian schemes, the knowledge of $u(t,x)$ for $x\leq \xmin$ or $x\geq \xmax$ can be used if it is available.
Here, "out-of-bound" values are needed for computing $S^0 v^n$, $S^0 S^0 v^n$ and $S^0 S^0 S^0 v^n$ for $v^n=\cT^b_{\dt} u^n$. 
In particular, the values $u^n(x+k\ms\sqrt{\dt}-b\dt)$ for $|k|\leq 3$ are used when $y:=x+k\ms\sqrt{\dt}-b\dt$ lies outside of $(\xmin,\xmax)$. 
In that case, we simply directly use the "out-of-bounds" values $u_\ell(t_n,y)$ when $y\leq \xmin$  or $u_r(t_n,y)$ when $y\geq \xmax$.

It is clear that this will not work for a general PDE posed on a given domain with given boundary conditions.
(See however \cite{ach-fal-2012} for an example of a semi-Lagrangian scheme applied to a PDE with Neuman boundary conditions.)
\end{rem}

\if{
\begin{table}[!hbtp]
\begin{center}
\begin{tabular}{|cc|cc|cc|cc|}
\hline
$M$ & $N$ & $L^1$-Error & order & $L^2$-Error & order & $L^\infty$-Error & order\\
\hline
%
%
   10 &   10  & 2.99E-02 &   -    & 3.49E-02 &     -  & 6.22E-02 &     -  \\ 
   20 &   20  & 9.50E-04 &  4.98  & 1.50E-03 &  4.55  & 3.25E-03 &  4.26  \\ 
   40 &   40  & 5.04E-05 &  4.24  & 8.20E-05 &  4.19  & 2.82E-04 &  3.53  \\ 
   80 &   80  & 1.66E-06 &  4.92  & 2.75E-06 &  4.90  & 1.12E-05 &  4.65  \\ 
  160 &  160  & 4.79E-08 &  5.12  & 8.84E-08 &  4.96  & 4.49E-07 &  4.64  \\ 
\hline
\end{tabular}
\end{center}
\caption{\label{tab:8.1} \textit{\Exhei, 1D Black and Scholes PDE.}
Error table with $\dt\sim \dx$, using SLDG-RK3 and $P_3$ polynomials.
}
\MODIF{\fbox{TODO}: add RK1 and RK2  method, keep $L^2$ errors, keep CPUtimes}
\end{table}
}\fi

\begin{table}[!hbtp]
\MODIF{
\begin{center}
\begin{tabular}{|cc|ccc|ccc|ccc|}
\hline
\multicolumn{2}{|c|}{$L^2$ error}  
   & \multicolumn{3}{|c|}{SLDG-RK1} & \multicolumn{3}{|c|}{SLDG-RK2} & \multicolumn{3}{|c|}{SLDG-RK3} \\
\hline
$M$ & $N$ & error & order & cpu(s) & error & order & cpu(s) & error & order & cpu(s)\\
\hline
      10 &    10  & 6.30E-02 &  -     &   0.001  & 3.84E-02 &  -     &   0.001 & 4.17E-02 &  -    &   0.004 \\
      20 &    20  & 6.63E-03 &  3.25  &   0.008  & 2.27E-03 &   4.08 &   0.004 & 2.49E-03 &  4.07 &   0.004 \\
      40 &    40  & 2.54E-03 &  1.39  &   0.012  & 1.00E-04 &   4.50 &   0.016 & 1.24E-04 &  4.32 &   0.016 \\
      80 &    80  & 1.26E-03 &  1.01  &   0.028  & 4.11E-06 &   4.61 &   0.036 & 4.58E-06 &  4.76 &   0.040 \\
     160 &   160  & 6.28E-04 &  1.00  &   0.124  & 7.85E-07 &   2.39 &   0.124 & 1.13E-07 &  5.34 &   0.152 \\
     320 &   320  & 3.14E-04 &  1.00  &   0.424  & 1.94E-07 &   2.01 &   0.464 & 1.17E-08 &  3.27 &   0.528 \\
     640 &   640  & 1.57E-04 &  1.00  &   1.668  & 4.84E-08 &   2.00 &   1.805 & 1.23E-09 &  3.25 &   2.128 \\
\hline
\end{tabular}
\end{center}
\caption{
\MODIF{
\label{tab:8.1} \textit{\Exhei\ (1D Black and Scholes PDE).}
Error table with $\dt\sim \dx$, using SLDG-RK1, SLDG-RK2 and SLDG-RK3 methods with $P_4$ polynomials ($k=4)$.
}
}
}
\end{table}

\if{
We now turn on the corresponding American put option, which is 
the similar convection-diffusion equation with an obstacle term:
\begin{eqnarray}\label{eq:AMER}
\begin{cases} 
  \min\bigg(u_t - \frac{1}{2} \ms^2 u_{xx} - b u_x + r u,\ u-\varphi(x)\bigg) = 0 \qquad x \in \mO,\ t \in (0,T), \\
  u(0,x)=\varphi(x)=K \max(1-e^x,0) \qquad x\in \mO,\\
  u(t,\xmin)=\bar u_\ell(t)\equiv K - K e^x \qquad t\in (0,T),\\
  u(t,\xmax)=\bar u_r(t)\equiv 0 \qquad t\in (0,T).
\end{cases}
\end{eqnarray}
Here we have imposed also boundary conditions.

In that case we do not expect high-order behavior of the scheme everywhere. 
However in a region where the function is smooth, we may expect
to find again high-order.

We do not know the analytical solution for the American option
We have first computed $20$ reference values $\bar u_i$ corresponding to point $\bar x_i$ such that $s_i=K e^{\bar x_i}$ is uniformly distributed in 
$[-90,\dots,110]$ by using the SLDG-RK3 scheme with $P_3$ polynomials, 
and $M=N=10000$.
In Table~\ref{tab:8.2}, we then estimate a normalized $L^2$ error on these reference values and using coarser discretization:
$$ err_{L^2}\equiv \bigg(\sum_i |u^N(x_i)-\bar u_i|^2/(s_{\max}-s_{\min})\bigg)^{1/2}.$$
The corresponding $L^\infty$ error is about one digit lower precision.

\Olivier{Explain the projection step}

\begin{table}[!hbtp]
\begin{center}
\begin{tabular}{|cc|cc|cc|cc|}
\hline
\multicolumn{2}{|c|}{$L^2$ error}  
   & \multicolumn{2}{|c|}{SLDG-RK1} & \multicolumn{2}{|c|}{SLDG-RK2} & \multicolumn{2}{|c|}{SLDG-RK3} \\
\hline
  $M$ & $N$    &  error   &order & error    & order & error    & order\\ 
\hline
 10 &   10  & 6.41E-01 &  -     & 3.44E-01 &  -     & 1.06E-01 &   -  \\ 
 20 &   20  & 4.29E-01 &  0.58  & 8.78E-02 &  1.97  & 1.46E-02 &  2.87  \\ 
 40 &   40  & 1.61E-01 &  1.41  & 9.26E-03 &  3.24  & 7.89E-04 &  4.21  \\ 
 80 &   80  & 4.24E-02 &  1.93  & 7.32E-04 &  3.66  & 9.62E-05 &  3.04  \\ 
160 &   160 & 1.05E-02 &  2.02  & 9.34E-05 &  2.97  & 5.22E-06 &  4.20  \\ 
\hline
\end{tabular}
\end{center}
\caption{\label{tab:8.2} \Exhei, $1D$ nonlinear American option with $r=0$, $T=0.25$, $\ms=0.2$. 
Error computed on $20$ referenced values around $x=0$ ($x\in[\log(0.9),\log(1.1)]$),
and with $\dt\sim \dx$.
}
\end{table}

\begin{figure}[!hbtp]
\centering
\includegraphics[scale=0.5]{gobs.pdf}
\caption{\label{fig:amer} nonlinear American option (\Exhei), $M=N=80$, K3-RK3.}
\end{figure}
}\fi

\noindent{\bf \Exsev\ (1D diffusion with non-constant $\ms(x)$).}
Now, we consider the following diffusion equation 
\be\label{eq:ex6}
 & & v_t  - \frac{1}{2}\sigma^2(x) v_{xx} = f(t,x), \qquad  x \in (0,1),\  t \in (0,T) \\
 & & v(0,x)= 0 \qquad x\in(0,1),
\ee
with periodic boundary conditions, 
$$\sigma(x):=\sin(2\pi x),$$
and, for testing purposes,
$
  f(t,x):={\bar v}_t(t,x) - \frac{1}{2}\ms^2(x) {\bar v}_{xx}(t,x)
$
where ${\bar v}(t,x):=\sin(2\pi t)\, \cos(2\pi(x-t))$, which is therefore the exact solution ($v\equiv \bar v$).

In this case, in order to get higher than first-order accuracy in time, we use the SLDG-2 scheme corresponding to a 
Platen's weak Taylor scheme.
The correction for the source term $f(t,x)$ is treated by adding the term \eqref{eq:whx-correction} 
at Gauss quadrature points, at each time step.


In Table~\ref{tab:ex6.1} we first check the accuracy with respect to time discretization, with fixed spatial mesh size
so that only the time discretization error appears.

Then, in Table~\ref{tab:ex6.2} the errors are given for varying mesh sizes such that $\dt\equiv\dx$ and with $P_1$ or $P_2$ elements 
($k=1$ or $k=2$). We find the expected orders for the schemes SLDG-1/2.  

\begin{rem}\label{rem:vanish-sigma}
Notice that there is no need 
for an assumption that the diffusion coefficient is non-vanishing in the proposed method.
\end{rem}

\begin{table}[!hbtp]
\begin{center}
\begin{tabular}{|c|cc|cc|}
\hline
$L^2$ error
   & \multicolumn{2}{|c|}{SLDG-1} & \multicolumn{2}{|c|}{SLDG-2} \\
\hline
   $N$    &  error   &order & error    & order \\
\hline
  100   &  1.19E-03  &  -    & 1.89E-04 & 2.05 \\
  200   &  5.95E-04  &  1.01 & 4.57E-05 & 1.97 \\
  400   &  2.96E-04  &  1.01 & 1.16E-05 & 1.93 \\
  800   &  1.48E-04  &  1.00 & 3.07E-06 & 1.91 \\
 1600   &  7.40E-05  &  1.00 & 8.17E-07 & 1.92 \\
\hline
\end{tabular}
\end{center}
\caption{\label{tab:ex6.1}
\textit{\Exsev\ (1D diffusion with non-constant coefficient)},
with fixed spatial mesh ($M=100$ and $P_4$ polynomials) and varying time steps $N$.
}
\end{table}

\begin{table}[!hbtp]
\begin{center}
\begin{tabular}{|cc|cc|cc|}
\hline
\multicolumn{2}{|c|}{$L^2$ error}  
   & \multicolumn{2}{|c|}{SLDG-1 (with $P_1$)} & \multicolumn{2}{|c|}{SLDG-2 (with $P_2$)} \\
\hline
   $M$ & $N$    &  error   &order & error    & order \\
\hline
    10 &     10   &  8.60E-02 &  -   & 4.13E-02  &  -   \\
    20 &     20   &  3.52E-02 & 1.29 & 7.30E-03  & 2.50 \\
    40 &     40   &  1.59E-02 & 1.15 & 1.39E-03  & 2.39 \\
    80 &     80   &  7.54E-03 & 1.08 & 3.03E-04  & 2.20 \\
   160 &    160   &  3.67E-03 & 1.04 & 7.17E-05  & 2.08 \\
   320 &    320   &  1.81E-03 & 1.02 & 1.80E-05  & 1.99 \\
\hline
\end{tabular}
\end{center}
\caption{\label{tab:ex6.2} 
\textit{\Exsev\ (1D diffusion with non-constant coefficient)},
with $\dt\sim\dx$; 
}
\end{table}

\medskip

\noindent{\bf \Exnin\ (2D diffusion)} 
We consider the following two-dimensional diffusion equation:
\be
  & & u_t - \frac{1}{2} (5 u_{xx} -4 u_{xy} + u_{yy}) = 0, \quad  x\in \mO,\ t\in(0,T), \\
  & & u(0,x)=u_0(x), \quad x\in \mO 
\ee
set on $\mO=(0,1)^2$ with periodic boundary conditions, and $T=0.2$.
The initial datum is given by 
$ u_0(x)=u_{01}(x+2y) + u_{02}(-y) $ and $u_{0i}(\xi):=\sum_{q=1,2} c^i_q \cos(2\pi q \xi)$ with the constant $c^i_q=\frac{1}{i+q}$.
The exact solution is known.%
\footnote{
Making the change of variable $\xi=(\xi_1,\xi_2)$ such that $\xi_1=x+2y$ and \mbox{$\xi_2=-y$} we find that $v(t,\xi)=u(t,x)$
satisfies $v_t -\frac{1}{2} (v_{\xi_1\xi_1} + v_{\xi_2\xi_2})=0$ and $v(0,\xi)=u_{01}(\xi_1) + u_{02}(\xi_2)$ and therefore the
exact solution is given by 
  $u(t,x)=v(t,\xi)=u_1(t,\xi_1) + u_2(t,\xi_2)$
where 
  $u_i(t,\xi) = \sum_{q=1,2} c^i_q e^{-(2\pi q)^2 t/2} \cos(2\pi q \xi)$.
}

In order to define the numerical scheme, we use the fact that
$$ A:=\left[\barr{rr} 5 & -2 \\ -2 & 1\earr\right] = \sum_{k=1,2}\ms_k \ms_k^T, \qquad 
\mbox{ with }
   \ms_1:=\VECT{1\\0}, \  \ms_2:=\VECT{2\\-1}.
$$ 

\FULL{
The results are given in Table~\ref{tab:7.1} and Table~\ref{tab:7.2}.

In Table~\ref{tab:7.1}, we check that the time discretization 
schemes SLDG-RK1, SLDG-RK2 and SLDG-RK3 have the correct order ($1$, $2$  or $3$)  by using a fixed spatial mesh ($M=200$ intervals)
and fixed spatial order ($P_4$, or $k=4$). 

In Table~\ref{tab:7.2} we then consider variable time steps and mesh steps.
Setting $\dt\sim \dx$ we expect a global order of 
$O(\dt^{q}) + O(\frac{\dx^{q+1}}{\dt}) \equiv O(\dx^q)$, which is roughly verified for $q=1,2$ and $3$. 
}

\SHORT{
The results are  given in Table~\ref{tab:7.2}, where we consider variable time steps and mesh steps $\dt\sim \dx$,
$p=k$, and expect a global error of order $O(\dt^{p}) + O(\frac{\dx^{k+1}}{\dt}) \equiv O(\dx^k)$.
}

\MODIF{
In this example involving constant diffusion coefficients, we test up to third-order schemes.
}



\FULL{
\begin{table}[!hbtp]
\begin{center}
\begin{tabular}{|c|cc|cc|cc|}
\hline
$L^2$ error
   & \multicolumn{2}{|c|}{SLDG-RK1} & \multicolumn{2}{|c|}{SLDG-RK2} & \multicolumn{2}{|c|}{SLDG-RK3} \\
\hline
   $N$    &  error   &order & error    & order & error    & order\\ 
\hline
    10    & 6.67E-03 &   -  & 1.86E-04 &   -   & 2.20E-06 &   -   \\
    20    & 3.26E-03 & 1.03 & 4.44E-05 & 2.07  & 2.62E-07 & 3.07  \\
    40    & 1.61E-03 & 1.01 & 1.08E-05 & 2.03  & 3.20E-08 & 3.03  \\
    80    & 8.04E-04 & 1.00 & 2.68E-06 & 2.01  & 3.95E-09 & 3.01  \\
   160    & 4.01E-04 & 1.00 & 6.66E-07 & 2.00  & 4.90E-10 & 3.00  \\
\hline
\end{tabular}
\end{center}
\caption{\label{tab:7.1} \Exnin\ (2D diffusion equation), 
error table with variable $\dt$ and fixed mesh size ($M=200$ and order $k=4$).
}
\end{table}
}


\begin{table}[!hbtp]
\begin{center}
\begin{tabular}{|cc|cc|cc|cc|}
\hline
\multicolumn{2}{|c|}{$L^2$ error}  
   & \multicolumn{2}{|c|}{SLDG-RK1 ($Q_1$)} & \multicolumn{2}{|c|}{SLDG-RK2 $(Q_2)$} & \multicolumn{2}{|c|}{SLDG-RK3 $(Q_3)$} \\
\hline
  $M_1=M_2$ & $N$    &  error   &order & error    & order & error    & order\\ 
\hline
    10    &     10    & 6.66E-03 &   -  & 1.86E-04 &    -  & 2.20E-06 &    -   \\
    20    &     20    & 3.26E-03 & 1.02 & 4.52E-05 & 2.04  & 3.10E-07 & 2.83   \\
    40    &     40    & 1.61E-03 & 1.01 & 1.08E-05 & 2.06  & 3.20E-08 & 3.27   \\
    80    &     80    & 8.04E-04 & 1.00 & 2.69E-06 & 2.01  & 4.34E-09 & 2.88   \\
   160    &    160    & 4.01E-04 & 1.00 & 6.66E-07 & 2.01  & 4.90E-10 & 3.14   \\

\hline
\end{tabular}
\end{center}
\caption{\label{tab:7.2} \textit{\Exnin\ (2D diffusion equation),} 
error table with $\dt\sim \dx$ using $Q_k$ polynomials.
}
\end{table}

\if{
\begin{table}[h]
\begin{center}
\begin{tabular}{c c | c c|c c|c c}
\multicolumn{8}{c}{\uline{\mbox{"SLDG-RK1" ($k=4$)}}}\\
$M$ & $N$ & $L^1$-Error & order & $L^2$-Error & order & $L^\infty$-Error & order\\
\hline
\hline
    200    &     10    & 1.588E-02 &   -       & 6.671E-03 &   -       & 7.990E-03 &   -    \\
    200    &     20    & 7.832E-03 & 1.020     & 3.267E-03 & 1.030     & 3.913E-03 & 1.030  \\
    200    &     40    & 3.893E-03 & 1.009     & 1.617E-03 & 1.015     & 1.937E-03 & 1.015  \\
    200    &     80    & 1.941E-03 & 1.004     & 8.045E-04 & 1.007     & 9.635E-04 & 1.007  \\
    200    &    160    & 9.692E-04 & 1.002     & 4.012E-04 & 1.004     & 4.805E-04 & 1.004 \\
\\
\multicolumn{8}{c}{\uline{\mbox{"SLDG-RK2" ($k=4$)}}}\\
$M$ & $N$ & $L^1$-Error & order & $L^2$-Error & order & $L^\infty$-Error & order\\
\hline
\hline
    200    &     10    & 5.108E-04 &   -       & 1.869E-04 &   -       & 2.242E-04 &   -    \\
    200    &     20    & 1.180E-04 & 2.113     & 4.442E-05 & 2.073     & 5.325E-05 & 2.074  \\
    200    &     40    & 2.855E-05 & 2.048     & 1.084E-05 & 2.034     & 1.300E-05 & 2.035  \\
    200    &     80    & 7.029E-06 & 2.022     & 2.680E-06 & 2.017     & 3.212E-06 & 2.017  \\
    200    &    160    & 1.744E-06 & 2.011     & 6.660E-07 & 2.008     & 7.983E-07 & 2.008  \\
\\
\multicolumn{8}{c}{\uline{\mbox{"SLDG-RK3" ($k=4$)}}}\\
$M$ & $N$ & $L^1$-Error & order & $L^2$-Error & order & $L^\infty$-Error & order\\
\hline
\hline
    200    &     10    & 6.989E-06 &   -       & 2.208E-06 &   -       & 2.606E-06 &   -    \\
    200    &     20    & 8.228E-07 & 3.087     & 2.625E-07 & 3.072     & 3.100E-07 & 3.071  \\
    200    &     40    & 9.977E-08 & 3.044     & 3.200E-08 & 3.036     & 3.782E-08 & 3.035  \\
    200    &     80    & 1.228E-08 & 3.022     & 3.951E-09 & 3.018     & 4.670E-09 & 3.017  \\
    200    &    160    & 1.524E-09 & 3.011     & 4.908E-10 & 3.009     & 5.803E-10 & 3.009  \\
\\
\end{tabular}
\end{center}
\caption{\label{tab:6.1} \Exsix\ ($2D$ diffusion equation), 
testing the time order for "SLDG-RK1", "SLDG-RK2" and "SLDG-RK3" diffusion schemes,
with fixed spatial order $k=4$ and $M=200$ intervals.
}
\end{table}
}\fi

\medskip


\noindent{\bf \Exten\ (2D diffusion with non-constant coefficients)}
We consider the following two-dimensional diffusion equation:
\be\label{eq:2dvar}
  & & u_t - \frac{1}{2}  Tr(\ms\ms^T D^2u) = f(t,x), \quad  x\in \mO,\ t\in(0,T), \\
  & & u(0,x,y)=u_0(x,y), \quad (x,y)\in \mO 
\ee
set on $\mO=(-\pi,\pi)^2$ with periodic boundary conditions, $T=1.0$.
The diffusion matrix $A=\ms\ms^T$ is defined by  
$$
  \ms(x,y):=\MAT{cc}{cos(x) & cos(2x)\\ 0 & sin(y)}.
$$
In this test we have chosen $u(t,x,y):=cos(t)sin(2x)sin(x+y)$ and the source term $f(t,x)$ such that \eqref{eq:2dvar} holds.
(The initial datum is therefore $u_0(x,y)=u(0,x,y)$).

\MODIF{
The scheme is defined here by using either 
\begin{itemize}
\item
the weak Euler scheme for the diffusion part, combined with Trotter's splitting (and with $Q_1$ polynomials) 
and a first-order correction for the source tem (as in \eqref{eq:whx-approx}).
\item
the weak Platen scheme for the diffusion part, combined with Strang's splitting (and with $Q_2$ polynomials)
and a second-order correction for the source term \eqref{eq:whx-approx},
as explained in Section~\ref{sec:4.2}.
\end{itemize}
}



The results for $L^2$ errors are  given in Table~\ref{tab:2Dvar}, where we consider variable time steps and mesh steps $\dt\sim \dx$.
(see Section~\ref{sec:4.1}).
The schemes are numerically roughly of the expected orders $1$ and $2$. 

As mentioned in Remark~\ref{rem:vanish-sigma}, 
there is no need to assume strict positivity of the diffusion matrix in this approach.

\if{
\begin{table}[!hbtp]
\begin{center}
\begin{tabular}{|cc|cc|cc|cc|c|}
\hline
   &  & \multicolumn{2}{|c|}{$L^1$ error} & \multicolumn{2}{|c|}{$L^2$ error} & \multicolumn{2}{|c|}{$L^\infty$ error} & \\
\hline
  $M$ & $N$    &  error   &order & error    & order & error    & order & cpu(s)\\ 
\hline
     3 &     2  & 1.23E+01 &  -     & 2.47E+00 &  -     & 9.82E-01 &  -     &    0.01 \\
     6 &     4  & 2.29E+00 &  2.43  & 4.52E-01 &   2.45 & 1.65E-01 &  2.57  &    0.03 \\
    12 &     8  & 3.60E-01 &  2.67  & 7.60E-02 &   2.57 & 3.03E-02 &  2.45  &    0.09 \\
    24 &    16  & 7.24E-02 &  2.32  & 1.55E-02 &   2.29 & 5.96E-03 &  2.35  &    0.39 \\
    48 &    32  & 1.69E-02 &  2.10  & 3.54E-03 &   2.14 & 1.19E-03 &  2.32  &    2.08 \\
    96 &    64  & 4.61E-03 &  1.87  & 9.52E-04 &   1.89 & 3.25E-04 &  1.87  &   12.28 \\
\hline
\end{tabular}
\end{center}
\caption{\label{tab:2Dvar} \textit{\Exten} ($2D$ diffusion equation with variable coefficients), error table with $\dt\sim \dx$.
}
\end{table}
}\fi

\begin{table}[!hbtp]
\begin{center}
\MODIF{
\begin{tabular}{|cc|ccc|ccc|}
\hline
   \multicolumn{2}{|c|}{$L^2$ error} & \multicolumn{3}{|c|}{Euler/Trotter (with $Q_1$)} & \multicolumn{3}{|c|}{Platen/Strang (with $Q_2$)} \\
\hline
   $M_1=M_2$ & $N$    &  error   &  order &  cpu(s) &  error   & order  &  cpu(s)\\
\hline
     5 &    10  & 1.50E+00 &  -     & 0.01     & 2.96E-01 &  -     &  0.020 \\
    10 &    20  & 4.98E-01 &   1.59 & 0.02     & 3.14E-02 &   3.24 &  0.088 \\
    20 &    40  & 9.63E-02 &   2.37 & 0.11     & 3.40E-03 &   3.21 &  0.432 \\
    40 &    80  & 2.87E-02 &   1.75 & 0.74     & 7.10E-04 &   2.26 &  2.564 \\
    80 &   160  & 1.07E-02 &   1.43 & 5.44     & 1.66E-04 &   2.09 & 16.621 \\
\hline
\end{tabular}
}
\end{center}
\caption{\label{tab:2Dvar} \textit{\Exten} ($2D$ diffusion equation with variable coefficients)
}
\end{table}

\medskip

%
%


\appendix


\section{Instability of the direct scheme}\label{app:unstabilities}

Here we consider the "direct scheme", which defines  naively at each time iteration a new piecewise polynomial $u^{n+1}\in V_k$ such that, 
$$
   u^{n+1,i}_\ma := u^n(x^i_\ma-b\dt), \quad \mbox{for all Gauss points $x^i_\ma$.}
$$

In Figure~\ref{fig:1}, we consider again $v_t + v_x=0$
with periodic boundary conditions on $(0,1)$, and with the initial data $v_0(x)=\sin(2\pi x)$.
We have depicted two graphs with different choices of the parameter~$N$. 
In each graph we plotted the result of the direct scheme (green line) and of
the SLDG scheme (red line) at time $T=1$, with piecewise $P_1$ elements ($k=1$) and fixed spatial mesh using $M=46$ mesh steps.
In the left graph, $N=80$ time steps and both curves are confounded; in the right graph, $N=320$, and the direct scheme becomes unstable.
(We have found that the error behaves as $c^N \dx^{k+1}$ where $c>1$, when using $P_k$ elements.)


\begin{figure}[!hbtp]\label{fig:stab1}
\centering
\subfloat[$N=80$] {\includegraphics[scale=0.30]{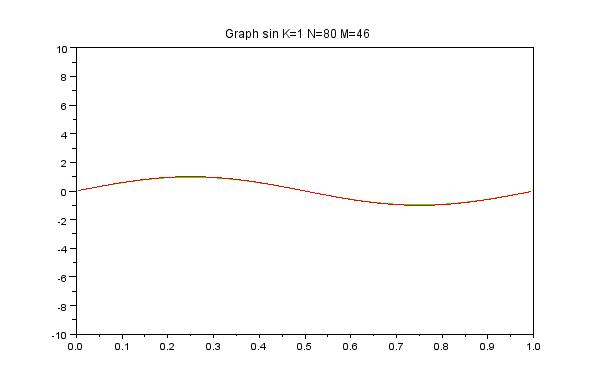}}
\subfloat[$N=320$]{\includegraphics[scale=0.30]{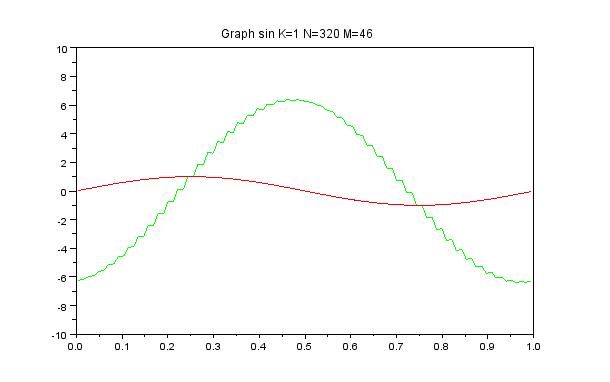}}
\caption{\label{fig:1} Results for $N=80$ (left) and $N=320$ (right), using $P_1$ elements with $M=46$ in both cases.
Instability appears on the right figure.}
\end{figure}

\medskip

\FULL{
We show also in Table~\ref{tab:unstable} 
some typical numerical results with the unstable schemes that can still appear to be good if the number
of time steps is not too large.

\begin{small}
\begin{table}[!hbtp]
\begin{center}
\begin{tabular}{|cc|cc|cc|cc|cc|}
\hline
\multicolumn{2}{|c|}{$L^2$ error}  
   & \multicolumn{2}{|c|}{k=1} & \multicolumn{2}{|c|}{k=2} & \multicolumn{2}{|c|}{k=3} & \multicolumn{2}{|c|}{k=4} \\
\hline
 Mesh $M$ & $N$ &  error   &order & error    & order & error    & order & error    & order \\
\hline
  23 & 10  & 8.17E-03 &  -   & 4.84E-05 &  -   & 2.27E-06 &    -  & 8.44E-09 &  -    \\
  46 & 20  & 4.16E-03 & 0.97 & 9.85E-06 & 2.29 & 2.54E-07 &  3.16 & 4.50E-10 & 4.23  \\
  92 & 40  & 2.11E-03 & 0.98 & 2.30E-06 & 2.09 & 4.04E-08 &  2.65 & 2.70E-11 & 4.05  \\
 184 & 80  & 1.06E-03 & 0.98 & 5.66E-07 & 2.02 & 1.54E-07 & -1.93 & 1.68E-12 & 4.00  \\
 368 & 160 & 5.33E-04 & 0.99 & 1.41E-07 & 2.00 & 7.44E+02 & -32.1 & 1.07E-13 & 3.97  \\
 736 & 320 & 2.67E-04 & 0.99 & 3.52E-08 & 2.00 & 4.30E+22 & -65.6 & 1.37E-14 & 2.97  \\
\hline
\end{tabular}
\end{center}
\caption{\label{tab:unstable}  Results for the Advection PDE, unstable scheme, CFL=2.3, $u_0(x)=sin(2\pi x)$, $T=1$.}
\end{table}
\end{small}

}

\if{
\begin{table}[!hbtp]
\begin{center}
\begin{tabular}{|c|c||c|c||c|c||c|c|}
\hline
\multicolumn{8}{|c|}{k=1}\\
\hline
 M &  N  & $L^\infty$ Error &  order & $L^1$ Error & order & $L^2$ Error & order\\
\hline
23 & 10 & 1.318E-02 &  -  & 7.277E-03 &  -  & 8.175E-03 &  -   \\
46 & 20 & 6.317E-03 & 1.061 & 3.739E-03 & 0.9608 & 4.165E-03 & 0.9730  \\
92 & 40 & 3.095E-03 & 1.029 & 1.898E-03 & 0.9778 & 2.110E-03 & 0.9809  \\
184 & 80 & 1.531E-03 & 1.015 & 9.568E-04 & 0.9885 & 1.063E-03 & 0.9892 \\  
368 & 160 & 7.616E-04 & 1.008 & 4.804E-04 & 0.9941 & 5.336E-04 & 0.9943 \\ 
736 & 320 & 3.798E-04 & 1.004 & 2.407E-04 & 0.9970 & 2.673E-04 & 0.9971 \\
\hline
\hline
\multicolumn{8}{|c|}{k=2}\\
\hline
 M &  N  & $L^\infty$ Error &  order & $L^1$ Error & order & $L^2$ Error & order\\
\hline
23 & 10 & 9.191E-05 &  -  & 4.022E-05 &  -  & 4.848E-05 &  -   \\
46 & 20 & 1.765E-05 & 2.381 & 8.433E-06 & 2.254 & 9.851E-06 & 2.299  \\
92 & 40 & 3.778E-06 & 2.224 & 2.042E-06 & 2.046 & 2.302E-06 & 2.097  \\
184 & 80 & 8.696E-07 & 2.119 & 5.077E-07 & 2.008 & 5.661E-07 & 2.024  \\
368 & 160 & 2.083E-07 & 2.061 & 1.269E-07 & 2.001 & 1.411E-07 & 2.005  \\
736 & 320 & 5.097E-08 & 2.031 & 3.173E-08 & 1.999 & 3.525E-08 & 2.001  \\
\hline
\hline
\multicolumn{8}{|c|}{k=3}\\
\hline
 M &  N  & $L^\infty$ Error &  order & $L^1$ Error & order & $L^2$ Error & order\\
\hline
23 & 10 & 4.720E-06 &  -  & 1.856E-06 &  -  & 2.278E-06 &  -   \\
46 & 20 & 5.548E-07 & 3.089 & 1.995E-07 & 3.217 & 2.548E-07 & 3.161  \\
92 & 40 & 9.127E-08 & 2.604 & 3.041E-08 & 2.714 & 4.043E-08 & 2.656  \\
184 & 80 & 5.548E-07  &- 2.604 & 1.212E-07 & - 1.995 & 1.549E-07 & - 1.938  \\
368 & 160 & 3.063E+03 & - 32.36 & 5.481E+02 & - 32.07 & 7.442E+02 & - 32.16  \\
736 & 320 & 1.588E+23  &- 65.49 & 3.354E+22 & - 65.73 & 4.307E+22 & - 65.65  \\
\hline
\hline
\multicolumn{8}{|c|}{k=4}\\
\hline
 M &  N  & $L^\infty$ Error &  order & $L^1$ Error & order & $L^2$ Error & order\\
\hline
 23 & 10  & 1.771E-08 &  -    & 6.670E-09 &  -    & 8.445E-09 &  -     \\
 46 & 20  & 9.093E-10 & 4.283 & 3.755E-10 & 4.151 & 4.500E-10 & 4.230  \\
 92 & 40  & 4.974E-11 & 4.192 & 2.368E-11 & 3.987 & 2.708E-11 & 4.055  \\
184 & 80  & 2.801E-12 & 4.150 & 1.494E-12 & 3.986 & 1.688E-12 & 4.004  \\
368 & 160 & 1.970E-13 & 3.830 & 9.443E-14 & 3.984 & 1.075E-13 & 3.973  \\
736 & 320 & 3.764E-14 & 2.388 & 1.117E-14 & 3.079 & 1.371E-14 & 2.970  \\
\hline
\hline
\multicolumn{8}{|c|}{k=5}\\
\hline
 M &  N  & $L^\infty$ Error &  order & $L^1$ Error & order & $L^2$ Error & order\\
\hline
23 & 10 & 8.068E-10 &  -  & 4.473E-10 &  -  & 5.053E-10 &  -   \\
46 & 20 & 2.376E-11 & 5.086 & 1.414E-11 & 4.984 & 1.577E-11 & 5.002  \\
92 & 40 & 7.180E-13 & 5.048 & 4.415E-13 & 5.001 & 4.909E-13 & 5.005  \\
184 & 80 & 1.621E-14 & 5.469 & 8.772E-15 & 5.654 & 9.778E-15 & 5.650  \\
368 & 160 & 2.909E-14  & - 0.8436 & 9.940E-15  & - 0.1803 & 1.143E-14  & - 0.2253  \\
736 & 320 & 5.649E-13  & - 4.279 & 1.002E-13  & - 3.334 & 1.336E-13  & - 3.547  \\
\hline
\end{tabular}
\end{center}
\caption{\label{tab:1} Results for the Advection PDE, unstable scheme, CFL$=2.3$, $u_0(x)=sin(2\pi x - \pi)$, $T=1$}
\end{table}
}\fi

\FULL{
\section{Proof of Fa{\`a} di Bruno's formula~\eqref{eq:FaadiBruno}}\label{app:FaadiBruno}
Writing $y(x+h)=y(x) + \eps(h) + o(h^{q})$, where 
\beno
  \eps(h) := h y'(x) + \cdots + \frac{h^{q}}{q!}y^{(q)}(x),
\eeno
we have for any $u\in V_k$ (using the fact that $q\geq k+1$) 
\be
   u(y(x+h)) =  u(y(x)) + \sum_{p=1}^{k} \frac{\eps(h)^p}{p!} u^{(p)}(y(x)) + o(h^{q}). \label{eq:dev1}
\ee
We look for the contributions in $h^{q}$. 
Let $a_j(h) := \frac{h^j}{j!} y^{(j)}$, so that $\eps(h)=a_1(h)+\dots+a_q(h)$.
By the generalized Newton's formula we have
\be \frac{\eps(h)^p}{p!}  
  & = & \sum_{\ma_1+\cdots+\ma_q=p,\ \ma_i\geq 0} \frac{1}{\ma_1! \cdots \ma_q!} a_1(h)^{\ma_1} \cdots a_q(h)^{\ma_q}  \\
  & = & \sum_{\ma_1+\cdots+\ma_q=p,\ \ma_i\geq 0} 
   \frac{ (y^{(1)}/1!)^{\ma_1} \cdots (y^{(q)}/q!)^{\ma_q}}{\ma_1! \cdots \ma_q!} 
    h^{\ma_1+ 2\ma_2 + \cdots + q\ma_q}. 
\ee
The $q$-th derivative of $u(y)$ is therefore obtained by summing all the contributions
in $h^q$ that appear in \eqref{eq:dev1}, and we obtain Fa{\`a} di Bruno's formula:
$$ \frac{1}{q!}\frac{d^q}{dx^q} (u(y(x)))= 
  \sum_{p=1}^{k} 
  u^{(p)}(y(x)) \bigg(
  \sum_{(\ma_j), \ \sum_j\ma_j=p,\ \sum_{j}j\ma_j=q} 
  \frac{ (y^{(1)}/1!)^{\ma_1} \cdots (y^{(q)}/q!)^{\ma_q}}{\ma_1! \cdots \ma_q!} 
  \bigg).
$$
}




\bibliographystyle{abbrv}
\def\cprime{$'$}


\end{document}